\documentclass{amsart}
\usepackage[dvips]{color}
\usepackage{graphicx}
\usepackage{epsfig}
\usepackage{tikz}
\usepackage{enumerate}
\usepackage{color}

\usepackage{latexsym}
\usepackage{amssymb}
\usepackage{amsmath}
\usepackage{amsthm}
\usepackage{amscd}

\usepackage[utf8]{inputenc}
\usepackage{xcolor}
\usepackage{tikz-cd}
\usepackage{siunitx}
\usetikzlibrary{matrix, positioning, calc}
\usepackage{hyperref}
\usepackage[shortlabels]{enumitem}

\title[Ergodic theory of real Rel]{On the ergodic theory of the real Rel foliation} 
\author{Jon Chaika}
\address{University of Utah {\tt chaika@math.utah.edu }}

\author{Barak Weiss}
\address{Tel Aviv University 
{\tt barakw@tauex.tau.ac.il}}

\font\sn = cmssi8 scaled \magstep0

\long\def\combarak#1{\ifdraft{\color{red}\sn #1 }\else\ignorespaces\fi}

\numberwithin{equation}{section}

\newcommand{\C}{\mathbb{C}}

\newcommand{\q}{\mathbf{q}}
\newcommand{\Rel}{\mathrm{Rel}}

\newcommand\ec{/\!\!/}

\newcommand\dev{\mathrm{dev}}
\newcommand\hol{\mathrm{hol}}
\newif\ifdraft\drafttrue
\draftfalse
\newcommand\name[1]{\label{#1}{\ifdraft{\sn [#1]}\else\ignorespaces\fi}}

\newcommand\eq[2]{{\ifdraft{\ \tt [#1]}\else\ignorespaces\fi}\begin{equation}\label{#1}{#2}\end{equation}}
\newcommand {\equ}[1]{\eqref{#1}}

\newcommand{\HH}{{\mathcal{H}}}

\newcommand{\Q}{{\mathbb {Q}}}

\newcommand{\R}{{\mathbb{R}}}

\newcommand{\Res}{{\mathrm{Res}}}

\newcommand{\Prob}{{\mathrm{Prob}}}

\newcommand{\N}{{\mathbb{N}}}

\newcommand{\GL}{\operatorname{GL}}

\newcommand{\Mod}{\operatorname{Mod}}

\newcommand{\SL}{\operatorname{SL}}

\newcommand {\ignore}[1]  {}

\newcommand{\spa}{{\rm span}}

\newcommand{\dist}{{\rm dist}}

\newcommand{\diam}{{\rm diam}}

\newcommand{\bq}{{\mathbf{q}}}

\newcommand{\LL}{{\mathcal L}}
\newcommand{\cL}{{\LL}}

\newcommand{\df}{{\, \stackrel{\mathrm{def}}{=}\, }}

\newcommand{\hs}{\kern 0.8pt}

\newcommand{\supp}{{\rm supp}}

\newcommand{\rel}{{\mathrm{Rel}}}

\newcommand{\sm}{\smallsetminus}

\newcommand{\vre}{\varepsilon}


\newtheorem{thm}{Theorem}[section]

\newtheorem{lem}[thm]{Lemma}

\newtheorem{prop}[thm]{Proposition}

\newtheorem{cor}[thm]{Corollary}

\newtheorem{remark}[thm]{Remark}

\newcommand{\cH}{\mathcal{H}}

\sloppy
\setlength{\parskip}{1 mm}
\begin{document}
\maketitle

\begin{abstract}
Let $\HH$ be a stratum of translation surfaces with at least two
singularities, let $m_{\HH}$ denote the Masur-Veech measure on $\HH$,
and let $Z_0$ be a flow on $(\HH, m_{\HH})$ obtained by integrating
a Rel vector field. We prove that $Z_0$ is mixing of all orders, and
in particular is ergodic. We also characterize the ergodicity of flows
defined by Rel vector fields, for more general spaces $(\LL, m_{\LL})$,
where $\LL \subset \HH$ is an orbit-closure for the action of $G =
\SL_2(\R)$ (i.e., an affine invariant subvariety) and $m_{\LL}$ is the
natural measure. These results are conditional on a forthcoming measure
classification result of Brown, Eskin, Filip and Rodriguez-Hertz. We
also prove that the entropy of $Z_0$ with respect to any of the
measures $m_{\cL}$ is zero. 
\end{abstract}

\section{Introduction}
Let $\HH$ be a stratum of area-one translation surfaces and let $G \df
\SL_2(\R)$. There is a $G$-invariant finite measure $m_{\HH}$ on $\HH$
known as the {\em Masur-Veech measure}, and the dynamics of the
$G$-action on $(\HH, m_{\HH})$ have been intensively 
studied in recent years and are intimately connected to many problems
in geometry and ergodic theory, see e.g. \cite{MT, zorich
  survey}. Suppose that surfaces in $\HH$ have $k$ singularities, 
where $k\geq 2$. Then there is a $k-1$-dimensional foliation of $\HH$,
known as the {\em real Rel foliation}. A precise definition of the
foliation and some of its properties will be given below in \S \ref{subsec:
  rel foliation}. Loosely
speaking, two surfaces are in the same real Rel leaf if one can be
obtained from the other by a surgery in which singular points are
moved with respect to each other in the horizontal direction, without
otherwise changing the geometry of the surface. A natural question,
which we address here, is the ergodic properties of this foliation.

As we review in \S \ref{subsec: rel foliation}, by labeling the
singularities and removing a set of
leaves of measure zero, we 
can think of the real Rel leaves as being the orbits of an action of a
group $Z$ on $\HH$, where $Z \cong \R^{k-1}$, and the restriction of this
action to any one-dimensional subgroup of $Z$ defines a flow. Our first main result is
the following.

  \begin{thm}\label{thm: main}
Let $\HH$ be a connected component of a stratum $\HH(a_1, \ldots,
a_k)$ with all $a_i>0$ (i.e., no marked points). Let $m_\HH$ be
the Masur-Veech measure on $\HH$, let $Z \cong \R^{k-1}$ be the
corresponding action given by translation along the leaves of the real Rel
foliation, and let $Z_0 \subset Z$ be any 
one-dimensional connected subgroup of $Z$. Then the $Z_0$-flow on
$(\HH, m_{\HH})$ is 
mixing of all orders (and in 
particular, ergodic). 
\end{thm}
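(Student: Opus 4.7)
The plan is to deduce mixing of all orders from a measure-rigidity input for the real Rel action, namely the announced measure classification of Brown-Eskin-Filip-Rodriguez-Hertz (BEFRH), together with the renormalization relation
\[
g_t \, Z_0^s \, g_{-t} = Z_0^{e^t s},
\]
where $g_t = \mathrm{diag}(e^t, e^{-t})$ is the Teichmüller geodesic flow.

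As a first step, I would prove ergodicity of the full group $Z \cong \R^{k-1}$ on $(\HH, m_{\HH})$. By BEFRH, every $Z$-ergodic component of $m_{\HH}$ is an affine, in particular $G$-invariant, measure. Since the $G$-action on $(\HH, m_{\HH})$ is ergodic (Masur-Veech), the ergodic decomposition is trivial, so $Z$ acts ergodically. Ergodicity of the one-parameter subgroup $Z_0 \subset Z$ then follows from a Mautner-type argument: $g_t$ is mixing on $(\HH, m_{\HH})$ (Veech) and renormalizes $Z_0$, so every $Z_0$-invariant $L^2$-function is $g_t$-invariant, hence constant.

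For mixing of order two, let $\mu$ be a weak-$*$ subsequential limit of $(\mathrm{id} \times Z_0^{s_n})_* m_{\HH}$ as $s_n \to \infty$; this is a diagonal-$Z_0$-invariant probability measure on $\HH \times \HH$ with each marginal equal to $m_{\HH}$, i.e.\ a self-joining. Pushing by the diagonal $(g_t, g_t)$ preserves both the joining property (by the renormalization, diagonal $Z_0$-invariance is preserved) and the marginals. Extracting further weak-$*$ limits as $t \to \pm\infty$, the self-similarity manufactures additional invariance for $\mu$; a joinings-version of BEFRH (combined with the prescribed marginals) then forces $\mu = m_{\HH} \otimes m_{\HH}$, yielding two-point mixing. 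The same scheme applied to weak-$*$ limits of $(\mathrm{id}, Z_0^{s^{(1)}_n}, \ldots, Z_0^{s^{(n-1)}_n})_* m_{\HH}$ on $\HH^n$, with gaps $s^{(i+1)}_n - s^{(i)}_n \to \infty$, extends the argument to mixing of all orders.

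The main obstacle is this joinings step. The product $\HH^n$ is not itself a stratum, so BEFRH does not apply to it directly; one must either embed the relevant self-joinings in a larger affine-invariant variety where a rigidity statement is available, or promote the renormalization-induced extra invariance to invariance under a group large enough that the structure of $\mu$ is forced. Controlling the ``mixed'' invariance across coordinates, and inductively using lower-order mixing to split off factors, is where the argument is most delicate.
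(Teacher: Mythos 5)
Your overall architecture—renormalization by $g_t$ plus a joinings-version of the Brown--Eskin--Filip--Rodriguez-Hertz rigidity—is the right family of ideas, and your identification of the product rigidity theorem as the key new input matches the paper. But several individual steps are wrong or unresolved, and the paper handles each of them quite differently.

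\textbf{Gap 1: BEFRH does not apply to $Z$-ergodic components.} You write that ``by BEFRH, every $Z$-ergodic component of $m_{\HH}$ is an affine, in particular $G$-invariant, measure.'' This is a misapplication: the measure classification theorems (Eskin--Mirzakhani, Eskin--Mirzakhani--Mohammadi, and their BEFRH extension to products) classify \emph{$P$-invariant} ergodic probability measures, not $Z$-invariant ones. The ergodic components of $m_{\HH}$ under the $Z$-action are $Z$-invariant but need not be $P$-invariant, so no rigidity result is available for them. The paper explicitly avoids this trap (see Remark~\ref{rem:prime}): instead of trying to classify $Z$- or $Z_0$-ergodic components, it forms the \emph{relatively independent self-joining} $\theta$ over the $Z_0$-ergodic decomposition. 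Because $g_t$ normalizes $Z_0$ (acting by dilation), $\theta$ \emph{is} $P$-invariant by functoriality of the ergodic decomposition (Proposition~\ref{prop: ergodic components}(v)), and this is the object to which the product rigidity theorem can legitimately be applied.

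\textbf{Gap 2: the Mautner argument is in the wrong direction.} You assert that every $Z_0$-invariant $L^2$-function is $g_t$-invariant. This is backwards. Since $g_{-t} z_0 g_t = e^{-t} z_0 \to 0$ as $t \to +\infty$, the Mautner phenomenon yields: every $g_t$-invariant function is $Z_0$-invariant. The reverse implication is false in general. The paper uses Mautner only in the correct direction (Proposition~\ref{prop: Mautner}): given that $\theta$ is invariant under $P \ltimes (Z \times Z)$, any $g_t$-invariant function in $L^2(\theta)$ is $Z \times Z$-invariant; combined with ergodicity of the fiber measures this forces $P$-ergodicity of $\theta$.

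\textbf{Gap 3: higher-order mixing.} You candidly note that the $n$-fold product $\HH^n$ is outside the scope of BEFRH and leave this unresolved. The paper's solution is not to try to classify measures on $\HH^n$ at all. Instead, once ergodicity of $Z_0$ is established, Proposition~\ref{prop: adapting Mozes}—an adaptation of Mozes' renormalization argument for groups $F = \{g_t\} \ltimes Z_0$—shows directly that ergodicity of $Z_0$ implies mixing of all orders, using only the scaling relation $g_t z g_{-t} = e^{\lambda t} z$ and the $F$-invariance of $m_{\HH}$. No further measure classification is needed once ergodicity is in hand.

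\textbf{Minor issues.} Even granting the two-fold joinings approach, extracting weak-$*$ limits of $(\mathrm{id} \times \Rel_{s_n z_0})_* m_{\HH}$ requires controlling escape of mass, since $\HH$ is not compact; and the ``self-similarity manufactures additional invariance'' step is left vague, whereas the paper obtains $P$-invariance of $\theta$ cleanly and formally. Finally, even with full ergodicity of $\theta$ plus the product rigidity theorem in hand, one still needs to rule out the alternative in which $\theta$ is supported on a proper affine invariant submanifold; the paper does this via Wright's classification of flat subbundles plus the explicit cylinder-circumference criterion (Propositions~\ref{prop: criterion} and~\ref{prop:verifying crit}), a step that does not appear in your sketch.
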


The definition of mixing of all orders is given in \S \ref{subsec:
  mixing of all orders}. For purposes of this introduction it is
enough to note that it implies ergodicity of any nontrivial element. 
Note that when $\HH$ has marked points, there will be subgroups $Z_0$
which only move the marked points on surfaces without otherwise
changing the geometry, and the conclusion of Theorem \ref{thm: main}
will not hold. This is the only obstruction to generalizing our
results to strata with marked points, see Theorem \ref{thm: main 1}.

The proof of Theorem \ref{thm: main}, as well as most of the other
  results of this paper, relies crucially on measure-rigidity results
  of Eskin, Mirzakhani and Mohammadi
  \cite{EM, EMM}, and further forthcoming work extending these
  results, which we will describe in \S \ref{sec: magic wand upgraded}. 

Theorem \ref{thm: main} improves on the results of several authors. In
those results, ergodicity for the {\em full Rel foliation} was studied. The
full Rel foliation (also referred to as the `kernel foliation',
`isoperiodic foliation', or
`absolute period foliation') will also be defined in \S \ref{subsec:
  rel foliation}. Its leaves are of dimension $2(k-1)$, that is, twice
the dimension of the real Rel leaves. Loosely speaking, two surfaces
are in the same leaf for this foliation if one can be obtained from
the other by moving the singularities (without otherwise affecting the
geometry of the surface). That is, we relax the hypothesis that points
can only be moved horizontally. The first ergodicity results for the
full Rel foliation were obtained by McMullen \cite{McM}, who proved ergodicity
in the two strata $\HH(1,1)$ and
$\HH(1,1,1,1)$. Subsequently, Calsamiglia, Deroin and Francaviglia
\cite{CDF} proved ergodicity in all principal strata, and Hamenst\"adt
\cite{Ham} reproved their result by a simpler argument.
Recently, Winsor \cite{Winsor} 
proved ergodicity for most of the additional strata, and in
\cite{Winsor 2}, showed that  there are dense orbits for the
$Z_0$-flow, for any $Z_0$ as in Theorem \ref{thm:
  main}. Note that ergodicity
for a foliation is implied by ergodicity for any of its subfoliations,
and that ergodicity implies the existence of dense leaves,
and thus Theorem \ref{thm: main} generalizes all of these
results. Also note that full Rel is
a foliation which is not given by a group action, and the notions of 
mixing and multiple mixing do not make sense in this case.

The papers \cite{McM, CDF} go beyond
ergodicity and obtain classifications of full Rel closed leaves and
leaf-closures in their respective 
settings. We suspect that there is not a reasonable classification of
real Rel leaf-closures, indeed it is already known (see \cite{HW}) that there are real
Rel trajectories that leave every compact set never to return. 

The strata $\HH$ support other interesting measures for which similar
questions could be asked. Namely, by work of Eskin, Mirzakhani and
Mohammadi \cite{EM, EMM}, for any $q \in \HH$, the
orbit-closure $\cL \df \overline{Gq}$ is the support of  a unique
smooth $G$-invariant measure which we denote by $m_{\cL}$. Let
$Z_{\cL}$ be the subgroup of $Z$ leaving $\cL$ invariant. Then
$Z_{\cL}$ also preserves $m_{\cL}$ and for many choices of $\cL$, we
have $\dim Z_\cL>0$. In these cases, for any closed connected $Z_1 \subset Z_{\cL}$,
there is a {\em complexification} 
$\mathfrak{R}_{1}$, which gives a foliation of $\cL$ whose leaves
$\mathfrak{R}_{1}(q)$ 
have dimension $2 \dim Z_1$ (see \S \ref{subsec: rel 
  foliation}). The leaves $\mathfrak{R}_1(q)$ have a natural
translation structure, and this induces a natural locally finite
translation-invariant 
measure on each leaf. With this terminology we can now state the main
result of this paper:



\begin{thm} \label{thm: general} Let $\cL$ be a $G$-orbit closure, and let
  $m_{\mathcal{L}}, \, Z_{\cL}, \mathfrak{R}_{\cL}$ be as above, where
  $\dim Z_{\cL}>0$. Let $z_0$ be a nontrivial element of $Z_{\cL}$ and
  let $Z_0 = \spa_{\R} (z_0).$ Then either  
\begin{enumerate}
\item \label{item: one}
The action of  $Z_0$ on  $\left (\cL, m_{\mathcal{L}} \right)$ is mixing of all orders (and in
  particular, $z_0$ acts ergodically); or 
\item \label{item: two}
  there is an intermediate closed connected subgroup $Z_1$ so that $Z_0\subset
  Z_1\subset Z_{\cL}$, and the 
  complexification $\mathfrak{R}_1$ of $Z_1$ satisfies 
\begin{itemize}
\item for every $q \in \cL$, the leaf $\mathfrak{R}_1(q)$ is closed, 
  and  
\item for $m_{\mathcal{L}}$-a.e. $q$, $\mathfrak{R}_1(q)$ is of finite
  volume with respect to its translation-invariant 
  measure,  and
  $\overline{Z_0q}=\mathfrak{R}_1(q)$. 
\end{itemize}
\end{enumerate}
\end{thm}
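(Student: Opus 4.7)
\emph{Plan.} The strategy is to combine the forthcoming measure-classification result of Brown--Eskin--Filip--Rodriguez-Hertz (outlined in \S\ref{sec: magic wand upgraded}) with the renormalization relation $a_t z a_{-t} = e^t z$ for $z \in Z_\cL$, where $a_t \in G$ denotes the Teichm\"uller geodesic flow, in order to (i) establish the dichotomy and (ii) upgrade the ergodicity branch all the way to mixing of all orders.

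\emph{Step 1 (Dichotomy).} I would first decompose $m_\cL$ into $Z_0$-ergodic components and analyze a generic component $\nu$. The classification should identify each $Z_0$-invariant ergodic measure on $\cL$ of the relevant ``affine'' type as the normalized translation-invariant probability measure on a single closed complexified Rel leaf $\mathfrak{R}_1(q)$ of finite volume, where $Z_0 \subseteq Z_1 \subseteq Z_\cL$ is a closed connected subgroup determined by $\nu$. A standard saturation/measurability argument, using $G$-ergodicity of $m_\cL$ (\cite{EM, EMM}), then lets one pick the same $Z_1$ for $m_\cL$-almost every $q$. Either the generic $Z_1$ equals $Z_\cL$ and $\mathfrak{R}_{\cL}(q)$ fills $\cL$ up to a null set, placing us in case (1), or we are in case (2); in the latter case the identification $\overline{Z_0 q} = \mathfrak{R}_1(q)$ follows from the ergodicity of $\nu$ on the leaf.

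\emph{Step 2 (Mixing).} In case (1), I would deduce mixing from a joinings argument. For each $s$, consider the graph self-joining $\tau_s := (\mathrm{id} \times z_s)_* m_\cL$ on $\cL \times \cL$: it is invariant under the diagonal $Z_0$-action and, by $G$-invariance of $m_\cL$, under the diagonal $G$-action, with both marginals equal to $m_\cL$. Any weak-$*$ accumulation point $\tau$ of $\tau_s$ as $s\to\infty$ is again such a diagonal self-joining. Applying the measure classification on $\cL \times \cL$ for the diagonal $G$-action, combined with the diagonal renormalization $(a_t,a_t)$ scaling the diagonal $Z_0$ parameter, I would argue that the only self-joining compatible with case (1) is $\tau = m_\cL \otimes m_\cL$, which is equivalent to $2$-mixing. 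To pass to mixing of all orders, I would iterate this joinings scheme on $\cL^n$ in the style of Host: a failure of $n$-mixing would produce a nontrivial ergodic $n$-fold diagonal self-joining, and the measure classification on the product again forces it to be the product measure.

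\emph{Main obstacle.} The central difficulty is applying the classification on the product spaces $\cL^n$, where only a diagonal copy of $G$ acts: the statement of Brown--Eskin--Filip--Rodriguez-Hertz lives naturally on a single $\cL$, and verifying that its conclusions on these product spaces rule out all ``off-diagonal'' complexified Rel-type joinings (other than products) is delicate, and more so at higher orders, where the inductive management of the joinings is likely to be the most technical point. A secondary but nontrivial issue is rigorously establishing the measurability and saturation assertions in Step 1, so that a single $Z_1$ works on a full $m_\cL$-measure set rather than only on a measurable family of $q$'s.
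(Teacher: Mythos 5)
Your proposal shares some high-level ingredients with the paper (joinings, the Brown--Eskin--Filip--Rodriguez-Hertz classification on a product, the renormalization $g_t z g_{-t} = e^t z$), but both steps as written contain genuine gaps, and the paper's argument is organized quite differently.

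\textbf{Gap in Step 1.} The BEFR classification (like Eskin--Mirzakhani) classifies $P$-invariant ergodic measures, not $Z_0$-invariant ones. A $Z_0$-ergodic component of $m_\cL$ has no reason to be $P$-invariant, so you cannot feed it into the classification to conclude it is a translation-invariant measure on a closed Rel leaf. This is exactly the obstruction that forces the paper's detour: instead of classifying $Z_0$-ergodic components, it forms the \emph{relatively independent self-joining} $\theta$ of $m_\cL$ over $\cL \ec Z_0$. That joining \emph{is} $P$-invariant and moreover $P$-ergodic, which is shown by a Mautner-phenomenon argument (Proposition \ref{prop: Mautner}--\ref{prop: ergodicity}) using $g_t z g_{-t} \to 0$. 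Only then is BEFR on $\cL\times\cL$ applicable. Once BEFR provides the affine structure on $\supp\theta$, the paper identifies the fibers $N_q$ of $\supp\theta$ as a flat sub-bundle of $T(\bar\cL)$, and here a second key input that your plan omits is essential: Wright's classification of flat sub-bundles (\cite[Thm.~5.1]{Wright field of definition}) forces $\mathfrak{N}_q \subset \mathfrak{R}_{\cL}$, identifies it as a fixed complex subspace $\mathfrak{R}_1$, and from there the closedness/finite volume/$\overline{Z_0 q} = \mathfrak{R}_1(q)$ assertions follow.

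\textbf{Gap in Step 2.} The graph joinings $\tau_s = (\mathrm{id}\times z_s)_*m_\cL$ are not diagonally $G$-invariant: since $g_t z_s g_{-t} = z_{e^t s}$, one has $(g_t,g_t)_*\tau_s = \tau_{e^t s} \ne \tau_s$; they are invariant only under the diagonal $U$- and $Z_0$-actions. So you cannot directly apply the $P$- (or $G$-) classification to an accumulation point of the $\tau_s$ without additional work, and the higher-order version on $\cL^n$ would require a classification theorem that BEFR does not provide (it is stated for pairs). The paper sidesteps this entirely: Proposition \ref{prop: adapting Mozes} shows, by adapting Mozes's Ad-properness argument to the two-dimensional group $F = \{g_t\}\ltimes Z_0$ together with \cite{Katok_Thouvenot} for countable Lebesgue spectrum, that ergodicity of the $Z_0$-flow already implies mixing of all orders, so case \eqref{item: one} requires no joinings on higher products at all. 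The only joining used is the single relatively independent self-joining on $\cL\times\cL$, whose sole purpose is to detect failure of ergodicity.

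In short: the correct pivot is not ``classify $Z_0$-ergodic components,'' but ``build a $P$-ergodic self-joining whose triviality is equivalent to $Z_0$-ergodicity, apply BEFR and Wright's flat-bundle theorem to constrain its support, and obtain all higher-order mixing for free from a Mozes-type lemma.''
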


Thus, in order to establish ergodicity of real Rel subfoliations on
$G$-orbit-closures, it is enough to rule out Case \eqref{item:
  two}. We will prove Proposition \ref{prop: criterion}, which provides
a simple way to achieve this, using cylinder circumferences of
surfaces in $\cL$. Theorems \ref{thm: main} and \ref{thm: main 1} are
deduced from Theorem \ref{thm: general} using Proposition \ref{prop:
  criterion}. 

The following statement is an immediate consequence of Theorem
\ref{thm: general}. 
\begin{cor}\label{cor: dense leaf enough}
Let $\cL$ be a $G$-orbit-closure, let $m_{\cL}, Z_{\cL}$ be as
above, and let $Z_1 \subset Z_{\cL}$ be one-dimensional. Suppose that the foliation
induced by the 
complexification $\mathfrak{R}_1$ has a dense leaf. Then the 
$Z_1$-flow on  $(\cL, 
m_{\cL})$ is mixing of all orders (and in particular, ergodic).
\end{cor}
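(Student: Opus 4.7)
The plan is to deduce the corollary from Theorem \ref{thm: general} by ruling out its second alternative using the dense-leaf hypothesis; the obstruction comes from the fact that (complexified) Rel preserves absolute periods while $G$ does not.

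Concretely, I apply Theorem \ref{thm: general} with $Z_0 \df Z_1$ (the given one-dimensional subgroup), and assume for contradiction that Case \eqref{item: two} holds. This yields a closed connected intermediate subgroup $Z_1'$ with $Z_0 \subset Z_1' \subset Z_{\cL}$ whose complexification $\mathfrak{R}_{1'}$ has every leaf closed in $\cL$. Since $Z_1 \subset Z_1'$, the foliation $\mathfrak{R}_1$ refines $\mathfrak{R}_{1'}$, so $\mathfrak{R}_1(q) \subset \mathfrak{R}_{1'}(q)$ for every $q$. Choosing $q_0$ witnessing the hypothesis that $\mathfrak{R}_1(q_0)$ is dense in $\cL$, and taking closures, I get
\[
\cL = \overline{\mathfrak{R}_1(q_0)} \subset \overline{\mathfrak{R}_{1'}(q_0)} = \mathfrak{R}_{1'}(q_0),
\]
so $\mathfrak{R}_{1'}(q_0) = \cL$. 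Since distinct leaves of a foliation are disjoint, in fact $\mathfrak{R}_{1'}(q) = \cL$ for \emph{every} $q \in \cL$; equivalently, $\mathfrak{R}_{1'}$ has $\cL$ as its unique leaf.

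The final step is to derive a contradiction. Leaves of the complexified Rel foliation lie in level sets of the absolute period map $p \colon \HH \to H^1(S;\C)$---this is the content of the name \emph{isoperiodic foliation} used in the discussion after Theorem \ref{thm: main}. So $p$ would be constant on $\cL$, equal to some $c \in H^1(S;\C)$. But $\cL$ is $G$-invariant and $p$ is $G$-equivariant for the natural linear action of $G$ on $H^1(S;\C)$; constancy would force $g\cdot c = c$ for all $g \in G$, hence $c = 0$. This is impossible because every $q \in \cL$ represents an area-one translation surface, so its cohomology class $p(q)$ is nonzero. This rules out Case \eqref{item: two} and forces Case \eqref{item: one}, yielding mixing of all orders.

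I expect the only point requiring careful verification---and hence the main potential obstacle---to be the inclusion $\mathfrak{R}_{1'}(q) \subset p^{-1}(p(q))$. This should follow immediately from the construction of the complexification in \S\ref{subsec: rel foliation}, but it is the one ingredient I would want to confirm before writing the proof up.
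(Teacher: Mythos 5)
Your proposal is correct and follows the route the authors intend when they say the corollary is an ``immediate consequence'' of Theorem \ref{thm: general}: applying the theorem with $Z_0=Z_1$, case \eqref{item: two} would produce a complexification $\mathfrak{R}_{1'}\supset\mathfrak{R}_1$ whose leaves are closed, so a dense $\mathfrak{R}_1$-leaf forces $\mathfrak{R}_{1'}(q_0)=\cL$, and that possibility must be excluded. The derivation $\cL=\overline{\mathfrak{R}_1(q_0)}\subset\mathfrak{R}_{1'}(q_0)$ and the observation that leaves partition $\cL$ are both fine.

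The one place I would tighten is exactly the spot you flagged. You phrase the contradiction in terms of an absolute period map $p\colon\HH\to H^1(S;\C)$, but this map is not globally well defined on $\HH$: it is the composition $\Res\circ\dev$, which lives on $\HH_{\mathrm m}$ and descends to $\HH$ only up to the $\Mod(S,\Sigma)$-monodromy action on $H^1(S;\C)$ (the paper only notes that monodromy acts trivially on $\ker(\Res)$, not on $H^1(S;\C)$). So ``$p$ is constant on $\cL$'' and ``$g\cdot c=c$'' are not literally meaningful statements on the quotient. The fix is to run the same idea locally or on $\HH_{\mathrm m}$: if $\mathfrak{R}_{1'}(q_0)=\cL$, then in period coordinates near $q_0$ the tangent space $W$ of $\cL$ would equal $\mathfrak{R}_{1'}\subset\ker(\Res)$, yet $W$ contains the tangent directions of the $G$-action (e.g.\ the $g_t$-direction), and these have nonzero image under $\Res$; contradiction. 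Equivalently, one can simply quote that the proof of Theorem \ref{thm: general} already establishes $\dim\mathfrak{R}_{1'}(q)=\dim N_q<\dim\cL$, so no such leaf can be all of $\cL$. Either repair makes your argument watertight; the underlying idea is the same one you are using, just localized so that no appeal to a global map on $\HH$ is needed.
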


The density of certain leaves of the full Rel foliation in
$G$-orbit-closures of rank one  was
obtained by Ygouf in \cite{Ygouf}. Using these results we obtain 
ergodicity of one-dimensional subgroups of the real Rel foliation in
many cases. For instance, using \cite[Thm. A \& Prop. 5.1]{Ygouf} we have:

\begin{cor}
The real Rel foliation is mixing of all orders (and in particular,
ergodic) in any eigenform locus in $\HH(1,1)$ with a non-square
discriminant. 
\end{cor}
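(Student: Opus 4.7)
The plan is a direct application of Corollary \ref{cor: dense leaf enough}. Fix an eigenform locus $\cL \subset \HH(1,1)$ of non-square discriminant. By McMullen's work on eigenform loci, $\cL$ is a proper rank-one affine invariant submanifold, hence a $G$-orbit closure to which Theorem \ref{thm: general} applies. Since $\HH(1,1)$ has $k=2$ singularities, the ambient real Rel group $Z$ is one-dimensional, so the real Rel foliation of $\HH$ has one-dimensional leaves.

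First I would verify that $Z_\cL = Z$, i.e.\ that real Rel preserves $\cL$. This is because any real Rel deformation fixes the absolute period map and does not alter the underlying Jacobian, so the eigenform condition (existence of a self-adjoint endomorphism of the Jacobian with $\omega$ in a prescribed eigenspace) is preserved; hence eigenform loci are Rel-invariant. Setting $Z_1 \df Z_\cL$, we obtain a one-dimensional closed subgroup whose complexification $\mathfrak{R}_1$ coincides with the restriction to $\cL$ of the full Rel foliation.

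By Corollary \ref{cor: dense leaf enough}, it now suffices to exhibit a single dense leaf of $\mathfrak{R}_1$ in $\cL$. This is the key input and is supplied by \cite[Thm.~A \& Prop.~5.1]{Ygouf}, where density of (generic) leaves of the full Rel foliation in eigenform loci of non-square discriminant in $\HH(1,1)$ is established. Combining these two ingredients yields the desired conclusion.

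The only substantive step is the appeal to Ygouf's density theorem; the rest of the argument is a formal reduction using the general framework developed earlier in the paper, so I do not anticipate any serious obstacle beyond correctly identifying $Z_\cL$ and matching the foliation $\mathfrak{R}_1$ with the object whose dense leaf is produced in \cite{Ygouf}.
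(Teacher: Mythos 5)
Your proposal is correct and follows the same route the paper takes: identify $Z_{\cL}=Z$ as one-dimensional (since $k=2$ in $\HH(1,1)$), note that its complexification is the full Rel foliation restricted to the eigenform locus, invoke Ygouf's density theorem for that foliation, and conclude via Corollary~\ref{cor: dense leaf enough}. The paper states exactly this derivation (citing \cite[Thm.~A \& Prop.~5.1]{Ygouf} and Corollary~\ref{cor: dense leaf enough}) without spelling out the intermediate checks you make explicit, so your argument is a slightly more detailed rendering of the intended proof.
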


Recall that in \cite{Winsor 2} Winsor proved the existence of dense
real Rel leaves, and dense leaves for one-dimensional flows $Z_0$, in all
strata. Using these results in conjunction with Corollary \ref{cor:
  dense leaf enough}, one can 
obtain an alternative proof of Theorem \ref{thm: main} that avoids the use of
Proposition \ref{prop: criterion}.

We also consider the entropy of real-Rel flows, and show the
following:
\begin{thm}\label{thm: rel entropy}
  Let $\cL, m_{\cL}, Z_{\cL}, z_0$ be as in
  the statement of Theorem \ref{thm: general}. Then the entropy of the action of
  $\Rel_{z_0}$ on the measure space $(\cL, m_{\cL})$ is zero. 
\end{thm}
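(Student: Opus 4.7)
The plan is to use the commutation of the real Rel action with the diagonal subgroup $g_t \df \diag(e^t,e^{-t})\in G$, together with the scaling of entropy under time-rescaling. I would first record, as a short lemma from the definitions in \S\ref{subsec: rel foliation}, the identity
\[
g_t \circ \Rel_{s z_0} \circ g_{-t} \;=\; \Rel_{e^t s\, z_0} \qquad \text{for every } s,t\in\R,
\]
valid for any $z_0\in Z_\cL$. This holds because $g_t$ acts on the $\R^2$-coefficients of $H^1(S,\Sigma;\R^2)$ by scaling the horizontal factor by $e^t$, while real Rel vectors are horizontal relative cohomology classes; in particular the conjugation action of $g_t$ on $Z_\cL$ is scalar multiplication by $e^t$.

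Since $m_\cL$ is $G$-invariant, conjugation by $g_t$ is a measure-preserving isomorphism of $(\cL,m_\cL)$ with itself, so entropy is invariant under it: $h_{m_\cL}(\Rel_{s z_0})=h_{m_\cL}(\Rel_{e^t s\, z_0})$. Combining this with the elementary scaling law $h_{m_\cL}(\Rel_{r z_0})=|r|\,h_{m_\cL}(\Rel_{z_0})$ valid for any measure-preserving one-parameter flow, one deduces
\[
h_{m_\cL}(\Rel_{z_0}) \;=\; e^t\, h_{m_\cL}(\Rel_{z_0}) \qquad \text{for every } t>0,
\]
which forces $h_{m_\cL}(\Rel_{z_0})=0$ provided this entropy is finite.

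The main obstacle is therefore establishing finiteness of $h_{m_\cL}(\Rel_{z_0})$, since $\cL$ is non-compact and Kushnirenko's theorem does not apply directly. I would handle this via the Margulis--Ruelle inequality applied to the smooth diffeomorphism $\Rel_{z_0}$, which reduces the question to showing that $\log^+\|D\Rel_{z_0}\|$ is $m_\cL$-integrable; this should follow from smoothness of the Rel vector field combined with standard polynomial bounds on growth of differentials and on the decay of $m_\cL$ near the boundary of $\cL$ in its ambient stratum. An alternative that avoids finiteness altogether is to show directly that on each compact $K\subset\cL$ the number of $(\varepsilon,n)$-separated $\Rel_{z_0}$-orbit segments contained in $K$ grows at most polynomially in $n$ (reflecting the slow nature of $\Rel_{z_0}$, which on any Lyapunov block conjugates with $g_t$ by a scalar), giving zero topological entropy on compact invariant pieces; combined with the variational principle and an exhaustion of $\cL$ by compact subsets of $m_\cL$-mass tending to $m_\cL(\cL)$, this yields the conclusion without needing a separate finiteness step.
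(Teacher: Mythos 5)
Your steps (1)--(4) are exactly the reduction the authors themselves sketch in the remark immediately after Theorem \ref{thm: rel entropy}: conjugation by $g_t$ shows $\Rel_{z_0}$ is measure-theoretically isomorphic to $\Rel_{tz_0}$, so the entropy is $0$ or $\infty$. The authors explicitly state, however, that ``the Rel flow is not continuous, and we could not find a simple way to rule out infinite entropy,'' and the entire content of \S\ref{sec: zero entropy} is a direct argument that bypasses this dichotomy. Your proposal leaves the finiteness step at exactly the level the paper says it got stuck, and the two routes you suggest for it both have real gaps.

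\textbf{Margulis--Ruelle.} The map $\Rel_{z_0}$ is not a smooth diffeomorphism of $\cL$: it is only defined on the full-measure set $\hat\cL$ of surfaces with no horizontal saddle connections, it is not continuous on $\cL$, and $\cL$ is non-compact. None of the standard hypotheses for Ruelle's inequality are met. Moreover, the statement that $D\Rel_{z_0}$ is ``slow'' is only true in flat period coordinates, where it is literally the identity. The Finsler sup-norm metric of \S\ref{sec: zero entropy} is \emph{not} flat, so $\|D\Rel_{z_0}\|$ measured with respect to it is not identically $1$ and, near the boundary of $\cL$ where the sup-norm degenerates, is not obviously bounded, let alone with $\log^+$ integrable against $m_\cL$. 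You would have to prove a quantitative estimate on the sup-norm along Rel orbits comparable to Proposition \ref{prop: local growth}, plus a tail estimate for $m_\cL$, before invoking the inequality --- and at that point you are doing most of the work of the paper's actual proof.

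\textbf{Polynomial growth of separated sets on compacta.} The variational principle relates measure-theoretic entropy to topological entropy of \emph{compact invariant} systems. An exhaustion of $\cL$ by compact sets of nearly full $m_\cL$-mass does not give $\Rel_{z_0}$-invariant sets, so bounding $(\vre,n)$-separated orbit segments inside a fixed compact $K$ does not directly bound $h_{m_\cL}(\Rel_{z_0})$: generic orbits leave $K$ and can return, and in between they visit the thin part of $\cL$ where the metric distortion is uncontrolled. Making this argument rigorous requires controlling how often a typical orbit is in a good compact set (a nondivergence statement for $g_{-t}$) and a local non-expansion estimate precisely of the kind the paper proves in Proposition \ref{prop: local growth} and exploits via the Shannon--McMillan--Breiman theorem and a Besicovitch covering argument.

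In short, the heart of the theorem is the finiteness (equivalently, the direct zero-entropy) argument, and your proposal defers exactly that part to tools that do not apply in the stated form. The paper's proof is genuinely different: it never uses the $g_t$-conjugation dichotomy, but instead assumes $h>0$, uses Shannon--McMillan--Breiman to produce many small-measure Bowen-type atoms, uses Proposition \ref{prop: local growth} (a local Lipschitz estimate for $\Rel_{z_0}$ derived via conjugation with $g_{-\ell}$ and compactness) to show that $\Rel_{z_0}$-orbit segments starting in a polynomially small ball do not separate for polynomially many steps, and then gets a contradiction via a covering argument (Proposition \ref{prop: Besicovitch}) together with the volume estimate of Lemma \ref{lem: lemma 1} and the nondivergence hypothesis \eqref{eq: moreover}.
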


Using the geodesic flow one easily shows that $\Rel_{z_0}$ is
conjugate to $\Rel_{tz_0}$ for any $t >0$, and from this it follows that
the entropy is either zero or infinite. However, the Rel flow is
not continuous, and we could not find a
simple way to rule out infinite entropy. 
Our proof gives a more general result --- see Theorem \ref{thm: rel entropy general}. However, the
argument fails for $Z_0$-invariant measures for 
which the backward time geodesic flow diverges almost surely, and thus
we do not settle the question of whether the topological
entropy of real Rel flows is zero.

\subsection{Outline}
In \S\ref{sec: prelims} we give background material on translation surfaces, their
moduli spaces, and the Rel foliation. In \S \ref{sec: ergodic theory} we use standard
facts about joinings to build a measure $\theta$ on the product of two strata
(see \eqref{eq: we have}), depending on a real Rel flow $Z_0$, such that
if $\theta$ is the 
product measure, then $Z_0$ is ergodic. In \S \ref{subsec: mixing of all
  orders} we discuss a technique of Mozes 
that makes it possible to upgrade ergodicity
to mixing of all orders. In \S \ref{sec: Mautner} we show that $\theta$ is
ergodic for the diagonal action of the upper triangular group $P
\subset G$ on the product of two strata. In \S \ref{sec: magic
  wand upgraded} we state 
a far-reaching measure rigidity result of Brown, Eskin, Filip and
Rodriguez-Hertz for $P$-ergodic measures on products of two strata. In
\S \ref{sec: putting together} we use this measure rigidity result, as well as prior
results for the action on one stratum due to Wright, in order to
characterize the situations in which $\theta$ is not a product
measure, thus proving
Theorem \ref{thm: general}. Proposition \ref{prop: criterion} is proved in \S \ref{sec:
    topological condition}, and we check its conditions to deduce
  Theorems \ref{thm: main} and \ref{thm: main 1} in \S \ref{sec:
    checking condition}. We prove
  Theorem \ref{thm: 
    rel entropy} in \S \ref{sec: zero entropy}.  

  \subsection{Acknowledgements}
We are very grateful to Alex Eskin for crucial contributions to this
project. We also thank Simion Filip, Curt McMullen and Alex Wright for useful
discussions, and
acknowledge the support of ISF grant 2919/19, BSF grant 2016256,
NSFC-ISF grant 3739/21, a Warnock chair, a Simons Fellowship, 
and NSF grants DMS-1452762 and DMS-2055354.

\section{Preliminaries about translation surfaces}\label{sec: prelims}
\subsection{Strata, period coordinates} \label{subsec: strata}
In this section we collect
standard facts about translation surfaces, and fix our notation. For
more details, we refer to reader to \cite{zorich survey, Wright
  survey, eigenform}. Below we briefly summarize the treatment in
\cite[\S 2]{eigenform}.

Let $S$ be a compact oriented surface of genus $g$, $\Sigma = \{\xi_1,
\ldots, \xi_k \} \subset S$ a finite set, $a_1, \ldots, a_k$
non-negative integers with $\sum a_i = 2g-2$, and $\HH = \HH(a_1,
\ldots, a_k)$ the corresponding stratum of unit-area translation surfaces. We
let $\HH_{\mathrm{m}}=\HH_{\mathrm{m}}(a_1, \ldots, a_k)$ denote the
stratum of unit-area marked translation surfaces and $\pi : \HH_{\mathrm{m}} \to
\HH$ the forgetful mapping. Our convention is that singular points
are labeled, or equivalently, $\HH = \HH_{\mathrm{m}} / \Mod(S,
\Sigma)$, where $\Mod(S, \Sigma)$ is the group of isotopy classes of
orientation-preserving homeomorphisms of $S$ fixing $\Sigma$, up to an
isotopy fixing $\Sigma$.

There is an $\R_{>0}$-action that dilates the atlas of a translation
surface by $ c\in \R_{>0}$. For a stratum $\cH$ and marked stratum
$\cH_{\mathrm{m}}$, we denote the collection of surfaces of arbitrary
area, obtained by
applying such dilations, by $\bar \cH, \, \bar \cH_{\mathrm{m}}$.
The marked stratum $\bar \HH_{\mathrm{m}}$ is a linear manifold modeled on
the vector space $H^1(S, \Sigma; \R^2)$. It has a developing map
$\dev: \bar \HH_{\mathrm{m}} \to H^1(S, \Sigma; \R^2)$, sending an element
of $\bar \HH_{\mathrm{m}}$ represented by $f: S \to M$, where $M$ is a
translation surface, to $f^*(\hol(M, \cdot))$, where for an oriented
path $\gamma$ in $M$ which is either closed or has endpoints at
singularities, $\hol(M, \gamma) = \left(\begin{matrix} \int_\gamma
    dx\\ \int_\gamma dy\end{matrix} \right)$, and $dx, dy$ are the
1-forms on $M$ inherited from the plane. Furthermore, there is an open
cover $\{\mathcal{U}_\tau\}$ of $\HH_{\mathrm{m}}$, indexed by
triangulations $\tau$ of $S$ with triangles whose vertices are in
$\Sigma$, and  maps $\dev|_{\mathcal{U}_\tau}: \mathcal{U}_{\tau} \to
H^1(S, \Sigma; \R^2)$, which are homeomorphisms onto their image, and
such that the transition  maps on overlaps for this atlas are
restrictions of linear automorphisms of $H^1(S, \Sigma; \R^2)$. 

This atlas of charts $\{(\mathcal{U}_\tau,
\dev|_{\mathcal{U}_\tau})\}$ is known as {\em period
  coordinates}. Since each $\mathcal{U}_\tau$ is identified via  period
coordinates with an open subset of the vector space $H^1(S, \Sigma;
\R^2)$, the tangent space at each $\mathcal{U}_\tau$ is identified
canonically with $H^1(S, \Sigma; \R^2)$, and thus the tangent bundle
of $\HH_{\mathrm{m}}$ is locally constant. A sub-bundle of the tangent
bundle is called {\em locally constant} or {\em flat} if it is
constant in the charts afforded by period coordinates. The $\Mod(S,
\Sigma)$-action on $\HH_{\mathrm{m}}$ is properly discontinuous, and
hence $\HH$ is an orbifold, and the map $\pi: \HH_{\mathrm{m}} \to
\HH$ is an orbifold covering map.

The group $G$ acts on translation surfaces in $\HH$ by modifying
planar charts, and acts on $H^1(S, \Sigma; \R^2)$ via its action on
$\R^2$, thus inducing a $G$-action on $\HH_{\mathrm{m}}$. The
$G$-action commutes with the $\Mod(S, \Sigma)$-action, and thus the map
$\pi$ is $G$-equivariant for these actions. The $G$-action on
$\HH_{\mathrm{m}}$ is free, since $\dev(g\q) \neq \dev(\q)$ for any
nontrivial $g \in G$.  We will use the following subgroups of $G$:
\[
  \begin{split}
g_t = \left( \begin{matrix} e^t & 0 \\ 0 & e^{-t}\end{matrix} \right),
\ \ \ 
u_s
= & \left( \begin{matrix}  1 & s \\ 0 & 1\end{matrix} \right) \\
U = \{u_s : s \in \R\}, \ \ \
P = & \left\{ \left( \begin{matrix} a & b
     \\ 0 & a^{-1} \end{matrix} \right): a > 0, \ b
 \in \R\right\}.
\end{split}
\]

\subsection{Rel foliation and real Rel foliation}\label{subsec: rel
  foliation} We define and list some important properties of the Rel
foliation, the real Rel foliation, and the corresponding action on the
space of surfaces without horizontal saddle connections. See
\cite{mahler, eigenform} for more details. See also \cite{zorich
  survey, McM}, and references therein.

We have a canonical splitting $\R^2 = \R \oplus \R$ and we write $\R^2
= \R_{\mathrm{x}} \oplus \R_{\mathrm{y}}$ to distinguish the two
summands in this splitting. There is a corresponding splitting 
\begin{equation}\label{eq: splitting horizontal vertical}
  H^1(S, \Sigma; \R^2) = H^1(S, \Sigma; \R_{\mathrm{x}})  \oplus
  H^1(S, \Sigma; \R_{\mathrm{y}}). 
\end{equation}

We also have a canonical restriction map $\Res: H^1(S, \Sigma; \R^2) \to
H^1(S; \R^2)$ (given by restricting a cochain to absolute
periods). Since $\Res$ is topologically defined, its kernel $\ker (
\Res)$ is $\Mod(S, \Sigma)$-invariant. Moreover, from our convention
that singular points are marked, the 
$\Mod(S, \Sigma)$-action on $\ker ( \Res)$ is
trivial.

Let
\begin{equation}\label{eq: def Z}
  \mathfrak{R}  \df \ker ( \Res ) \ \ \text{ and }  \ \ Z \df \mathfrak{R}
  \cap H^1(S, \Sigma; \R_{\mathrm{x}}). 
\end{equation}

Since $H^1(S, \Sigma; \R_{\mathrm{x}})$ and $H^1(S, \Sigma;
\R_{\mathrm{y}})$ are naturally identified with each other via their
identification with $H^1(S, \Sigma; \R)$, for each $Z_1 \subset Z$ we
can define the space $\mathfrak{R}_1$ spanned by the two copies of $Z_1$
in $H^1(S, \Sigma; \R_{\mathrm{x}})$ and $H^1(S, \Sigma;
\R_{\mathrm{y}})$ respectively. The space $\mathfrak{R}_1$ is the {\em
  complexification} of $Z_1$. This terminology arises from viewing
$H^1(S, \Sigma; \R^2)$ as $H^1(S, \Sigma;
\C)$, a vector space over $\C$, viewing $H^1(S, \Sigma;
\R_{\mathrm{x}})$ and $H^1(S, \Sigma; 
\R_{\mathrm{y}})$ as the real and imaginary subspace of this complex
vector space. With this viewpoint, $\mathfrak{R}_1$ is the $\C$-span
of $Z_1$.

For any subspace $Z_1 \subset \mathfrak{R}$, we can foliate the vector
space $H^1(S, \Sigma; \R^2)$ by affine subspaces parallel to
$Z_1$. Pulling back this foliation using the period coordinate charts
gives rise to a foliation of $\bar {\cH}_{\mathrm{m}}$. Since
monodromy acts trivially on $\mathfrak{R}$, this foliation descends to
a well-defined foliation on $\bar \cH$. It is known (see
e.g.\;\cite[Prop. 4.1]{eigenform}) that the area of a surface is
constant 
on leaves of the Rel foliation, and thus the Rel foliation and any of
its subfoliations descend to a foliation of $\cH$. The foliation
corresponding to $\mathfrak{R}$ (respectively, to $Z$) is known as the
{\em Rel foliation} (respectively, the {\em real Rel foliation}).

Because the $\Mod(S, \Sigma)$-monodromy action fixes all points of
$\mathfrak{R}$, the leaves of the Rel foliation, and any of its
sub-foliations, acquire a translation structure. In particular, they
are equipped with a natural measure. 

For any $v \in Z$ we have a constant vector
field, well-defined on $\HH_{\mathrm{m}}$ and on $\HH$, everywhere
equal to $v$. Integrating this vector field we get a partially defined
{\em real REL flow (corresponding to $v$)} $(t, q) \mapsto
\Rel_{tv}(q)$; the flow may not be defined for all time due to
possible `collide of zeroes'. For every $q \in 
\HH$ it is defined for $t \in I_q$, where the {\em domain of
  definition} $I_q  = I_q(v) $ is an open subset of $\R$ which
contains $0$. The sets $I_q(v)$, are explicitly described in
\cite[Thm. 6.1]{eigenform}. Let $\hat \cH$ denote the set of surfaces
in $\cH$ with no horizontal saddle connections. Then $I_q = \R$ for
all $q \in \hat \cH$.

If $q \in \HH, \, s \in \R$ and $\tau \in I_q$
then
\begin{equation*}\tau \in  I_{u_sq} \ \ \text{ and
  } \ \ \Rel_{\tau v}(u_sq) = u_s \Rel_{ \tau v}(q).
\end{equation*}
Similarly, if $q \in
\HH, \, t \in \R$ and $\tau \in I_q$ then
\begin{equation}\label{eq: commutation rel geodesic}
  \tau' \df e^t \tau \in I_{g_tq} \ \ \text{ and } \ \ 
  \Rel_{\tau' v}(g_t q) = g_t \Rel_{\tau v}(q).
\end{equation}
In particular, since
$P$ preserves $\hat \cH$ and $P = \{g_t u_s : t, s \in \R\}$, there
is an action of $P \ltimes Z$ on $\hat \cH$, given by $(p, z).q = p
\Rel_z(q)$.

\section{Preliminaries from ergodic theory}\label{sec: ergodic theory}
\subsection{Ergodic decomposition}
We will use the notation $\mathbb{G} \circlearrowright (X, \mu)$ to indicate
that $\mathbb{G}$ is a locally compact second countable group, $(X,
\mathcal{B})$ is a standard Borel space, and $\mu$ is a probability
measure on $\mathcal{B}$ preserved by the $\mathbb{G}$-action.
We say that $\mathbb{G}
\circlearrowright (Y, \nu)$ is a {\em factor} of $(X, \mu)$ if there
is a measurable $\mathbb{G}$-invariant conull subset $X_0 \subset X$, and a
measurable map $T: X_0 
\to Y$ such that $T \circ g = g \circ T$ for all $g \in \mathbb{G}$, and $\nu =
T_* \mu$. In this situation we refer to $T$ as the factor map. Given a
factor map, there is a (unique up to nullsets) measure disintegration
$\mu = \int \mu_y \, 
d\nu(y)$, for a Borel mapping $y \mapsto \mu_y$ from $Y$ to the space
of Borel probability measures on $X$, such that $\mu_y(T^{-1}(y))=1$
for $\nu$-a.e. $y$. Equivalently we can write $\mu = \int_x \mu'_x \,
d\mu(x)$, where $\mu'_x \df \mu_{T(x)}$.  For a closed
subgroup $H \subset \mathbb{G}$, we say that $\mu$ is $H$-ergodic if any invariant
set is null or conull. We have the following well-known ergodic
decomposition theorem:
\begin{prop}\name{prop: ergodic components} Suppose $\mathbb{G}
  \circlearrowright (X, \mu)$, and $H$ is a 
  closed subgroup of $\mathbb{G}$. Then there is a factor of $H
  \circlearrowright (X, \mu)$, called the {\em space of ergodic
    components} and denoted  by $X 
\ec H$, with the following properties:
\begin{itemize}
\item[(i)]
  For $\nu$-a.e. $y \in X \ec H$, $\mu_y$ is $H$-invariant and $H$-ergodic.
\item[(ii)]
  $H$ acts trivially on $X \ec H$. 
\item[(iii)]   $H \circlearrowright (X, \mu)$ is ergodic if and only if $X \ec H =
  \{pt.\}.$ 
\item[(iv)]
The properties (i)--(iii) uniquely determine the factor $X \ec H$ up
to measurable isomorphism. 
  \item[(v)]
  If $H \lhd \mathbb{G}$ then $\mathbb{G} \circlearrowright (X \ec H, \nu)$. 

\end{itemize}

\end{prop}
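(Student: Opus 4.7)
The plan is the standard construction of the ergodic decomposition as the quotient by the sub-$\sigma$-algebra $\mathcal{I}_H \subset \mathcal{B}$ consisting of $H$-invariant Borel sets (modulo $\mu$-nullsets). First, using that $\mathbb{G}$, and hence $H$, is second countable, I would pick a countable dense subgroup $H_0 \subset H$. Since the $H$-action preserves $\mu$ and is continuous in measure, a measurable set is $H$-invariant mod nullsets iff it is $H_0$-invariant mod nullsets, so $\mathcal{I}_H$ is countably generated. By the measurable-quotient theorem for standard Borel probability spaces, there exists a standard Borel probability space $(Y,\nu)$ and a measurable map $T: X_0 \to Y$, defined on a conull $H$-invariant subset $X_0$, with $T_* \mu = \nu$ and $T^{-1}(\mathcal{B}_Y)$ coinciding with $\mathcal{I}_H$ modulo $\mu$-nullsets. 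Set $X \ec H \df (Y,\nu)$. Since each $h \in H$ fixes $\mathcal{I}_H$ pointwise mod nullsets, $T\circ h = T$ a.e., giving property (ii).

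Next, I would invoke Rokhlin's disintegration theorem to write $\mu = \int \mu_y \, d\nu(y)$ with $\mu_y$ concentrated on $T^{-1}(y)$ for $\nu$-a.e. $y$. For invariance in property (i), fix $h \in H_0$: then $h_*\mu = \mu$, and since $T\circ h = T$ a.e.\ the pushforward family $\{h_*\mu_y\}$ is also a disintegration of $\mu$ over $T$; by uniqueness of the disintegration, $h_*\mu_y = \mu_y$ for $\nu$-a.e. $y$. Intersecting the corresponding conull sets over the countable family $H_0$ and then using density of $H_0$ in $H$ together with continuity in measure gives $H$-invariance of $\mu_y$ for $\nu$-a.e.~$y$. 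For ergodicity, any bounded $H$-invariant Borel function $\phi$ on $X$ is $\mathcal{I}_H$-measurable, hence $\mu$-a.e.\ equal to $\psi\circ T$ for some bounded Borel $\psi$ on $Y$, and therefore $\mu_y$-a.e.\ constant for $\nu$-a.e.~$y$. Applying this simultaneously to a countable separating family of $H$-invariant indicator-type functions (obtained as $L^2$-limits of ergodic averages of a countable dense subset of $L^2(X,\mu)$) forces $\mu_y$ to be $H$-ergodic for $\nu$-a.e.~$y$.

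Properties (iii) and (iv) are then formal. For (iii), $H$-ergodicity of $\mu$ is equivalent to $\mathcal{I}_H$ being $\mu$-trivial, which is equivalent to $Y$ being a single point. For (iv), any factor $(Y',\nu')$ with properties (i)--(iii) corresponds to a sub-$\sigma$-algebra that is $H$-invariant (so contained in $\mathcal{I}_H$ mod nullsets) and whose fiber measures are $H$-ergodic (forcing the reverse inclusion), yielding a measurable isomorphism with $(Y,\nu)$. For (v), if $H \lhd \mathbb{G}$ and $A \in \mathcal{I}_H$, then for any $g \in \mathbb{G}$,
\[
H(g^{-1}A) = g^{-1}(gHg^{-1})A = g^{-1}(HA) = g^{-1}A,
\]
so $g^{-1}A \in \mathcal{I}_H$; hence $\mathbb{G}$ preserves $\mathcal{I}_H$ and the $\mathbb{G}$-action descends to $(Y,\nu)$. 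The main technical point I expect is the construction of $Y$ as a standard Borel space together with the measurable projection $T$ — i.e., that a countably generated invariant sub-$\sigma$-algebra of a standard Borel probability space yields a genuine standard Borel quotient on which disintegration applies. This is the content of Rokhlin's measurable-quotient and disintegration theorems, which is where the foundational input sits.
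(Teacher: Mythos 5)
Your proof is correct, and it fills in details that the paper simply cites. The paper's own proof is essentially a reference: parts (i)--(ii) are attributed to Varadarajan's Theorem 4.4, whose construction produces a Borel, $H$-invariant map $\beta$ from $X$ into the space of $H$-invariant ergodic probability measures with $\mu=\int\beta(x)\,d\mu(x)$, and $X\ec H$ is obtained as the quotient by the level sets of $\beta$; (iv) is cited to Varadarajan's Lemma 4.4; and (v) is deduced from (iv) by noting that pushing an $H$-ergodic invariant measure forward by $g\in\mathbb{G}$ (using $gHg^{-1}=H$) again yields an $H$-ergodic invariant measure. You instead build $X\ec H$ intrinsically as the Rokhlin quotient of $(X,\mu)$ by the $\sigma$-algebra $\mathcal{I}_H$ of $H$-invariant sets modulo nullsets, get the conditional measures $\mu_y$ from Rokhlin disintegration, and prove (v) directly by showing $\mathcal{I}_H$ is $\mathbb{G}$-invariant. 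The two constructions produce measurably isomorphic factors (this is exactly (iv)); the paper's route is shorter because it outsources the work, while yours is self-contained and makes the uniqueness in (iv) transparent via equality of sub-$\sigma$-algebras. Two small caveats: the fact that $\mathcal{I}_H$ is countably generated modulo nullsets is automatic for any sub-$\sigma$-algebra of a standard probability space and does not actually require the reduction to a countable dense $H_0$ (though $H_0$ is still needed, as you use it, for the a.e.\ invariance of the $\mu_y$); and in (v) one should also verify that the induced $\mathbb{G}$-action on the quotient is jointly Borel, a standard technical point present in either approach.
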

\begin{proof}
For (i) and (ii) see \cite[Thm. 4.4]{Varadarajan} (in the notation of
\cite{Varadarajan},  these assertions follow from the fact that
$\beta$ is a map on points and is $H$-invariant).  Assertion (iii) is
immediate from definitions and (iv) follows from \cite[Lemma
4.4]{Varadarajan}. For (v), one can 
argue using the uniqueness property (iv), and the fact that  the image
of an 
$H$-invariant ergodic measures under any element $g \in \mathbb{G}$ is also
$H$-invariant and ergodic.
  \end{proof}
 \begin{remark}\label{rem:prime}
An action is called {\em prime} if it has no 
factors (besides the action itself, and the trivial action on a point). 
The construction above shows that if $H \lhd \mathbb{G}$, $\mathbb{G}'$ is a
subgroup of $\mathbb{G}$ so that $\mathbb{G}' \circlearrowright (X,\mu)$ is prime and $H
\circlearrowright (X, \mu)$ is not isomorphic to the trivial action,
then $H \circlearrowright(X, \mu)$ is
ergodic. 
This is not the 
approach we will take for proving Theorem \ref{thm: main}. 
\end{remark}
\subsection{Joinings}
We recall some well-known facts about joinings, see \cite{de la rue}
and references therein. 
Let $\mathbb{G} \circlearrowright (X_i, \mu_i)$ for $i=1,2$. A {\em joining} is a
measure $ \theta$ on $X_1 \times X_2$, invariant under the diagonal
action of $\mathbb{G}$, such that $\pi_{i*} \theta =
\mu_i$. A {\em self-joining} is a joining in case $X_1 = X_2$. If
$(X_i, \mu_i) \to (Z, \nu)$ is a joint factor then the {\em relatively
  independent joining over $Z$} is the joining $\int_Z (\mu_1)_z
\times (\mu_2)_z \, d\nu(z)$, where $\mu_i = \int_Z (\mu_i)_z \, d\nu(z)$ is
the disintegration of $\mu_i$. In case $X_1 = X_2 =X$, and $Z = X \ec
H$ is the space of ergodic components of the action of $H$ on $(X,
\mu)$ as in Proposition \ref{prop: ergodic components}, we obtain the {\em
  relatively independent self-joining over $X \ec H$}. This joining satisfies: 

\begin{prop}\name{prop: ergodic theory abstract nonsense}
  The following are equivalent:
  \begin{itemize}
  \item
    $H \circlearrowright (X, \mu)$ is ergodic.
  \item
    The relatively independent self-joining over $X \ec H$ is $\mu
    \times \mu$.
       \end{itemize}

     \end{prop}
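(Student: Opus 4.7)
The plan is to express the relatively independent self-joining explicitly via the disintegration, and then detect triviality of the ergodic-component factor by pairing against tensor-product test functions.

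Write $T: X \to Z \df X \ec H$ for the factor map, $\nu \df T_*\mu$, and $\mu = \int_Z \mu_y \, d\nu(y)$ for the corresponding disintegration; the relatively independent self-joining is then
\[
\theta \ = \ \int_Z \mu_y \times \mu_y \, d\nu(y).
\]
The forward direction is immediate: if $H \circlearrowright (X,\mu)$ is ergodic, Proposition \ref{prop: ergodic components}(iii) gives $Z = \{pt.\}$, so $\mu$ is the unique $\mu_y$ and $\theta = \mu \times \mu$.

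For the reverse direction I would fix a bounded Borel $f : X \to \R$, set $\tilde f(y) \df \int_X f \, d\mu_y$, which is a version of the conditional expectation $E(f \mid T)$, and compute the two sides
\[
\int_{X \times X} f \otimes f \, d\theta \ = \ \int_Z \tilde f(y)^2 \, d\nu(y), \qquad \int_{X \times X} f \otimes f \, d(\mu \times \mu) \ = \ \Big(\int_Z \tilde f \, d\nu\Big)^2.
\]
The hypothesis $\theta = \mu \times \mu$ thus gives $\Var_\nu(\tilde f) = 0$, i.e.\ $\tilde f$ is $\nu$-a.e.\ constant, for every bounded Borel $f$. Equivalently, every bounded $T$-conditional expectation is $\mu$-a.e.\ constant.

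To conclude ergodicity, let $A \subset X$ be $H$-invariant. By Proposition \ref{prop: ergodic components}(i) each $\mu_y$ is $H$-ergodic, so $\mathbf{1}_A$ is $\mu_y$-a.e.\ constant for $\nu$-a.e.\ $y$; equivalently, $\mathbf{1}_A$ coincides $\mu$-a.e.\ with its $T$-conditional expectation. The previous step then forces $\mathbf{1}_A$ to be $\mu$-a.e.\ constant, so $\mu(A) \in \{0,1\}$, proving the equivalence. No step is really hard here; the only point that needs care is the identification of the $H$-invariant $\sigma$-algebra with the pullback $T^{-1}\mathcal{B}(Z)$, and this is exactly what the $H$-ergodicity of the conditional measures in Proposition \ref{prop: ergodic components}(i) supplies.
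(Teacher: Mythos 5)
Your argument is correct: the paper states Proposition \ref{prop: ergodic theory abstract nonsense} without proof (treating it as standard ``abstract nonsense''), so there is no paper argument to compare against, but the route you take is exactly the standard one. The identity $\int f\otimes f\,d\theta - \int f\otimes f\,d(\mu\times\mu) = \Var_\nu(\tilde f)$ correctly detects triviality of the factor, and the final step — that for $H$-invariant $A$ one has $\mathbf{1}_A = E(\mathbf{1}_A\mid T)$ $\mu$-a.e., via $H$-ergodicity of the fiber measures from Proposition \ref{prop: ergodic components}(i) — is precisely the needed identification of the invariant $\sigma$-algebra with $T^{-1}\mathcal{B}(X\ec H)$ up to null sets. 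The forward direction via Proposition \ref{prop: ergodic components}(iii) is immediate, as you say.
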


     We note two properties of this self-joining. We fix a topology on
     $X$ which generates the $\sigma$-algebra, and denote by $\supp  \, \mu$
the topological support of $\mu$, i.e., the smallest closed set of
full measure.
     \begin{prop}\name{prop: some properties}
Let $\theta$ be the measure on $X \times X$ which is the relatively
independent self-joining over $X \ec H$, for some $H$, and let $T: X
\to X \ec H$ be the factor map. Then the
following hold:
\begin{itemize}
\item
  We have 
  \begin{equation}\label{eq: we have}
\theta = \int_X \mu_{T(x)} \times \mu_{T(x)} \, d\mu(x). 
\end{equation}
\item
  The set $\left \{x \in X : x \notin \supp \, \mu_{T(x)} \right\}$ is of
  $\mu$-measure zero. 
\item
If $X = \supp \, \mu$ then $\supp \, \theta$ contains the diagonal
$\Delta_X  \df \{(x,x) : x 
\in X\}$.
  \end{itemize}
     \end{prop}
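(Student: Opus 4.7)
\medskip

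\noindent\textbf{Proof proposal.} The plan is to verify the three bullets in order, with essentially all the work done by standard manipulations with the disintegration $\mu=\int_Z \mu_z\,d\nu(z)$, where $Z=X\ec H$, $\nu=T_*\mu$, and $\mu_{T(x)}=\mu'_x$.

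For the first bullet, I would simply observe that by definition the relatively independent self-joining over $Z$ is $\int_Z \mu_z\times\mu_z\,d\nu(z)$. Applying the change-of-variables identity $\int_Z F(z)\,d\nu(z)=\int_X F(T(x))\,d\mu(x)$ to the (measure-valued) function $F(z)=\mu_z\times\mu_z$, tested against continuous compactly supported functions on $X\times X$, immediately gives \eqref{eq: we have}.

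For the second bullet, I would use second countability of $X$. Fix a countable basis $\{U_n\}$ of the topology, and let
\[
N\df\{x\in X: x\notin \supp\,\mu_{T(x)}\},\qquad N_n\df\{x\in U_n : \mu_{T(x)}(U_n)=0\}.
\]
By definition of support, $N\subset \bigcup_n N_n$, so it suffices to show $\mu(N_n)=0$ for each $n$. Set $B_n\df\{z\in Z:\mu_z(U_n)=0\}$, so $N_n\subset T^{-1}(B_n)\cap U_n$. Since $\mu_z$ is concentrated on $T^{-1}(z)$, for any $z\in B_n$ one has $\mu_z(N_n)\le \mu_z(U_n)=0$, while for $z\notin B_n$ the fiber $T^{-1}(z)$ is disjoint from $T^{-1}(B_n)\supset N_n$, so again $\mu_z(N_n)=0$. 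Then
\[
\mu(N_n)=\int_Z \mu_z(N_n)\,d\nu(z)=0,
\]
and taking a countable union proves the claim.

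For the third bullet, assume $X=\supp\,\mu$. By the second bullet, the set $G\df\{x: x\in\supp\,\mu_{T(x)}\}$ is $\mu$-conull, and therefore dense in $X$. Since $\supp\,\theta$ is closed, it suffices to show $(x,x)\in\supp\,\theta$ for every $x\in G$. Given such an $x$ and arbitrary open neighborhoods $U,V$ of $x$, set $W\df U\cap V$; by choice of $x$, $\mu_{T(x)}(W)>0$. Using \eqref{eq: we have},
\[
\theta(U\times V)=\int_X \mu_{T(x')}(U)\,\mu_{T(x')}(V)\,d\mu(x')\ge \int_E \mu_{T(x')}(W)^2\,d\mu(x'),
\]
where $E\df\{x':\mu_{T(x')}(W)>0\}=T^{-1}(\{z:\mu_z(W)>0\})$. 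From $\mu(W)=\int \mu_z(W)\,d\nu(z)>0$ (as $W$ is a neighborhood of a point of $\supp\,\mu$) we get $\nu(\{z:\mu_z(W)>0\})>0$, hence $\mu(E)>0$ and $\theta(U\times V)>0$, as required.

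The argument is essentially bookkeeping with the disintegration, and I do not anticipate a serious obstacle; the only mildly subtle point is the use of second countability in the second bullet to reduce to countably many open sets, and the observation that the fibers of $T$ carry the $\mu_z$ in order to control $\mu_z(N_n)$ on and off $B_n$.
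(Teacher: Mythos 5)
Your proposal is correct, and all three bullets are verified with sound bookkeeping; the approach matches the spirit of the paper's (very terse) proof, though you fill in details the paper leaves implicit. Two observations worth making.

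For the second bullet, the paper cites $H$-invariance and ergodicity of the $\mu_z$, but your argument correctly identifies that the only ingredient actually needed is the fiber-concentration property $\mu_z(T^{-1}(z))=1$ for $\nu$-a.e.\ $z$, combined with second countability to reduce to countably many basis sets. This is cleaner than what the paper suggests.

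For the third bullet there is a small redundancy you should notice: the restriction to $x\in G$ and the remark ``by choice of $x$, $\mu_{T(x)}(W)>0$'' play no role in the computation that follows. What you actually use is $\mu(W)>0$, which comes from $W$ being a neighborhood of a point of $\supp\,\mu=X$, not from $x\in\supp\,\mu_{T(x)}$. So your estimate
\[
\theta(U\times V)\ \geq\ \int_E \mu_{T(x')}(W)^2\,d\mu(x')\ >\ 0,\qquad E=T^{-1}\bigl(\{z:\mu_z(W)>0\}\bigr),
\]
already works for \emph{every} $x\in X$, and the second bullet, together with the density-of-$G$ argument, can be deleted. This is a genuinely shorter route than the one the paper indicates (the paper explicitly invokes the second bullet to get the third); your proof shows the third bullet needs only \eqref{eq: we have} and the disintegration identity $\mu(W)=\int_Z\mu_z(W)\,d\nu(z)$.
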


     \begin{proof}
   Formula \eqref{eq: we have} is immediate from the definition of the
   relatively independent self-joining over $X \ec H$. Since each
   $\mu'_x = \mu_{T(x)}$ is $H$-invariant and ergodic, and
   $\mu'_x(T^{-1}(T(x)))=1$, the set $\{x \in X: x \notin \supp \,
   \mu'_x\}$ is a nullset. From this, and from \eqref{eq: we have} we
   obtain the last assertion.
              \end{proof}
     
  \subsection{Ergodicity, mixing, and mixing of all
    orders} \label{subsec: mixing of all orders}
  For $\mathbb{G}
  \circlearrowright (X, \mu)$, let 
$L^2_0(\mu)$ denote the Hilbert space of $L^2$-functions on $(X, \mu)$
of integral zero, and let $k \geq 2$. The action   is called
{\em $k$-mixing} if for any $f_1, \ldots, f_k \in L^2_0(\mu)$ and for
any $k-1$ sequences $\left(g^{(i)}_n \right)_{n \in \N} \in
\mathbb{G}, \ i=1, \ldots, k-1$, for which all of the sequences
$$
\left(g^{(i)}_n \right)_{n \in \N}\ \ ( 1 \leq i \leq  k-1) \ \ \ \text{ and } \left(g^{(i)}_n
(g^{(j)}_n)^{-1} \right)_{n \in \N}\ \  (1 \leq i < j \leq k-1)
$$
eventually leave every compact subset of $\mathbb{G}$, we have
$$
\int_X f_1\left(g^{(1)}_nx \right) \cdots f_{k-1}\left(g_n^{(k-1)}x \right) f_k(x) \, d\mu(x)
\stackrel{n \to \infty}{\longrightarrow} \prod_{i=1}^k \int_X f_i \, d\mu.
$$
We say that the action is {\em mixing} if it is 2-mixing, and {\em
  mixing of all orders} if it is mixing of order $k$ for any $k \geq
2$. It is easy to check that mixing implies ergodicity of any
unbounded subgroup of $\mathbb{G}$. We have the 
following:

  \begin{prop}\label{prop: adapting Mozes}
Let $Z_0 \cong \R$ and let $\{g_t\}$ be a one-parameter group acting
on $Z_0$ by dilations, i.e., for all $v \in Z_0$ and $t \in \R$ we
have 
$g_t v = e^{\lambda t}v$ for some $\lambda \neq 0$. Let $F = \{g_t \}
\ltimes Z_0$ and let $F \circlearrowright (X, \mu)$ be a probability
space. The following are equivalent:
\begin{enumerate}[(a)]
\item \label{item: first item} the restricted flow
  $Z_0\circlearrowright (X, \mu)$ is ergodic;  
\item \label{item: second one}  the restricted flow
  $Z_0\circlearrowright (X, \mu)$ is mixing of 
  all orders; 
\item \label{item: 2.5} the restricted flow  $Z_0\circlearrowright (X,
  \mu)$ is mixing; 
\item \label{item: third one} any nontrivial element of $Z_0$ acts ergodically. 
\end{enumerate}
 
\end{prop}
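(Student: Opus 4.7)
The implications $(b)\Rightarrow(c)\Rightarrow(d)\Rightarrow(a)$ are essentially formal. $(b)\Rightarrow(c)$ is the $k=2$ specialization. For $(c)\Rightarrow(d)$, fix a nontrivial $z_0\in Z_0$ and a $z_0$-invariant $f\in L^2_0(X,\mu)$; since $(nz_0)_{n\in\N}$ leaves every compact subset of $Z_0\cong\R$, mixing yields $\|f\|^2=\langle U_{nz_0}f,f\rangle\to 0$, forcing $f=0$. Finally $(d)\Rightarrow(a)$ is trivial, as every $Z_0$-invariant set is $z_0$-invariant. The substance of the proposition is therefore $(a)\Rightarrow(b)$, which I plan to prove in two stages.

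Stage 1: $(a)\Rightarrow(c)$ by spectral scaling. The semidirect-product relation $g_tvg_{-t}=e^{\lambda t}v$ for $v\in Z_0$ translates into the statement that the Koopman operator $U_{g_t}$ intertwines the one-parameter unitary group $(U_v)_{v\in Z_0}$ on $L^2_0(X,\mu)$ with its reparametrization $v\mapsto e^{\lambda t}v$. On the spectral side this forces the spectral type on $\widehat{Z_0}\cong\R$ to be invariant, as a measure class, under every positive dilation $\xi\mapsto e^{\lambda t}\xi$. Ergodicity precludes an atom at $0$, and since the only nontrivial dilation-invariant measure class on $\R\setminus\{0\}$ is the Lebesgue class, the spectral type is absolutely continuous. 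The Riemann--Lebesgue lemma then gives $\langle U_{v_n}f,g\rangle\to 0$ for every $v_n\to\infty$ in $Z_0$, establishing mixing.

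Stage 2: $(c)\Rightarrow(b)$ by induction on $k$, via Mozes's joining technique. Assume $(k-1)$-mixing; given sequences $v^{(1)}_n,\ldots,v^{(k-1)}_n\in Z_0$ with all $v^{(i)}_n$ and pairwise differences $v^{(i)}_n-v^{(j)}_n$ tending to infinity, I would extract a weak-$*$ limit point $\theta$ of the $k$-fold self-joinings
\[
\nu_n \df \bigl(v^{(1)}_n\times\cdots\times v^{(k-1)}_n\times\mathrm{id}\bigr)_*\Delta_*\mu
\]
on $X^k$, where $\Delta\colon X\to X^k$ is the diagonal embedding. By construction $\theta$ is invariant under the diagonal $Z_0$-action; using the rescaling identity together with compatible subsequence extraction, it is also invariant under the diagonal $g_t$-action, since multiplying all $v^{(i)}_n$ by a common factor $e^{\lambda t}$ preserves the divergence conditions and yields the same limit. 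The inductive hypothesis identifies every projection of $\theta$ to $k-1$ coordinates with $\mu^{\otimes(k-1)}$. Mozes's core observation is that $\theta$ is additionally invariant under the off-diagonal action $\rho_v\colon(x_1,\ldots,x_k)\mapsto(vx_1,x_2,\ldots,x_k)$: the pushforward $\rho_v\nu_n$ equals the joining with first parameter shifted to $v+v^{(1)}_n$, which again satisfies the divergence conditions, and a limiting argument invoking the diagonal $g_t$-invariance relates $\rho_v$ and $\rho_{e^{\lambda t}v}$ to produce the same limit. Disintegrating $\theta$ over its projection to the last $k-1$ coordinates then gives conditional measures that are $v$-invariant for every $v\in Z_0$, hence equal to $\mu$ by ergodicity. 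Combined with the identification of the projection with $\mu^{\otimes(k-1)}$, this yields $\theta=\mu^{\otimes k}$, which is precisely $k$-mixing.

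The hard part is Stage 2, specifically the coordinated subsequence extraction ensuring simultaneous invariance of the limit $\theta$ under both the diagonal $g_t$-action and the off-diagonal $\rho_v$-action; this is where the ``$ax+b$''-structure of $F=\{g_t\}\ltimes Z_0$ is essential, since the contracting dilation on $Z_0$ is what allows one to trade off-diagonal shifts for diagonal ones in the limit.
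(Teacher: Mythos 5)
Your chain $(b)\Rightarrow(c)\Rightarrow(d)\Rightarrow(a)$ is fine and matches the paper's, and your Stage 1 (spectral scaling argument) is a correct and self-contained way to get $(a)\Rightarrow(c)$; the paper instead cites Katok--Thouvenot for countable Lebesgue spectrum and then Petersen, but this is the same underlying observation (the conjugation $g_t Z_0 g_{-t} = e^{\lambda t}Z_0$ forces the maximal spectral type to be dilation-invariant, hence Lebesgue once the atom at $0$ is excluded by ergodicity), so the two routes buy the same thing.

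Stage 2 is where there is a genuine gap, and you identify the location of the gap correctly but do not close it. The two invariances you claim for the weak-$*$ limit $\theta$ --- diagonal $g_t$-invariance and $\rho_v$-invariance for \emph{every} $v\in Z_0$ --- are not established by the argument you give, and the first one is likely false in general. Pushing $\nu_n$ forward by the diagonal $g_t$ yields the joining with parameters $e^{\lambda t}v^{(i)}_n$ in place of $v^{(i)}_n$; this is a \emph{different} sequence of joinings, and while it also satisfies the divergence conditions, there is no reason its limit along the \emph{same} subsequence equals $\theta$. Asserting that it ``yields the same limit'' is precisely the point that needs proof, and without it the subsequent claim that $\rho_v$ and $\rho_{e^{\lambda t}v}$ can be traded against each other is circular. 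Note also that you are aiming for more than the argument actually delivers: Mozes's scheme, and the paper's adaptation of it, establishes invariance of $\theta$ under a \emph{single} nontrivial off-diagonal element, not under the whole off-diagonal copy of $Z_0$.

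The paper's mechanism that closes this gap is the following (stated for $k=3$). One checks that $\mu_n$, the pushforward of the diagonal by $(0,b_n,b_n+c_n)\in Z_0^3\subset F^3$, is \emph{exactly} invariant under
$$h_n(s) = \bigl(g_s,\ (1-e^{\lambda s})b_n\cdot g_s,\ (1-e^{\lambda s})(b_n+c_n)\cdot g_s\bigr)\in F^3,$$
since conjugating the diagonal by $g_s$ costs nothing ($\mu$ is $g_s$-invariant) and can be absorbed into a reparametrization of the $Z_0$-shifts. For fixed $s>0$ the $F^3$-distance from $h_n(s)$ to the identity tends to infinity with $n$, while for fixed $n$ it tends to $0$ as $s\to 0$; one therefore chooses $s_n\to 0$ so that $d_{F^3}(\mathrm{id},h_n(s_n))=1$ for all large $n$. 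Passing to a subsequence, $h_n(s_n)$ converges, and because $g_{s_n}\to\mathrm{id}$ the limit lands in $Z_0^3$ and has the form $(0,u,v)$ with $(u,v)\neq(0,0)$. It is this \emph{balancing of $s_n\to 0$ against $n\to\infty$}, not a claimed $g_t$-invariance of $\theta$ itself, that produces the nontrivial off-diagonal element fixing $\theta$. Without some version of this step your Stage 2 does not go through, so the proposal as written is incomplete precisely at the point you flag as the hard part.
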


\begin{remark}
The group $F$ appearing in Proposition \ref{prop: adapting Mozes} is
isomorphic as a Lie group to  the subgroup $P$ of upper triangular
matrices in $G$, but in our application we will use it for the group
generated by a one-parameter real Rel flow $Z_0$ and the diagonal flow $\{g_t\}$.
\end{remark}

\begin{proof}
Clearly \ref{item: second one} $\implies$ \ref{item: 2.5}
$\implies$ \ref{item: third one} $\implies$ \ref{item: first item}. We
assume that the $Z_0$-flow is ergodic. 
  To see that it is mixing, it is enough by
  \cite[Chap. 2, Prop. 5.9]{Petersen}
  to prove that it has countable Lebesgue spectrum, and for this, use
  \cite[Prop. 1.23 \& Prop. 2.2]{Katok_Thouvenot}.  The proof of
  mixing of all orders follows verbatim from an argument of
  Mozes \cite{Mozes}, for 
  mixing actions of Lie groups which are `Ad-proper'. Since our group
  $ F$ is not Ad-proper, we cannot cite \cite{Mozes}
  directly, so we sketch the proof. For notational convenience we
  deduce 3-fold mixing from mixing (the proof that `$k$-fold mixing
  $\implies  k+1$-fold mixing', for $k \geq 3$, is identical but
  requires more cumbersome notation). 

  We use additive notation in the
      group $Z_0$, and denote the action of $Z_0$ on $X$ by $(z,x)
      \mapsto z.x$. 
Let 
  $\left(b_n\right)_{n \in \N}$ and $\left(c_n \right)_{n \in \N}$ be sequences in
      $Z_0$ such that each of the sequences $\left(b_n\right)_{n \in
        \N}, \, \left(c_n\right)_{n \in \N}, \, \left(b_n+
        c_n\right)_{n \in \N} $ eventually leaves 
      every compact set,  and let $f_1,
      f_2, f_3$ be in $L^2_0(\mu)$. We need to prove that
      $$
\int_X f_1(x) f_2(b_n.x) f_3((b_n+c_n).x) \, d\mu(x) \stackrel{n \to
  \infty}{\longrightarrow}  \int_Xf_1 \, d\mu \,  \int_X f_2 \, d\mu \, \int_X
f_3  \, d\mu. 
      $$
      For each $n$, define a measure $\mu_n$ on $X^3 \df X \times X \times X$ by
      $$
      \int_{X^3}  f \, d\mu_n \df \int_X f(x, b_n.x, (b_n+c_n).x) \, d\mu(x),
      \ \ 
\forall f \in C_c(X^3).
      $$
That is, $\mu_n$ is the pushforward of the diagonal measure on
$X^3$ by the triple $(0, b_n, b_n+c_n)$. It is easy to see that
3-mixing is equivalent to the fact that the weak-* limit of $\mu_n$ is
the measure 
$\mu^3 \df \mu \times \mu \times \mu$. The group $F^3 \df F\times F
\times F$ acts on $X^3$ by acting separately on each component, and as
in \cite{Mozes}, since $Z_0$ is mixing,     
it suffices to show that any measure $\nu$ on $X^3$ which is a weak-*
limit of a subsequence of $\left(\mu_n\right)_{n \in \N}$, is
invariant under $(0, u, v) \in \R^{3} \subset F^3,$ for some $(u,v)
\in \R^{2} \sm  (0,0)$. We claim that 
for any $s \in \R$ the measure 
$\mu_n$ is invariant under 
$$h_n(s)\df \left(g_s , b_n \cdot g_s \cdot (-b_n),
(b_n+c_n) \cdot g_s \cdot (-b_n-c_n) \right),$$
where the multiplication is in the group $F^3$. 
 Indeed,  since $\mu$ is $\{g_s\}$-invariant,
$$\int_{X^3}f\, d\mu_n=\int_{X}f \left(g_sx,b_n. (g_sx), (b_n+c_n).(g_sx)\right) \,
d
\mu (x), $$
and
$$
h_n(s) 
\cdot (\mathrm{id}_F, b_n ,b_n+c_n)= (g_s , b_n \cdot
g_s, (b_n+c_n) \cdot g_s).$$  That is, applying $h_n (s) $ 
changes one description of $\mu_n$ to another.

We embed $F$ as a multiplicative group of matrices in $\GL_{2}(\R)$
and let $d_F$ be the metric  on $F$ induced by some norm on $\GL_{2}(\R) $. 
By a straightforward computation we have
$$
h_n(s) = \left(g_s,\ (1-e^{\lambda s})b_n \cdot g_s, \ (1-e^{\lambda
    s})(b_n+c_n)\cdot g_s \right),
  $$
  and $d_F(\mathrm{id}_F,
h_n(s_n))$ is a continuous function of $s$ which goes to $0$ as $s \to
0$ and for any fixed $s>0$, increases to infinity as $n \to \infty$. 
Therefore  we can choose $s_n \to 0$ so that
$d_F(\mathrm{id}_F, 
h_n(s_n))=1$ for all large enough $n$. As in \cite{Mozes}, $\nu$ is
invariant under some subsequential limit of $h_n(s_n)$ which is of the
form $(0, u, v)$ for some $(u, v) \in \R^{2} \sm (0,0)$. This establishes our
sufficient condition.  
\end{proof}


\section{The relatively independent self-joining for a Rel
  flow}   \label{sec: Mautner}
Recall that $\hat\cL \subset \cL$ is the set of surfaces without
horizontal saddle connections, and this is a $P$-invariant set of full
measure with respect to $m_{\cL}$. We can
combine the product action of $Z_\cL \times Z_\cL$ on $\hat\cL \times
\hat\cL$
with the diagonal action of $P$ to obtain an action of the semi-direct
product $P \ltimes (Z_\cL
\times Z_\cL)$ on $\hat\cL \times \hat\cL$. Since $\hat\cL \subset \cL$ is of
full measure, and the arguments of this section involve passing to
sets of full measure, in the remainder of this section we will ignore
the distinction between $\cL$ and $\hat\cL$.

\begin{prop}\name{prop: Mautner}
  Let $Z \subset Z_\cL$ be a closed connected subgroup. 
If $\theta$ is an invariant probability measure for an action of the semidirect
product $P \ltimes (Z\times Z)$ on $\cL \times \cL$, then any $f
\in L^2(\theta)$ which is 
$\{g_t\}$-invariant is also $Z \times Z$-invariant. 
\end{prop}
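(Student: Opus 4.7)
The plan is to invoke the classical Mautner phenomenon for unitary representations. Let $U$ denote the unitary representation of $P\ltimes(Z\times Z)$ on $L^2(\theta)$ arising from the action, and suppose $f\in L^2(\theta)$ satisfies $U(g_t)f=f$ for every $t\in\R$. Fix an arbitrary $(v,w)\in Z\times Z$; the goal is to conclude $U((v,w))f=f$, which establishes the claimed $Z\times Z$-invariance.

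The key ingredient is the commutation relation \eqref{eq: commutation rel geodesic}, combined with the fact (recorded in \S\ref{subsec: rel foliation}) that $u_s$ and $\Rel_{\tau v}$ commute on $\hat\cL$. Together these imply that the $P$-action on $Z$ by conjugation is the scalar dilation $g_t\, v\, g_{-t}=e^{t}v$, and hence in the semidirect product $P\ltimes(Z\times Z)$ one has the identity
\[
g_{-t}\,(v,w)\,g_t \;=\; (e^{-t}v,\,e^{-t}w),
\]
which tends to the identity in $Z\times Z$ as $t\to\infty$. Using $U(g_{\pm t})f=f$ I would then rewrite
\[
U((v,w))f \;=\; U(g_t)\,U\bigl(g_{-t}(v,w)g_t\bigr)\,U(g_{-t})f \;=\; U(g_t)\,U\bigl((e^{-t}v,\,e^{-t}w)\bigr)f.
\]
Strong continuity of $U$ at the identity of $Z\times Z$ yields $U\bigl((e^{-t}v,e^{-t}w)\bigr)f\to f$ in $L^2(\theta)$, and unitarity of $U(g_t)$ together with $U(g_t)f=f$ upgrades this to $U(g_t)\,U\bigl((e^{-t}v,e^{-t}w)\bigr)f\to f$. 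Hence $U((v,w))f=f$, as required.

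The one technical point requiring attention is the strong continuity of $U$. This should be handled by restricting to the $P\ltimes(Z_\cL\times Z_\cL)$-invariant full-measure subset $\hat\cL\times\hat\cL$, on which the Rel action is globally defined and jointly continuous; continuity of the action then gives strong continuity of the unitary representation by a standard argument. I do not anticipate any substantive obstacle beyond this verification --- the whole proof is the Mautner trick applied in its simplest semidirect-product form, where the essential input is just the scalar dilation of $Z$ (and thus of $Z\times Z$) under conjugation by $\{g_t\}$.
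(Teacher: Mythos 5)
Your proof is correct and takes essentially the same approach as the paper: the key observation is that conjugation by $g_t$ dilates $Z \times Z$ (so $g_{-t}(v,w)g_t = (e^{-t}v,e^{-t}w) \to 0$), and the conclusion is the Mautner phenomenon. The paper simply records this contraction and cites \cite[Prop.~11.18]{EW}, whereas you unroll the standard Mautner argument explicitly (including the strong-continuity point, which is handled correctly by passing to $\hat\cL\times\hat\cL$); the mathematical content is the same.
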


\begin{proof}
 For 
any $z \in Z \times Z$, $g_t z g_{-t} \to_{t \to -\infty} 0$. So the claim
follows from the Mautner phenomenon, see e.g. 
\cite[Prop 11.18]{EW}.
\end{proof}

\begin{prop}\name{prop: ergodicity}
Let $(\cL, m_\cL)$ be a $G$-orbit-closure with a fully supported
$
P$-invariant ergodic measure, let
$Z \subset Z_\cL$ be a connected closed subgroup, and
let $\theta$ on
$\cL\times \cL$
be the relatively independent joining
over $\cL \ec Z$. Then $\theta$ is $P$-invariant and $\{g_t\}$-ergodic
(and hence $P$-ergodic).  Also $\Delta_{\cL} \subset \supp \,
\theta$. 
\end{prop}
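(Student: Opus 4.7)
The plan is to establish $P$-invariance of $\theta$ first, then use Proposition \ref{prop: Mautner} to upgrade $\{g_t\}$-invariance of $L^2$-functions to $Z \times Z$-invariance, descend to the factor $\cL \ec Z$, and close with $P$-ergodicity together with a second Mautner argument. The support assertion is then a direct citation.

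First I would establish $P$-invariance of $\theta$. The commutation relations in \S\ref{subsec: rel foliation} show that $P$ normalizes $Z$ on the conull set $\hat\cL$, so $P \ltimes Z$ acts on $\hat\cL$. Applying Proposition \ref{prop: ergodic components}(v) to the normal subgroup $Z$, the factor space $\cL \ec Z$ inherits a $P$-action for which the factor map $T$ is $P$-equivariant, and uniqueness of the ergodic decomposition (Proposition \ref{prop: ergodic components}(iv)) forces the disintegration $m_\cL = \int \mu_y\, d\nu(y)$ to be $P$-equivariant, i.e.\;$p_* \mu_y = \mu_{p \cdot y}$ for $\nu$-a.e.\;$y$. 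Pushing forward the formula \eqref{eq: we have} by $(p,p)$ and changing variables in the outer integral using $P$-invariance of $m_\cL$ then yields $(p \times p)_* \theta = \theta$. Combined with the obvious $Z \times Z$-invariance of each slice $\mu_y \times \mu_y$, this makes $\theta$ invariant under $P \ltimes (Z \times Z)$.

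Next I would prove $\{g_t\}$-ergodicity. Let $f \in L^2(\theta)$ be $\{g_t\}$-invariant. Proposition \ref{prop: Mautner}, applied to the $P \ltimes (Z \times Z)$-action just produced, gives that $f$ is $Z \times Z$-invariant. Since $\theta$ is concentrated on $\{(x,y) : T(x) = T(y)\}$, the map $\tilde T(x,y) \df T(x)$ is $\theta$-a.s.\;well-defined, $\tilde T_* \theta = \nu$, and the disintegration along $\tilde T$ reads $\theta = \int (\mu_y \times \mu_y)\, d\nu(y)$. Each $\mu_y \times \mu_y$ is $(Z \times Z)$-ergodic, since a product of ergodic systems is ergodic under the product group action (a Fubini argument), so $f$ must be a.s.\;constant on $\tilde T$-fibers, that is $f = F \circ \tilde T$ for some $F \in L^2(\nu)$. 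Because $\tilde T$ intertwines the diagonal $\{g_t\}$-action on $\cL \times \cL$ with the $\{g_t\}$-action on $\cL \ec Z$, the function $F$ is $\{g_t\}$-invariant on $(\cL \ec Z, \nu)$. A standard Mautner argument (using $g_t u_s g_{-t} \to \mathrm{id}$ as $t \to -\infty$) then forces $F$ to be $U$-invariant, hence $P$-invariant; and since $(\cL \ec Z, \nu)$ is a factor of the $P$-ergodic system $(\cL, m_\cL)$, $F$ is $\nu$-a.s.\;constant, so $f$ is $\theta$-a.s.\;constant. $P$-ergodicity is immediate because $\{g_t\} \subset P$.

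Finally, $\Delta_\cL \subset \supp \theta$ is exactly the last bullet of Proposition \ref{prop: some properties}, given the hypothesis $\supp m_\cL = \cL$. The main obstacle is the bookkeeping in the first step: checking that the ergodic decomposition interacts correctly with the $P$-action, which hinges on $P$ normalizing $Z$ and on the uniqueness of the $Z$-ergodic disintegration. Once $\theta$ is known to be $P \ltimes (Z \times Z)$-invariant, the rest is a cascade of standard Mautner-type arguments.
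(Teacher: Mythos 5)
Your proposal is correct, and it is a bit more careful than the paper's own proof.  Both arguments begin by deducing $P$-invariance of $\theta$ from Proposition~\ref{prop: ergodic components}(v), and both use Proposition~\ref{prop: Mautner} to upgrade $\{g_t\}$-invariance of an $L^2(\theta)$-function to $(Z\times Z)$-invariance.  Where you diverge is in the disintegration: the paper disintegrates $\theta$ along the projection $\pi_1$ onto the first factor, so that each fiber $\Omega_x = \{x\}\times\cL$ carries a $Z$-ergodic measure $\theta_x = \delta_x\times\mu_{T(x)}$, and a $(Z\times Z)$-invariant $f$ is constant on each $\Omega_x$, producing $\bar f$ on $\cL$ itself.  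You instead disintegrate along $\tilde T(x,y)=T(x)$ over $\cL\ec Z$, using that each slice $\mu_y\times\mu_y$ is $(Z\times Z)$-ergodic by Fubini, and land with a function $F$ on $\cL\ec Z$.  Both variants are fine; yours has the small advantage of working directly over the natural base of the joining.  More significantly, you honestly track what invariance the base function inherits: starting from $\{g_t\}$-invariance of $f$, you obtain only $\{g_t\}$-invariance of $F$, and you supply the needed second Mautner step ($g_t u_s g_{-t}\to\mathrm{id}$ as $t\to-\infty$) to promote this to $P$-invariance before invoking $P$-ergodicity of the factor.  The paper's written proof instead takes $f$ to be $P$-invariant from the outset, which makes the $P$-invariance of $\bar f$ immediate but establishes only $P$-ergodicity of $\theta$, not the stronger $\{g_t\}$-ergodicity asserted in the statement.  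Your extra Mautner step is exactly what closes that gap, so your version proves the claim as stated.  The support assertion is handled identically in both, by citing Proposition~\ref{prop: some properties}.
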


As we will see in \S \ref{sec: magic wand upgraded}, under the
conditions of the Proposition, $m_\cL$ is the so-called `flat
  measure' on $\cL$. 
\begin{proof}
  Let $\pi:
  \LL \times \LL \to \cL$ be the projection onto
  the first factor, and let $\nu
  = \pi_* \theta.$ For each $x 
  \in \cL$, let $\Omega_x \df  \pi^{-1}(x) = \{x\} \times \cL$ be the fiber, and
let $\theta_x$ be the fiber measure appearing in the disintegration
$\theta = \int_{\cL } \theta_x \, d\nu(x)$. Then $Z$ acts on
$\Omega_x$ via the second factor in $Z \times Z$, and $\theta_x$ is 
$Z$-invariant and ergodic by the definition of the ergodic
decomposition.

It follows from Proposition \ref{prop: ergodic components}(v) that
$\theta$ is $P$-invariant. To prove ergodicity,  
let $f \in L^2(\cL \times \cL, \theta)$ be a $P$-invariant function. By
Proposition \ref{prop: Mautner}, $f$ is $Z \times Z$-invariant. For each
$x \in \cL$, let $f_x \df f|_{\Omega_x}$. There is $\cL_0 \subset \cL$
such that $m_{\cL}(\cL_0)=1$ and for every $x \in \cL_0$, $f_x $
belongs to $
L^2(\Omega_x, \theta_x)$ and is $Z$-invariant. Hence, by ergodicity,
there is $\bar{f}: \cL_0 \to \R$ such that for every $ x\in \cL_0$,
$\bar{f}(x)$ is the $\theta_x$-almost-sure value of $f_x$. Since $f$
is $P$-invariant for the diagonal action of $P$, $\bar{f}$ is
$P$-invariant for the action of $P$ on $\cL$. By ergodicity of $P \circlearrowright 
(\cL, m_\cL)$, $\bar f$ is $\nu$-a.e. constant, and thus $f$ is
$\theta$-a.e. constant.

The last assertion follows from Proposition \ref{prop: some properties}. 
\end{proof}
\section{An upgraded magic wand theorem}\label{sec: magic wand upgraded}
The
celebrated `magic wand' Theorem of 
Eskin and Mirzakhani \cite{EM}, and 
ensuing work of Eskin, Mirzakhani and Mohammadi \cite{EMM}, classified
$P$- and $G$-invariant probability measures and orbit-closures on
strata of translation surfaces. These results can be summarized as
follows (see \cite[Defs. 1.1 \& 1.2, Thms. 1.4 \& 1.5]{EM}):

\begin{thm}\name{thm: magic wand} Let $\cH, \, \cH_{\mathrm{m}}, \,
  \bar \HH, \, \bar \HH_{\mathrm{m}}$ be as in \S \ref{subsec: strata}. Any
  $P$-invariant ergodic probability measure 
  $m$ has the following
  properties:
  \begin{itemize}
  \item[(i)]
    It is $G$-invariant.
  \item[(ii)]
    There is a complex-affine manifold $\mathcal{N}$ and a proper immersion
    $\varphi: \mathcal{N} \to \bar \HH$ such that
    $$\cL \df \supp \, m = \HH
    \cap \varphi(\mathcal{N}).$$
  \item[(iii)]
    There is an open $G$-invariant subset $U \subset \bar \HH$
    satisfying $m(U )=1$, and for any $x \in U \cap \cL$ there
    is an open set $V$ containing $x$ such that $V$ is evenly covered
    by $\mathcal{V} \subset \HH_{\mathrm{m}}$ under the map $\pi:
    \bar{\HH}_{\mathrm{m}} \to \bar{\HH}$, 
    and $\psi \df \dev \circ 
    (\pi|_{\mathcal{V}})^{-1} \circ \varphi$ coincides on its domain
    with a $\C$-linear map, with real coefficients.
  \item[(iv)]
    The subspace $W\df \mathrm{Im}
  (\psi)$ is symplectic, and the measure $m$ is
  obtained via the cone construction from the Lebesgue measure on
  $W$.
\item[(v)]
 The complement $\cL \sm U$ is a finite union of supports of measures
 satisfying properties (i)--(iv), for which the manifolds
 $\mathcal{N}'$ appearing in (ii) satisfy $\dim \mathcal{N}' < \dim \mathcal{N}.$
    \end{itemize}

  Any orbit-closure for the $P$-action is a set $\cL$ as above. 
  \end{thm}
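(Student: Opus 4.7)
The plan is to establish Theorem \ref{thm: magic wand} as a dictionary between the notation of this paper and the statements of \cite{EM, EMM}: the theorem is not new but rather a consolidated restatement of the Eskin--Mirzakhani measure-classification together with the orbit-closure classification of Eskin--Mirzakhani--Mohammadi, so the ``proof'' I would write is essentially a careful cross-reference rather than a fresh argument. The task is therefore to match up each of (i)--(v) and the final sentence with a named result in those papers, and to verify that our notation ($\varphi$, $\NN$, $U$, $\psi$, $W$, the cone measure) is consistent with theirs.

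First I would handle (ii)--(iv). Assertion (ii) is the definition of an \emph{affine invariant submanifold} from \cite[Def.~1.1]{EM}: the locally finite image of a proper immersion $\varphi:\NN \to \bar\HH$ whose local expression in period coordinates is a $\CC$-linear subspace of $H^1(S,\Sigma;\R^2)$ cut out by equations with real coefficients. Assertion (iii) just records that this affine structure is honestly affine only on the regular locus $U$ of $\varphi$, where a local inverse $(\pi|_{\mathcal V})^{-1}$ makes sense and $\dev\circ(\pi|_{\mathcal V})^{-1}\circ\varphi$ is literally a $\CC$-linear map. Assertion (iv), that $W = \mathrm{Im}(\psi)$ is symplectic and that $m$ is the cone-measure associated to Lebesgue measure on $W$, is the content of \cite[Def.~1.2]{EM} and appears in the statement of the main classification \cite[Thm.~1.4]{EM}. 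Assertion (i), promoting $P$-invariance to $G$-invariance, and the statement that $\cL=\supp\,m$ is affine invariant, are the conclusions of \cite[Thm.~1.4]{EM}---the ``magic wand'' proper---while the final sentence of our theorem, that every $P$-orbit-closure arises as such an $\cL$, is exactly \cite[Thm.~1.5]{EM}.

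Assertion (v) requires a small additional argument. The set $\cL\setminus U$ is the singular locus of the immersion $\varphi$, a proper analytic subvariety of $\cL$. Any $P$-invariant probability measure supported on $\cL\setminus U$ may itself be ergodically decomposed, and by \cite[Thm.~1.4]{EM} each ergodic component is of the type described in (i)--(iv). To get from ``possibly uncountably many'' to ``finitely many'', I would invoke the isolation/finiteness statement in \cite[Thm.~2.3]{EMM}: inside any fixed affine invariant submanifold there are only finitely many maximal proper affine invariant submanifolds. Iterating this on dimension yields the finite union of lower-dimensional pieces asserted in (v).

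The main obstacle is, of course, that essentially none of this is genuinely being proved here: the random-walk / exponential-drift mechanism that upgrades $P$-invariance to $G$-invariance and forces affine support, as well as the finiteness of isolated families, are inherited wholesale from the long and technical arguments of \cite{EM, EMM}. What I would actually write therefore reduces to bookkeeping: identifying each clause of the consolidated statement with the corresponding definition or theorem in those references, and confirming that the dimension-induction needed for (v) terminates because $\dim\NN$ is a non-negative integer that strictly decreases at each step.
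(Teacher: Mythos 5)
Your proposal is correct and takes essentially the same approach as the paper: the paper offers no proof of Theorem~\ref{thm: magic wand} beyond the parenthetical citation to \cite[Defs.~1.1 \& 1.2, Thms.~1.4 \& 1.5]{EM}, so the ``proof'' in both cases is the bookkeeping you describe, matching each clause to the corresponding definition or theorem in \cite{EM, EMM}. Your treatment of (v) via the isolation/finiteness theorem of \cite{EMM} and dimension induction supplies detail the paper leaves implicit, but it is a fleshing-out of the same citation rather than a different route.
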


  We will refer to $\cL$ as an {\em orbit-closure} and to $m = m_\cL$
  as a {\em flat measure} on $\cL$. Orbit-closures are referred
  to as {\em affine invariant manifolds} and also as {\em invariant
    subvarieties}. The use of an evenly covered neighborhood in item
  (iii) is a standard approach for defining period coordinates  (see e.g. \cite{MS}). We
  refer to \cite{Wright survey}  
  for a survey containing more information on orbit-closures.

  In a forthcoming work of Brown, Eskin, Filip and Rodriguez-Hertz,
  the same conclusion is obtained for the
diagonal actions  of 
$P$ and $G$ on a product of strata $\HH \times \HH'$. Namely,  the
following is shown:

\begin{thm}\name{thm: super duper wand}
Let $\HH, \HH'$ be strata of translation surfaces, and let $P$ and
$G$ act on $\HH \times \HH'$ via their diagonal embeddings in $G
\times G$. Then all of the conclusions of Theorem \ref{thm: magic
  wand} hold for this 
action (with $\bar \cH\times \bar \cH'$ replacing $\bar \cH$). 
  \end{thm}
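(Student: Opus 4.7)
The plan is to adapt the strategy of Eskin--Mirzakhani \cite{EM} and Eskin--Mirzakhani--Mohammadi \cite{EMM} to the product setting. A crucial preliminary observation is that $\bar\HH \times \bar\HH'$ is itself a linear manifold, modeled on the direct sum $H^1(S, \Sigma;\R^2) \oplus H^1(S', \Sigma';\R^2)$, and carrying natural period coordinates inherited from each factor. The diagonal $G$-action is affine in these coordinates, the Teichm\"uller geodesic flow $\{g_t\}$ acts diagonally, and the Kontsevich--Zorich cocycle on the product is the direct sum of the two individual cocycles. Thus the abstract framework of \cite{EM}---a $P$-action on an affine bundle equipped with a cocycle whose top Lyapunov exponent is controlled, together with polynomial divergence of horocycles---transfers formally to this setting.

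The first step is to establish recurrence of $\{g_t\}$ to a compact set for any $P$-invariant ergodic probability measure $m$ on $\HH \times \HH'$. Since the marginals $\pi_* m$ and $\pi'_* m$ are $P$-invariant probability measures on $\HH$ and $\HH'$ respectively, one can apply the single-stratum quantitative non-divergence estimates of \cite{EMM} to each marginal and combine them, which also yields the integrability inputs needed in the exponential drift argument. Passing from $P$-invariance to $G$-invariance (conclusion (i)) then follows from the same rotation-group averaging argument as in \cite{EM}, applied diagonally.

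The heart of the proof, and by far the main obstacle, is the exponential drift argument used to produce extra translational invariance. One picks two nearby generic points $(q_1, q_1'), (q_2, q_2') \in \supp m$, flows them diagonally under $g_t$ until their separation grows to unit scale in some Oseledets subspace of the product cocycle, and uses the unipotent $u_s$ together with a Lusin-type measurable choice to produce a nonzero translation direction preserved by $m$. The new difficulty absent from the single-stratum case is that the separation direction may lie in a ``mixed'' Oseledets subspace coupling the two factors, rather than in a subspace tangent to a single factor; one must show, using semisimplicity of the KZ-cocycle and the symplectic structure on each factor, that such mixed directions still generate $\C$-linear subspaces defined over $\R$. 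Iterating and invoking the factorization theorem of \cite{EM} then yields the affine manifold $\NN$ of conclusion (ii). Conclusions (iii)--(v) follow the single-stratum template: (iii) is a general statement about the relation between period coordinates and $\dev$ in an evenly covered chart; (iv) comes from the fact that $W$ contains the tangents to diagonal $G$-orbits, which combined with Hodge-theoretic arguments forces it to be symplectic; and the stratification (v) reduces to an isolation theorem for affine invariant submanifolds in $\bar\HH \times \bar\HH'$, exactly as in \cite{EMM}. The final assertion that $P$-orbit closures are themselves such manifolds $\cL$ is then standard.
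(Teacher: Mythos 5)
The paper does not prove Theorem \ref{thm: super duper wand}; it is stated as a forthcoming result of Brown, Eskin, Filip and Rodriguez-Hertz, and the abstract together with the paragraph preceding the statement make clear that the main theorems of this paper are \emph{conditional} on that external input. There is therefore no internal proof for your proposal to be compared against, and the only fair question is whether your sketch is a plausible route to the result.

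Your outline correctly identifies the framework: the product carries period coordinates modeled on the direct sum of the two relative cohomology groups, the Kontsevich--Zorich cocycle on the product is the direct sum of the factor cocycles, and the burden falls on rerunning the exponential drift / quantitative non-integrability machinery of \cite{EM, EMM} in this larger cocycle. You also rightly flag the genuinely new phenomenon of ``mixed'' Oseledets directions coupling the two factors. But the proposal is an outline of a program, not a proof: the sentence asserting that ``semisimplicity of the KZ-cocycle and the symplectic structure on each factor'' forces mixed directions to generate $\C$-linear subspaces defined over $\R$ names precisely the step that carries all the weight, and offers no argument for it. Handling the analogous step in the single-stratum case already required the algebraic-hull and rigidity results of \cite{EFW, Filip}, and it is far from clear that those transfer verbatim to a direct-sum cocycle whose two summands may be cohomologically unrelated. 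Similarly, ``combine the marginal non-divergence estimates'' suffices for mere recurrence but is not obviously enough for the integrability inputs the drift argument actually needs. The fact that this extension is the subject of a dedicated forthcoming paper by a team of four experts, and that the present authors simply import it, is itself strong evidence that the technical content is substantial. Treating the theorem as a black box, as the paper does, is the honest choice; your sketch is a reasonable heuristic for why the theorem should hold, but it does not constitute a proof.
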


  \section{Proof of main result}\label{sec: putting together}
Using Theorem \ref{thm: super duper wand} and further work of Wright
\cite{Wright field of definition}, 
we can prove our main result.

\begin{proof}[Proof of Theorem \ref{thm: general}]
  Let $Z_0 = \spa_{\R}(z_0)$ be a one-dimensional connected real Rel subgroup.
  Assume that \eqref{item: one} fails, so that the action of $Z_0$ on 
$\left(\cL, m_{\mathcal{L}} \right)$ is not mixing of all
orders.  Then, by Proposition \ref{prop: adapting Mozes} it is not
ergodic. Let $\theta$ be the relatively independent self-joining over
$\cL \ec Z_0$.  Applying Propositions \ref{prop: ergodic theory
  abstract nonsense} and \ref{prop: some properties} we have that $\theta \neq 
m_{\cL} \times m_{\cL}$ and $\Delta_{\cL} \subset \supp \,
\theta$. Applying Proposition \ref{prop:
  ergodicity} and Theorem 
\ref{thm: super duper wand}, we have that there is a $G$-invariant
open subset $U$ of
full $\theta$-measure such that 
$U \cap \supp \, 
\theta$ is the isomorphic image of an affine complex-linear manifold whose
dimension is strictly smaller than $2 \dim \bar \cH$, and $\theta$ is
obtained from Lebesgue measure on this complex-linear manifold by the
cone construction.

We claim that the set
$$U_1 \df \{q \in \cH: (q,q) \in U\}
$$
is of full measure for $(\pi_{1})_* \theta$, where 
$\pi_1 : \cL \times \cL
\to \cL$ is the projection onto the first factor. Indeed, the measure
$\theta$ is invariant under $Z_0 \times \{ \mathrm{Id} \}$, and hence
so is its support. Since $Z_0$ acts by homeomorphisms where defined,
and using property
(v) in Theorems \ref{thm: magic wand} and \ref{thm: super duper
  wand}, we have that the set $U$ is also $Z_{0} \times
\{\mathrm{Id}\}$-invariant. Thus for any $Z_{0}$-ergodic measure,
it is either null or conull. Thus if $q \notin U_1$ and $q$ is
generic for the measure $\mu_{T(q)}$ appearing in \eqref{eq: we have},
then  $\mu_{T(q),q}$ assigns measure zero to 
$U$, where $\mu_{T(q),q}$ is the measure on $\supp\, \theta$ defined
by $\mu_{T(q),q}(A)=\mu_{T(q)}(\{q':(q',q) \in A\})$. If this were to
happen for a positive measure of 
$q$ it would follow from \eqref{eq: we have} and the fact that
$\mu_{T(q)}\times\mu_{T(q)}=\int \mu_{T(q'),q'}d\mu_{T(q)}$ that $U$
does not have 
full measure for $\theta$. 

For $q \in U_1$, let
$N_q$ denote the connected  component of $
U \cap \pi_1^{-1}(q) \cap \supp \, \theta
$ containing $(q,q)$.
Since the fibers
$\pi_1^{-1}(q)$ are also affine submanifolds of $\cL \times \cL$, we
have that the $N_q$ are affine submanifolds contained in $\pi_1^{-1}(q)
\cong \cL$, so we can identify them with invariant submanifolds in
$\cL$ (which we continue to denote by $N_q)$. With this notation we
have $q \in N_q$. 

The mapping $q \mapsto T(N_q)$ is locally constant; that is,
letting $V \subset \bar \cH$ and $\mathcal{V} \subset \bar
\cH_{\mathrm{m}}$ be open sets such that $\pi|_{\mathcal{V}}:
\mathcal{V} \to V$ is a homeomorphism and $q \in V$, the map $q
\mapsto \dev \circ \pi|_{\mathcal{V}}^{-1} (q) $ sends a neighborhood
of $q$ in $N_q$ to an affine subspace $W_q$ of $H^1(S, \Sigma ;
\R^2)$, and the corresponding linear spaces $W_q - W_q$ are the same for
all $q \in V$. Since $m_{\cL} \times m_{\cL}$ is the unique $P$-invariant ergodic
measure on $\cL \times \cL$ of full support, we have $\dim N_q < \dim
\cL$ for every $q \in U_1$. 

Let
$\bar N_q$ denote the set of surfaces (not necessarily of area one)
which are obtained by rescaling surfaces in $N_q$, and let
$$
\mathfrak{N}_q \df T_q(\bar N_q )
$$
(the tangent space to $\bar N_q$ at $q$, thought of as a subset of the
tangent space $T_q(\bar \cL)$). The assignment
$q\mapsto \mathfrak{N}_q $ 
defines a proper flat sub-bundle of the tangent bundle $T(\bar \cL)$. 
Flat sub-bundles of $T(\bar \cL)$ were classified in \cite{Wright field of
  definition}. According to
\cite[Thm. 5.1]{Wright field of definition}, 
$\mathfrak{N}_q \subset \mathfrak{R}_{\cL}$ for each $q$, and
$\mathfrak{N}_q$ is a complex linear subspace which is locally
constant. Since $\mathfrak{R}_{\cL}$ is acted on trivially by monodromy,
we in fact have that $\mathfrak{N}_q$ is independent of $q$, and we
denote it by $\mathfrak{R}$. The leaves $\mathfrak{R}(q)$ are
contained in $\bar N_q$ for each $q$, and of the same dimension. That
is, $\mathfrak{R}(q)$ is the connected component of $\bar{N}_q$
containing $q$. Since Rel deformations do not affect the
area of the surface, we see that $\bar{N}_q = N_q$. In particular 
$\mathfrak{R}(q)$ is closed for each $q$. 

By Proposition \ref{prop: some properties}, for a.e. $q$, $N_q$ is the support
of the ergodic component $(m_{\cL})_q$, and in particular
$$
(m_{\cL})_q(N_q)<\infty, \ \  \text{ for a.e. } q.
$$
Since $Z_0$ acts
ergodically with respect to  $(m_{\cL})_q$, we have that almost surely
$N_q =\mathfrak{R}(q)$. Since the measure $(m_{\cL})_q$ is 
affine in charts, it is a scalar multiple of the
translation-invariant measure on 
$\mathfrak{R}(q)$, and thus the volume $V_q$ of  $\mathfrak{R}(q)$ (with
respect to its translation-invariant measure) is almost surely
finite. It is clear that the function $q \mapsto V_q$ is
$U$-invariant, and by ergodicity, it is constant almost surely.
\end{proof}

\begin{remark}
We note that the above argument works under much weaker conclusions
than those given in Theorem \ref{thm: super duper
  wand}. Indeed, in the first step of the argument, Theorem \ref{thm: super duper
  wand} was used simply to extract a $G$-invariant assignment $q
\mapsto N_q$, where $N_q$ 
is a subspace of $T_q(\cL)$, which is proper if $\theta$ is not the
product joining. A fundamental fact about such $G$-invariant assignments is
that they are very restricted -- besides \cite{Wright field of
  definition}, see  \cite{EFW} and \cite{Filip}.  In particular,
\cite{Filip} gives strong restrictions on assignments that are only
assumed to be defined almost everywhere and 
measurable. 
  \end{remark}

\ignore{
  {\sc Should we add the clause ``in the form that already appears in
    the literature, that is for $SL(2,\mathbb{R})$-orbit closures in a
    single stratum."} 

\begin{thm}[Self-joinings classification]\name{thm: joinings classification}
Let $\cH$ be a stratum and let $\cL \subset \HH$ be an orbit-closure,
with associated measure $m_{\cL}$. Let $\mathfrak{R}_\cL$ denote the 
Rel subspace of $\cL$, and let $\mathfrak{R}_\cL (q)$ denote
 the Rel leaf of $q 
\in \cL$. Let $\theta$ be an ergodic self-joining for the $P$-action on
$(\cL, m_\cL )$, and for $i=1,2$, let $\pi_{i}: \cL \times
\cL \to \cL$ be the natural
projections on the first and second factor. Then one of the following
two possibilities holds: 
\begin{enumerate}[(i)]\item\label{item: i}
  The joining is trivial, i.e.,  $\theta =m_{\cL} 
  \times m_{\cL}.$
\item\label{item: ii} There are $\C$-linear subspaces
$\mathfrak{R}_1, \mathfrak{R}_2 \subset \mathfrak{R}_{\cL}$, such that
for all $(q_1, 
q_2) \in \supp \, \theta$, the leaves $\mathfrak{R}_{1}(q_1)$,
$\mathfrak{R}_{2}(q_2)$ are closed and 
  \eq{eq: not contained1}{\pi^{-1}(q_1) \cap
  \supp \, \theta = \{q_1\} \times \mathfrak{R}_{2}(q_2)}
  and \eq{eq: not contained2}{\pi_2^{-1}(q_2) \cap 
  \supp \, \theta =\mathfrak{R}_{1}(q_1) \times \{q_2\}.}
\end{enumerate}

\end{thm}

\begin{proof}
  We assume that \ref{item: ii} does not hold and derive
  \ref{item: i}. Without loss of
  generality assume \equ{eq: not contained1} fails for $(q_1, q_2)$. 
  From Theorem \ref{thm: super duper wand} we know that $\supp \,
\theta$ is affine in period coordinates. Similarly the fibers
$\pi_i^{-1}(q)$ are affine submanifolds of $\cL \times \cL$. Thus the assignment
$$q\mapsto \mathfrak{N}_q \df T_q( \supp \, \theta \cap \pi^{-1}(q))$$
defines a flat sub-bundle of $T(\cL)$. \combarak{here, pass to the set
  of non-normalized surfaces.} Since \equ{eq: not contained1}
fails, this bundle is not contained in the bundle $q \mapsto
\mathfrak{R}_{\cL}(q).$ According to the classification of flat
sub-bundles (see \cite[Thm. 5.1]{Wright field of definition}),
$\mathfrak{N}_q = T_q(\cL)$ for each $q$; that is $\dim \,
\mathfrak{N}_q = \dim \cL$. This implies $\dim \supp \, \theta   = 2 \dim
\cL$ and hence $\theta = m_{\cL} \times m_{\cL}$.
\end{proof}

}
  
  \section{A topological condition for Rel ergodicity}\label{sec:
    topological condition}
Let $Z_0 \subset Z$ be a subspace. 
We say that a translation surface $x$ is \textit{$Z_0$-stably 
  periodic} if it can be presented as a finite union of horizontal cylinders and the
$Z_0$-orbit of $x$ is well defined. Recall that a {\em horizontal
separatrix} is a horizontal leaf whose closure contains at least one
singularity, and it is a {\em horizontal saddle connection} if  its
closure contains two singularities. 
Then the condition of being $Z_ 0$-stably periodic is equivalent to
requiring that all horizontal separatrices starting 
at singular points are on horizontal saddle connections, and $Z_0$ preserves the
holonomy of every 
horizontal saddle connection on $x$.  In case $Z =Z_0$ is the full
real Rel group, we say that $x$ is {\em fully stably 
  periodic.} This is equivalent to saying that all horizontal separatrices starting
at singular points are on saddle connections, and all horizontal
saddle connections start and end at the same singularity. In
particular, for any cylinder $C$ on a fully stably periodic surface,
each boundary component of $C$ is made of saddle connections starting
and ending at the same singular point $\xi$; we say that the boundary
component {\em only sees singularity $\xi$}. For more information on
the real Rel action on surfaces which are horizontally completely
periodic, see \cite[\S 6.1]{HW}. 

\begin{prop} \label{prop: criterion}
Suppose $x$ is a surface which is $Z_0$-stably 
 periodic,
and $v \in Z_0$ moves two singularities $p$ and $q$ with respect to each
other. Suppose that $x$ contains two cylinders $C_1$ and $C_2$ 
that both only see singularity $p$ on one boundary component and only
see singularity $q$ on another boundary component. Finally suppose the
circumferences $c_1, c_2$ of these 
cylinders satisfy $\frac{c_1}{c_2} \notin \Q$. Then Case  \eqref{item: two}
of Theorem \ref{thm: main} does not hold for
$x$.  
  \end{prop}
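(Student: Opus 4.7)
The plan is to assume towards contradiction that Case \eqref{item: two} of Theorem \ref{thm: general} holds for $x$ and to exhibit a surface in $\mathfrak{R}_1(x)$ which cannot lie in $\overline{Z_0 x}$. So suppose there is a closed connected subgroup $Z_0 \subset Z_1 \subset Z_{\cL}$ whose complexification $\mathfrak{R}_1$ satisfies $\overline{Z_0 x} = \mathfrak{R}_1(x)$, with $\mathfrak{R}_1(x)$ of finite translation-invariant volume.

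My first step is to exploit the cylinder geometry. Each boundary component of $C_j$ is a concatenation of horizontal saddle connections that are loops at a single singularity (either $p$ or $q$), so its class in $H_1(S,\Sigma;\mathbb{Z})$ lies in the image of $H_1(S;\mathbb{Z})$. Any Rel class $a \in \mathfrak{R}_{\cL}$ lies in $\ker(\Res)$ and hence vanishes on absolute classes, so the horizontal holonomies of these boundaries---and in particular the circumferences $c_1, c_2$---are preserved on the $\mathfrak{R}_{\cL}$-orbit of $x$ wherever the cylinder structure persists. Moreover, for any two paths $\gamma_{pq,j}$ from $p$ to $q$ crossing $C_j$ ($j = 1, 2$), the difference $\gamma_{pq,1} - \gamma_{pq,2}$ is an absolute cycle, so $a(\gamma_{pq,1}) = a(\gamma_{pq,2})$. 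Writing $a = u + iu'$ with $u, u' \in Z_1$, and letting $\alpha: Z_1 \to \mathbb{R}$ be the common value $u \mapsto u(\gamma_{pq,j})$, I conclude that horizontal Rel by $u$ changes the twists of both $C_1$ and $C_2$ by the same $\alpha(u)$, and vertical Rel by $iu'$ changes both heights by the same $\alpha(u')$. By hypothesis $v$ moves $p, q$ horizontally with respect to each other, so $\alpha(v) \ne 0$.

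I would then specialize to $y_s := \Rel_{isv}(x)$ for small $s > 0$. Since $iv \in iZ_1 \subset \mathfrak{R}_1$, we have $y_s \in \mathfrak{R}_1(x)$, and continuity of the Rel flow places $y_s$ in any prescribed neighborhood of $x$. By the previous step, $y_s$ carries two cylinders of circumferences $c_1, c_2$ with heights $h_j + s\alpha(v)$. On the other hand, horizontal Rel preserves heights, so on $Z_0 x$ the heights of $C_1, C_2$ are identically $h_1, h_2$; continuity of cylinder invariants in period coordinates on the open locus where the cylinder decomposition persists extends this equality to every $y \in \overline{Z_0 x}$ sufficiently close to $x$. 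Since $c_1/c_2 \notin \mathbb{Q}$ forces $c_1 \ne c_2$, the identification between the two cylinders of circumferences $c_1, c_2$ on $y_s$ and those on a nearby element of $\overline{Z_0 x}$ is uniquely determined by circumferences, so heights must agree cylinder-by-cylinder: $h_j + s\alpha(v) = h_j$. This forces $s\alpha(v) = 0$, contradicting $s \ne 0$ and $\alpha(v) \ne 0$. Hence $y_s \notin \overline{Z_0 x}$, contradicting $\overline{Z_0 x} = \mathfrak{R}_1(x)$.

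The main delicacy I expect is the unambiguous matching of cylinders in the final step: if $x$ carries additional cylinders of circumference $c_1$ or $c_2$ one must control cylinder-permuting symmetries, and the hypothesis $c_1/c_2 \notin \mathbb{Q}$ does the essential work via $c_1 \ne c_2$. A cleaner fallback, if the continuity/matching argument becomes awkward, is to analyze the stabilizer $\Lambda := \{a \in \mathfrak{R}_1 : \Rel_a(x) = x \text{ in } \cH\}$ directly: finite volume of $\mathfrak{R}_1(x)$ would force $\Lambda$ to be full-rank in $\mathfrak{R}_1$, but the relation $\alpha(\Lambda) \subset c_1\mathbb{Z} \cap c_2\mathbb{Z} = \{0\}$, which is where the irrationality of $c_1/c_2$ enters decisively, confines $\Lambda$ to the proper subspace $\ker\alpha \oplus i\ker\alpha \subset \mathfrak{R}_1$, yielding the same contradiction.
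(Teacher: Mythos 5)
Your proof establishes the wrong containment and so cannot rule out Case \eqref{item: two}. You produce $y_s = \Rel_{isv}(x)$ lying in $\mathfrak{R}_1(x)$ but not in $\overline{Z_0 x}$, which shows $\overline{Z_0 x} \subsetneq \mathfrak{R}_1(x)$, i.e.\ that the equality $\overline{Z_0 q} = \mathfrak{R}_1(q)$ fails at $q = x$. But that equality is asserted in Case \eqref{item: two} only for $m_\cL$-a.e.\ $q$, and the surfaces supplied by Proposition \ref{prop:verifying crit} are stably periodic, hence lie in a Masur--Veech null set; your conclusion is therefore compatible with Case \eqref{item: two} holding, and the deduction of Theorem \ref{thm: main 1} would not go through. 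For a single explicit $x$ to rule out Case \eqref{item: two} one must instead contradict the first bullet, which requires $\mathfrak{R}_1(q)$ to be closed for \emph{every} $q \in \cL$: since $Z_0 x \subset \mathfrak{R}_1(x)$, closedness forces $\overline{Z_0 x} \subset \mathfrak{R}_1(x) \subset \mathfrak{R}_\cL(x)$, so what must be shown is that $\overline{Z_0 x}$ \emph{escapes} the Rel leaf of $x$, i.e.\ $\overline{Z_0 x} \not\subset \mathfrak{R}_\cL(x)$.

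This is what the paper proves, and the irrationality hypothesis is precisely what makes it work. Since $c_1/c_2 \notin \Q$, the real Rel orbit winds densely in the twist torus $\R/c_1\Z \times \R/c_2\Z$ of $C_1, C_2$, so $\overline{Z_0 x}$ contains, near $x$, surfaces whose twists of the two cylinders vary independently; in particular the horizontal period of the absolute cycle $\sigma$ obtained by concatenating saddle connections $\sigma_1, \sigma_2$ crossing $C_1, C_2$ between $p$ and $q$ takes a nontrivial range of values on $\overline{Z_0 x}$. Since Rel classes lie in $\ker\Res$ and hence kill absolute periods, these surfaces cannot all lie in a single Rel leaf. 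Your own observation that ``the hypothesis $c_1/c_2 \notin \Q$ does the essential work via $c_1 \neq c_2$'' is symptomatic: your argument nowhere uses the dense winding, and indeed $\overline{Z_0 x} \subsetneq \mathfrak{R}_1(x)$ holds for a stably periodic $x$ whether or not $c_1/c_2$ is rational. Both the height computation and the fallback lattice argument are sound as far as they go, but they only re-establish that the Rel leaf is too big relative to the orbit closure, which Case \eqref{item: two} already concedes on a null set.
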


  \begin{proof}
Since $\frac{c_1}{c_2} \notin \Q$, the trajectory $\{\rel_{tv}(x) : t \in
\R\}$ is not closed, let $\mathcal{L}$ denote its closure. We claim
that the tangent space to $\mathcal{L}$ is not contained in $Z$. Let
$\sigma_1$ denote a saddle connection from $p$ to $q$ in $C_1$ and
let $\sigma_2$ denote a saddle connection from $q$ to $p$ in
$C_2$. Let $\sigma $ be the concatenation. Then $\sigma$ represents an
absolute homology class because 
it goes from $p$ back to $p$, and it is nontrivial because the
vertical component of its holonomy on $x$ is nonzero.
If we consider the restriction of the rel-action to $C_1 \cup C_2$ then it only affects
the twist parameters, which is a 2-dimensional space. This space can
be generated by the horizontal holonomy of $\sigma_1$ and the
horizontal holonomy of $\sigma_2$. Since 
$\frac{c_1}{c_2} \notin \Q$, this 
restricted action does not give a closed orbit. So the tangent space
to $\mathcal{L}$
contains directions, which continuously affect the holonomy of
$\sigma$. Since $\sigma$ is an absolute period, we see that the
tangent space to $\mathcal{L}$ is not contained in $Z$. 
    \end{proof}

    \section{Checking the condition for strata}\label{sec: checking condition}
     Let $\cH = \cH(a_1, \ldots, a_k)$ and for $i, j \in \{1, \ldots,
    k\}$, let $\xi_i, \xi_j$ be the corresponding singular points of a
    surface in $\cH$. Let $z \in \mathfrak{R}$ be a Rel cohomology
    class. We say that {\em $z$ moves $\xi_i, \xi_j$ with respect to
      each other} if for some (equivalently, every) $\alpha \in H_1(S,
    \Sigma)$ represented by a path starting at $\xi_i$ and ending at
    $\xi_j$, we have $z(\alpha) \neq 0$. Below when we discuss a
    stratum $\mathcal{H}(a_1, \ldots, a_k)$ we 
    allow $a_i=0$, that is we allow marked points. We call points with
    cone angle $2\pi$ (that is, with $a=0$) {\em removable
      singularities}, and otherwise we call them {\em
      non-removable}. The following result, which clearly implies
    Theorem \ref{thm: main}, 
    allows strata with removable singularities.
    
    \begin{thm}\label{thm: main 1}
Let $\HH$ be a connected component of a stratum $\HH(a_1, \ldots,
a_k)$. Let $m_\HH$ be
the Masur-Veech measure on $\HH$, let $Z$ be the
corresponding real Rel foliation, and let $Z_0 \subset Z$ be a
one-dimensional connected subgroup of $Z$. Suppose that there are $1 \leq
i < j \leq k$ with corresponding singular
points $\xi_i, \xi_j$, such that $a_i>0, \, a_j>0$ and such that some
element of $Z_0$ moves $\xi_i, \xi_j$ with respect to each other. Then
the $Z_0$-flow on 
$(\HH, m_{\HH})$ is 
mixing of all orders (and in 
particular, ergodic). 
\end{thm}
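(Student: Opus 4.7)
The plan is to deduce Theorem \ref{thm: main 1} from Theorem \ref{thm: general} via Proposition \ref{prop: criterion}. Applying Theorem \ref{thm: general} with $\cL = \HH$ and the given $Z_0$ yields a dichotomy: either the $Z_0$-action on $(\HH, m_\HH)$ is mixing of all orders, or case \eqref{item: two} of Theorem \ref{thm: general} holds, in which case the leaf $\mathfrak{R}_1(q)$ is closed for \emph{every} $q \in \HH$. Since Proposition \ref{prop: criterion} shows case \eqref{item: two} must fail at any $x$ satisfying its hypotheses, exhibiting a single such $x \in \HH$ suffices to rule out the second alternative and conclude that the first holds.

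The task therefore reduces to constructing a surface $x$ lying in the prescribed component $\HH$, such that $x$ is $Z_0$-stably periodic and contains two horizontal cylinders $C_1, C_2$, each with one boundary component only seeing $\xi_i$ and the other only seeing $\xi_j$, whose circumferences $c_1, c_2$ satisfy $c_1/c_2 \notin \Q$. I would build $x$ via a horizontal cylinder decomposition in which \emph{every} horizontal saddle connection is a loop based at some singularity. This makes $Z_0$-stability automatic: any generator $v$ of $Z_0$ lies in $\Rel = \ker(\Res)$ and hence vanishes on absolute cycles, while a loop at a singularity is an absolute cycle, so $v(\sigma) = 0$ for every horizontal saddle connection $\sigma$ on $x$. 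The two special cylinders $C_1, C_2$ are then arranged with top boundary a single loop $\alpha_k$ at $\xi_i$ of length $c_k$ and bottom boundary a single loop $\beta_k$ at $\xi_j$ of the same length, with $c_1, c_2$ chosen to have irrational ratio.

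In the minimal case $\HH(1,1)$ a single auxiliary cylinder $D$ completes the picture: its bottom boundary is the concatenation $\alpha_1 \alpha_2$ of the two loops at $\xi_i$ and its top boundary is $\beta_1 \beta_2$, both closed curves of length $c_1 + c_2$; a direct cone-angle count shows that the resulting closed surface has exactly two singularities of cone angle $4\pi$ and genus $2$, so it lies in $\HH(1,1)$. For general strata, one inserts additional auxiliary cylinders, again bounded only by loops, to absorb the remaining $2(a_i - 1)$ separatrices at $\xi_i$, $2(a_j - 1)$ separatrices at $\xi_j$, and $2(a_l + 1)$ separatrices at each other singularity $\xi_l$. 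Because each loop length is a free parameter and the only combinatorial constraint is that the top and bottom circumferences of each cylinder agree, such a decomposition can always be realized.

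The main obstacle I foresee is not the existence of \emph{some} such $x$, but ensuring it lies in the \emph{prescribed} connected component of $\HH(a_1, \ldots, a_k)$, rather than in a sibling component of the same stratum. This may require a case analysis along the Kontsevich--Zorich classification, adjusting the combinatorial pattern of loops (their cyclic order at each singularity and their grouping onto the boundaries of the auxiliary cylinders) so as to realize the correct topological invariant of the target component, such as a hyperelliptic involution or a prescribed Arf invariant. Once $x$ is produced in the correct component, Proposition \ref{prop: criterion} applies and contradicts case \eqref{item: two} of Theorem \ref{thm: general}, so case \eqref{item: one} must hold and the $Z_0$-flow on $(\HH, m_\HH)$ is mixing of all orders.
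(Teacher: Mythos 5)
Your high-level plan matches the paper exactly: reduce Theorem \ref{thm: main 1} to Theorem \ref{thm: general} via Proposition \ref{prop: criterion}, and then produce, in the prescribed connected component, a fully stably periodic surface with two cylinders of incommensurable circumferences whose boundaries only see $\xi_i$ on one side and $\xi_j$ on the other. Your observation that full stable periodicity follows when every horizontal saddle connection is a loop at a single singularity (so that $v\in\ker(\Res)$ vanishes on it) is also the paper's observation. The $\HH(1,1)$ model you describe is essentially the paper's $Z$-shaped surface.

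However, the part you flag as ``the main obstacle'' is in fact a genuine gap, and it is precisely where the paper spends almost all of its effort in \S\ref{sec: checking condition}. Your claim that ``such a decomposition can always be realized'' because loop lengths are free parameters does not address the hard constraints: the gluing pattern must produce a closed surface with the prescribed list of cone angles, and, more seriously, it must land in the prescribed connected component. The paper handles this by stating the existence result as Proposition \ref{prop:verifying crit} and proving it by induction on $\sum a_i$, using two explicit slit surgeries: Lemma \ref{lem: basic surgery} (gluing in a slit torus, raising the orders of $p$ and $q$ by one each) for connected strata, and Lemma \ref{lem: a pair of surgeries} (gluing in a slit genus-two piece, raising both orders by two) which crucially comes in two variants of opposite spin parity, verified by computing turning indices of a modified symplectic basis via the Kontsevich--Zorich formula. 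Hyperelliptic components are hit by an explicit staircase surface, and non-hyperellipticity of the other construction is certified by Lindsey's three-cylinder criterion. Marked points are treated by a reduction to the unmarked case (Proposition \ref{prop:first step}). None of this is routine, and simply invoking ``a case analysis along Kontsevich--Zorich'' without a mechanism for toggling the spin or certifying (non)hyperellipticity leaves the existence statement unproved. Until you supply that machinery, you have only shown that \emph{some} component of each stratum contains the desired surface, not the given one.
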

   
 Clearly, Theorem \ref{thm: main 1} follows from Theorem \ref{thm: general},
 Proposition \ref{prop: criterion}, and the following result.

\begin{prop}\label{prop:verifying crit}
Let $\mathcal{H} \subset \mathcal{H}(a_1,\ldots,a_k)$ be a connected
component of a stratum of translation surfaces with at least two
non-removable singular points. If $p\neq q$ is any pair of
non-removable singularities then there exists $M \in \mathcal{H}$,
which has cylinders $C_1,C_2$ with circumferences $c_1,c_2$ so that  
\begin{enumerate}
\item \label{item: first} $M$ is fully stably periodic.
\item \label{item: second} $\frac{c_1}{c_2} \notin \mathbb{Q}$.
\item \label{item: third}  Both $C_1$ and $C_2$ only see singularity
  $p$ on one boundary 
  component and only see singularity $q$ on the other boundary
  component. 
\end{enumerate}

\end{prop}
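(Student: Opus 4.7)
The plan is to construct $M$ directly by explicit cylinder gluing, arranged so that enough discrete and continuous parameters remain free to hit any prescribed connected component of the stratum.

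For the central construction, I would introduce for each non-removable singularity $\xi_i$ a family of $a_i+1$ horizontal saddle-connection loops at $\xi_i$. Among the loops at $p$, distinguish two loops $\alpha_1, \alpha_2$ of lengths $c_1, c_2$ with $c_1/c_2 \notin \mathbb{Q}$, and among the loops at $q$ distinguish two loops $\beta_1, \beta_2$ of the same two lengths. Take $C_i$ to be the horizontal cylinder of circumference $c_i$ with top boundary $\alpha_i$ and bottom boundary $\beta_i$, for $i=1,2$. Take a single ``bridge'' cylinder $C_3$ whose top boundary cyclically concatenates $\beta_1, \beta_2$ with every remaining loop (at $p$, at $q$, and at each non-removable $\xi_r$ with $r \neq p,q$), and whose bottom boundary cyclically concatenates $\alpha_1, \alpha_2$ with the same remaining loops, each of which appears once on the top of $C_3$ and once on the bottom. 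The top and bottom lengths of $C_3$ agree (the difference collapses to $(c_1+c_2)-(c_1+c_2)=0$), each individual saddle connection is a loop at a single singularity (giving property (1)), properties (2)--(3) are immediate for $C_1, C_2$ by construction, a direct Euler-characteristic count gives the correct genus $g = 1 + \tfrac12\sum a_i$, and connectedness follows from the cylinder adjacency graph $C_1-C_3-C_2$ being connected. Each removable singularity is placed in the interior of $C_3$ (never inside $C_1$ or $C_2$); each such placement produces a new horizontal loop at the marked point and subdivides $C_3$ into two sub-cylinders, preserving full stable periodicity and leaving $C_1, C_2$ untouched.

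The main obstacle is ensuring $M$ lies in the specified connected component $\mathcal{H}$, not merely in the stratum $\mathcal{H}(a_1,\ldots,a_k)$. By the Kontsevich--Zorich classification, components of strata are distinguished by hyperellipticity and (when all $a_i$ are even) spin parity, both computable from the cylinder decomposition data. The construction above has ample freedom---the cyclic orderings of the saddle connections on the top and bottom of $C_3$, the placements of removable points, the lengths of the remaining loops, and the heights and twists of all cylinders, together with the option of replacing $C_3$ by several auxiliary cylinders---which suffices to realize every component invariant. For hyperelliptic components (which require $a_p = a_q$ and $k=2$), impose a cyclic ordering that makes $M$ invariant under a hyperelliptic involution swapping $p$ and $q$ (and correspondingly $\alpha_i \leftrightarrow \beta_i$, $C_1,C_2$ each invariant with reversed orientation). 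For spin components, apply the explicit Kontsevich--Zorich formula for the Arf invariant in terms of cylinder data and verify that both parities are achievable by adjusting the combinatorics of $C_3$; a short case analysis along the classification then finishes the proof.
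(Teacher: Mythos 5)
Your central construction has a fatal flaw rooted in the basic combinatorics of horizontal cylinder decompositions. On a translation surface, each boundary component of a horizontal cylinder is a \emph{single} closed curve built from saddle connections laid end to end; consecutive saddle connections on that component meet at a singularity, so they share that endpoint. If the surface is fully stably periodic (every horizontal saddle connection is a loop at one singularity), it follows that all saddle connections on a given boundary component of a cylinder are loops at \emph{the same} singularity --- exactly the observation recorded in \S\ref{sec: topological condition}: ``each boundary component of $C$ is made of saddle connections starting and ending at the same singular point.'' Your bridge cylinder $C_3$ is required to have a top boundary ``cyclically concatenating $\beta_1,\beta_2$ with every remaining loop at $p$, at $q$, and at each $\xi_r$,'' i.e.\ a single closed curve containing loops at several distinct singularities. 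That is impossible: the junction between a loop at $q$ and a loop at $p$ would force $p=q$. The construction therefore does not produce a translation surface except in the degenerate case with no extra loops, namely $\HH(1,1)$, where your picture collapses to the paper's $Z$-shaped surface. The Euler-characteristic check you mention is automatic once a consistent gluing exists and is not the issue; the issue is that the proposed gluing is locally inconsistent at the boundaries of $C_3$.

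The remark that one could ``replace $C_3$ by several auxiliary cylinders'' is where the entire difficulty of the proposition lives and cannot be left as an afterthought: one has to exhibit a valid cylinder graph with single-singularity boundary components and re-verify full stable periodicity, connectedness, the cone-angle bookkeeping at each $\xi_i$, the placement of removable points, and the spin/hyperellipticity invariants, uniformly across the classification in Proposition \ref{prop: KZ}. The paper does this inductively rather than by one global recipe: a base surface with a single cylinder joining $p$ and $q$ is built (Proposition \ref{prop:first step}), then modified by local slit-and-glue surgeries (Lemmas \ref{lem: basic surgery} and \ref{lem: a pair of surgeries}) that visibly stay fully stably periodic, create boundary components seeing only $p$ on one side and only $q$ on the other, and --- in the genus-two surgery --- allow the spin invariant to be toggled; hyperelliptic components are handled with explicit staircase surfaces. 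A direct construction of the kind you envision would essentially reproduce those surgery lemmas under another name.
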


  For the proof of Proposition \ref{prop:verifying crit} we will also
  need the following: 
\begin{prop}\label{prop:first step}
Let $\cH = \mathcal{H}(a_1,\ldots,a_k)$ be a stratum of translation surfaces
with at least two 
singular points (that is $k\geq 2$). If $p\neq q$ is any pair of distinct
singularities (possibly removable), then there exists $M \in
\mathcal{H}$, so that $M$ is fully stably 
periodic and there exists a cylinder on $M$ that only sees singularity
$p$ on one boundary component, and only sees singularity $q$ on the
other boundary component. 
\end{prop}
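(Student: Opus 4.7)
The plan is to exhibit $M$ explicitly via a horizontal cylinder decomposition. Setting $m_i = a_i + 1$, I first handle the case $k = 2$ by taking two horizontal cylinders $C_1, C_2$ of a common circumference $L$ and arbitrary positive heights. The top boundary of $C_1$ is the cyclic concatenation of $m_1$ horizontal saddle-connection loops at $p$, with lengths summing to $L$; its bottom is a cyclic concatenation of $m_2$ loops at $q$, again summing to $L$. The cylinder $C_2$ is then glued so that its top is identified pointwise with $C_1$'s bottom (hence carries the same cyclic sequence of $q$-loops) and its bottom is identified with $C_1$'s top (the same sequence of $p$-loops). Every horizontal saddle connection in the resulting surface is then a loop, so $M$ is fully stably periodic, and the cylinder $C_1$ sees only $p$ on its top and only $q$ on its bottom, as required.

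To confirm that $M \in \mathcal{H}(a_1, a_2)$, I would check the cone angle at each $\xi_i$: each of its $m_i$ loops has two $\xi_i$-endpoints at which $C_1$ and $C_2$ each contribute a wedge of angle $\pi$, giving $2m_i$ wedges and hence cone angle $2\pi m_i = 2\pi(a_i + 1)$. Connectedness is immediate since $C_1$ and $C_2$ share all of their saddle connections, and cutting each cylinder along a vertical arc joining singularities on top and bottom (which can always be arranged by an appropriate choice of horizontal positions of the corners) turns the decomposition into a CW-structure with disk faces; the resulting count $\chi = k - \sum m_i = -(a_1 + a_2) = 2 - 2g$ pins down the correct genus.

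For $k > 2$ I extend this construction separately for removable and non-removable additional singularities. If $\xi_i$ is removable ($a_i = 0$, so $m_i = 1$), I place $\xi_i$ as an interior marked point inside some cylinder other than $C_1$, chosen at a height distinct from those of all other singularities; the horizontal leaf through $\xi_i$ becomes a new saddle-connection loop at $\xi_i$ and splits the ambient cylinder into two sub-cylinders, while leaving $C_1$ and full stable periodicity intact. If $\xi_i$ is non-removable ($a_i \geq 1$, so $m_i \geq 2$), I adjoin an auxiliary cylinder $C_i$, partition the $m_i$ loops at $\xi_i$ into two nonempty subsets $S_i^{\mathrm{top}}, S_i^{\mathrm{bot}}$, insert the loops of $S_i^{\mathrm{top}}$ into $C_2$'s top boundary (simultaneously declaring them the bottom boundary of $C_i$) and those of $S_i^{\mathrm{bot}}$ into $C_2$'s bottom (as the top of $C_i$), and re-balance loop lengths so every cylinder's top- and bottom-boundary lengths agree. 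The same cone-angle and Euler-characteristic arguments then show $M \in \mathcal{H}(a_1, \ldots, a_k)$ with all the claimed properties preserved.

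The main obstacle is purely combinatorial bookkeeping: one must verify that every horizontal saddle connection appears as the top boundary of exactly one cylinder and the bottom boundary of exactly one cylinder, that the circumference constraints on all cylinders can be met simultaneously by a single positive choice of loop lengths (an elementary linear-algebra exercise with plenty of freedom, since each length is an independent continuous parameter), and that no auxiliary singularity is ever placed inside $C_1$, since that would split it and destroy its direct $p$-to-$q$ connection. Once the decomposition is in hand, full stable periodicity, the cone-angle count, and the Euler-characteristic check that $M$ lies in $\mathcal{H}(a_1, \ldots, a_k)$ follow immediately from the combinatorial design.
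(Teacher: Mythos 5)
There is a genuine gap: the ``identified pointwise'' gluings you prescribe do not produce a surface in $\mathcal{H}(a_1,\ldots,a_k)$. In your $k=2$ construction, the top of $C_1$ and the bottom of $C_2$ are the \emph{same} cyclically ordered collection of $m_1$ arcs, and you glue them by the identity. Then each of the $m_1$ transition points on that circle is met by exactly one wedge of angle $\pi$ from $C_1$ and one wedge of angle $\pi$ from $C_2$, so each transition point is a \emph{separate} cone point of angle $2\pi$. You obtain $m_1$ marked points where you wanted a single cone point of angle $2\pi(a_1+1)$, and likewise $m_2$ marked points in place of $q$. Concretely, for $\mathcal{H}(1,1)$ your construction is two equal-circumference rectangles stacked vertically with top glued to bottom by the identity: this is a flat torus with four marked points, not a genus-two surface. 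Your cone-angle paragraph counts the total angle summed over all the transition points labelled $\xi_i$, which is indeed $2\pi m_i$, but that total is distributed over $m_i$ distinct points; it is not the cone angle at a single vertex. The Euler-characteristic check is circular for the same reason: the formula $\chi=k-\sum m_i$ presupposes exactly $k$ vertices in the CW structure, whereas your gluing actually produces $\sum m_i$ vertices and hence $\chi=0$, $g=1$, regardless of the $a_i$.

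To make a two-cylinder (or few-cylinder) picture work, the gluing of boundary circles must include carefully chosen shears so that the link of each prospective singularity is a single circle of the right total angle; simply matching ``the same sequence of loops'' does not do this, and whether a given shear produces a connected link is a nontrivial combinatorial condition that also interacts with connected components of strata. This is precisely the bookkeeping the paper's proof is designed to avoid: it proceeds by induction on $\sum a_i$, using a base case where the ambient surface is a single horizontal cylinder in $\mathcal{H}(a_1)$ decorated with marked points, and an inductive step given by the explicit slit-and-glue surgery of Lemma~\ref{lem: basic surgery}, which inserts a torus along a slit through a prescribed cylinder and provably raises the orders of the two chosen singularities by one while keeping the surface fully stably periodic. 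That surgery is exactly where the shear and combinatorics are controlled (via the figures), which is the step your proposal omits. If you want to rescue a direct construction, you would need to specify the shear at each boundary circle and verify connectedness of the singular links case by case; as written, the construction lands in the wrong stratum whenever any $a_i>0$.
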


Propositions \ref{prop:verifying crit} and \ref{prop:first step} will
both be proved by induction, after 
some preparations. 

\begin{lem}[The basic surgery -- gluing in a torus] \label{lem: basic surgery}
  Let
  $\cH = \mathcal{H}(b_1, \ldots ,b_{\ell})$ be a 
  stratum of translation surfaces, and let $M \in
  \cH$, with singularities labeled by
  $\xi_1, \ldots, \xi_\ell$, so that the order of $\xi_i$ is
  $b_i$. Suppose $M$ has a horizontal cylinder $C$, with
  circumference $c$, where one boundary component is made of saddle
  connections that begin and end at $\xi_i$, and the other is made of
  saddle connections that begin and end at 
  $\xi_j$, where $b_i \geq 0$ and $b_j \geq 0$ (so that $\xi_i, \xi_j$
  might be removable). Then for all $w>0$ there exists $M' \in
  \mathcal{H}(b_1, \ldots ,b_{i}+1, \ldots,b_{j}+1, \ldots,b_\ell)$,
  with singularities labeled $\xi'_1, \ldots, \xi'_\ell$,  which has two
  horizontal cylinders $C'_1$, $C'_2$, where $C'_1$ has circumference $c+w$ and
  $C'_2$ has circumference $w$. The complements $M \sm C$ and $M' \sm
  (C_1 \cup C_2)$ are isometric, by an isometry mapping $\xi'_i$ to
  $\xi_i$ for all $i$.  The cylinders $C_1$ and $C_2$ only see
  singularity $\xi'_i$ on one boundary component, and $\xi'_j$ on
  another. Moreover, 
  if $M$ is fully stably periodic then so is $M'$. 
\end{lem}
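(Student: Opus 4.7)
The plan is to construct $M'$ explicitly by excising $C$ from $M$ and replacing it with a flat surface-with-boundary $P$ built from two cylinders glued along interior saddle connections. Label the horizontal saddle connections on $\partial_+ C$ by $\sigma_1, \ldots, \sigma_{k_i}$ (each with both endpoints at $\xi_i$, of total length $c$) and those on $\partial_- C$ by $\sigma_1^-, \ldots, \sigma_{k_j}^-$. First I would take $C_1'$ to be a flat cylinder of circumference $c+w$ and any height $h_1 > 0$; I would subdivide its top circle into $k_i+1$ arcs of lengths $|\sigma_1|, \ldots, |\sigma_{k_i}|, w$ appearing in the same cyclic order as the $\sigma_m$ on $\partial_+ C$ (the extra arc of length $w$ being a new saddle connection $\tau$), and similarly subdivide its bottom into $k_j+1$ arcs including one new arc $\tau'$ of length $w$. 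Then take $C_2'$ to be a flat cylinder of circumference $w$ and any height $h_2 > 0$, whose top and bottom boundary circles each consist of a single closed saddle connection. Glue the top of $C_2'$ isometrically to $\tau$ and its bottom to $\tau'$. This yields a flat piece $P$ whose remaining boundary is two circles of length $c$, combinatorially isometric to $\partial_\pm C$; I would then form $M'$ by attaching $P$ to $M \sm C$ via these isometries.

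The main verification is that $M'$ lies in the claimed stratum, which reduces to computing cone angles at $\xi_i'$ and $\xi_j'$ and checking that no new singularities are created. The cone angle at $\xi_i'$ is a sum of three contributions: from $M \sm C$ it is $(b_i+1)\cdot 2\pi - \pi k_i$ (the old angle at $\xi_i$ minus the $\pi k_i$ previously supplied by the top corners of $C$); from the $k_i+1$ boundary corners on the top of $C_1'$ it is $(k_i+1)\pi$; and from the single corner on the top of $C_2'$ it is $\pi$. The gluings identify all these corners into the single point $\xi_i'$, and the sum collapses to $(b_i+2)\cdot 2\pi$, as required. Interior points of each saddle connection on $\partial_\pm P$ glue to interior points of the matching $\sigma_m$ or $\sigma_m^-$, so they remain regular. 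The check at $\xi_j'$ is identical.

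The key combinatorial point --- and the only potential obstacle --- is matching the vertex structure on $\partial_\pm P$ with that of $\partial_\pm C$: the boundary of $C_1'$ must be subdivided to include one arc for each original $\sigma_m$ rather than a single long arc of length $c$, so that the $k_i$ corners at $\xi_i$ on $\partial_+(M\sm C)$ glue precisely onto the corners of $C_1'$ and not into the interior of a smooth saddle arc, which would create spurious singularities. With this combinatorial matching in place, no further difficulties arise. For the last clause of the lemma, if $M$ is fully stably periodic then $M \sm C$ is a union of horizontal cylinders whose boundary saddle connections each have both endpoints at a single singularity; joining these with $C_1'$ and $C_2'$, whose boundary components each see only $\xi_i'$ or $\xi_j'$ by construction, we conclude that $M'$ is fully stably periodic.
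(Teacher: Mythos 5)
Your proof is correct in substance, but it takes a genuinely different route from the paper's. The paper's proof is a \emph{slit-torus} construction in polygonal coordinates: it presents $M$ so that $C$ is a parallelogram $P$ with $\xi_i,\xi_j$ at adjacent corners and with a distinguished non-horizontal saddle connection $\sigma$ from $\xi_i$ to $\xi_j$ along the identified sides of $P$; it then slits $M$ along $\sigma$ and glues in a slit parallelogram $P'$ whose horizontal sides (length $w$) are identified to each other, whose non-horizontal sides are partly identified to each other (the portions $\gamma'_1\sim\gamma'_2$) and partly glued into the slit. With this description, the new cylinder structure (one cylinder of circumference $c+w$, one of circumference $w$), the order increase at $\xi_i,\xi_j$, and the fact that $M\sm C$ is untouched all fall out of the picture with almost no computation. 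You instead excise the entire cylinder $C$ and rebuild the flat piece from scratch as a union of two explicit flat cylinders $C_1',C_2'$ with carefully prescribed boundary combinatorics, and then verify stratum membership by summing cone angles at $\xi_i'$ and $\xi_j'$. Both are valid; the paper's is more economical (genus and order increase are visible at a glance, and the surgery being local to $C\cup\sigma$ is manifest), while yours makes the claimed isometry $M\sm C\cong M'\sm(C_1'\cup C_2')$ completely explicit, at the cost of needing the angle bookkeeping and the care you correctly flag about matching the arcs on $\partial C_1'$ with the saddle connections of $\partial_\pm C$.

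One small orientation slip: with $\tau$ sitting on the \emph{top} boundary of $C_1'$ (so $C_1'$ lies below $\tau$), the translation-compatible gluing forces the boundary circle of $C_2'$ attached to $\tau$ to be the one with $C_2'$ lying above it --- i.e.\ the \emph{bottom} of $C_2'$ --- and symmetrically the \emph{top} of $C_2'$ should be glued to $\tau'$. You have these reversed. This is a labeling issue only and does not affect the argument (your angle count of $\pi$ from $C_2'$'s corner, and everything downstream, is unchanged after relabeling).
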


\begin{proof}
  It will be easier to follow the proof while consulting Figures
  \ref{fig: first surgery, before} (before) and \ref{fig: first
    surgery, after} (after).
\begin{figure}
\begin{tikzpicture}
\draw (0,0)--(6,0)--(6,3)--(1,3)--(1.3,5)--(.3,5)--(0,3)--(0,0);
\node at (6,3) [circle, draw, fill=white, outer sep=0pt,  inner
sep=1.5pt]{};
\node at (6,0) [circle, draw, fill=black, outer sep=0pt,  inner
sep=1.5pt]{};
\node at (0,3) [circle, draw, fill=white, outer sep=0pt,  inner
sep=1.5pt]{};
\node at (0,0) [circle, draw, fill=black, outer sep=0pt,  inner
sep=1.5pt]{};
sep=1.5pt]{};
\node at (0,1.5) {$\triangle$};
\node at (6,1.5) {$\triangle$};
\draw[<->,dashed] (.05,1)--(5.95,1);
\node at (3,1.4) {$c$};
\end{tikzpicture}
\caption{The surface $M$ has a cylinder of circumference $c$, and its
  boundary components see only the singularities $\xi_i$ and $\xi_j$
  (denoted by $\circ$ and $\bullet$). The edges not labeled by
  $\triangle$ are connected to $M \sm C$. }\label{fig: first surgery, before}
\end{figure}
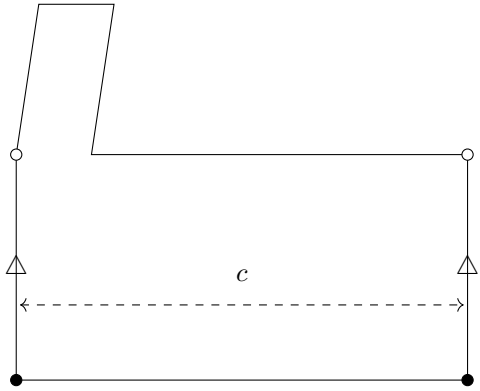
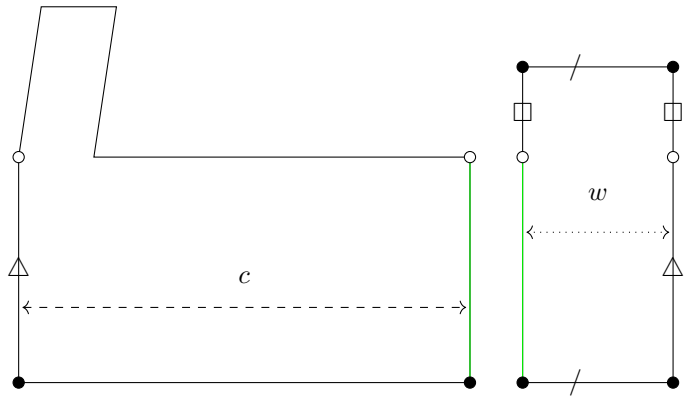
\begin{figure}
\begin{tikzpicture}
\draw (0,0)--(6,0)--(6,3)--(1,3)--(1.3,5)--(.3,5)--(0,3)--(0,0);
\draw (6.7,0)--(6.7,4.2)--(8.7,4.2)--(8.7,0)--(6.7,0);
\draw[green] (6,0)--(6,3);
\draw[green] (6.7,0)--(6.7,3);
\node at (6,3) [circle, draw, fill=white, outer sep=0pt,  inner
sep=1.5pt]{};
\node at (8.7,0) [circle, draw, fill=black, outer sep=0pt,  inner
sep=1.5pt]{};
\node at (8.7,3) [circle, draw, fill=white, outer sep=0pt,  inner
sep=1.5pt]{};
\node at (6,0) [circle, draw, fill=black, outer sep=0pt,  inner
sep=1.5pt]{};
\node at (8.7,4.2) [circle, draw, fill=black, outer sep=0pt,  inner
sep=1.5pt]{};
\node at (6.7,4.2) [circle, draw, fill=black, outer sep=0pt,  inner
sep=1.5pt]{};
\node at (6.7,3) [circle, draw, fill=white, outer sep=0pt,  inner
sep=1.5pt]{};
\node at (6.7,0) [circle, draw, fill=black, outer sep=0pt,  inner
sep=1.5pt]{};
\node at (0,3) [circle, draw, fill=white, outer sep=0pt,  inner
sep=1.5pt]{};
\node at (0,0) [circle, draw, fill=black, outer sep=0pt,  inner
sep=1.5pt]{};
\draw[<->,dashed] (.05,1)--(5.95,1);
\node at (3,1.4) {$c$};
\draw[<->,dotted] (6.75,2)--(8.65,2);
\node at (7.7,2.5) {$w$};
\node at (7.4,0) {/};
\node at (7.4,4.2) {/};
\node at (0,1.5) {$\triangle$};
\node at (8.7,1.5) {$\triangle$};
\node at (8.7,3.6){$\square$};
\node at (6.7,3.6){$\square$};
\end{tikzpicture}
\caption{To obtain $M'$ from $M$, glue in a torus (rectangle on the
  right). This transforms $C$ into a 
  cylinder $C'_1$ of circumference $c+w$, and adds a horizontal
  cylinder $C'_2$ of
  circumference $w$. Edges not labeled by 
  $\triangle$, $\square$, / or  the color green are attached to $M'
  \sm (C'_1 \cup C'_2)$. \label{fig: first surgery, after}}
\end{figure}
Given a polygonal presentation for $M$, we give a polygonal 
presentation for $M'$. Let $M$ be a polygon representation for $M$ in 
which the cylinder $C$ is represented by a parallelogram $P$ (in 
Figure \ref{fig: first surgery, before}, the large rectangle in the 
center of the presentation), with two 
horizontal sides of length $c$, non-horizontal sides identified to 
each other, and the singular points $\xi_i$, $\xi_j$ on adjacent 
corners of $P$. Thus the non-horizontal sides of $P$ represent a saddle 
connection $\sigma$ on $M$ connecting $\xi_i$ to $\xi_j$. We consider 
the two non-horizontal sides of $P$ as distinct and label them by 
$\sigma_1, \sigma_2$. Let $P'$ be a 
parallelogram with sides parallel to those of $P$, where the 
horizontal sides have length $w$ and the nonhorizontal sides are 
longer than the ones on $P$ (in Figure \ref{fig: first surgery,
  after}, $P'$ is to the right of  
$P$).

Label the two horizontal sides of $P'$ by
$h'_1$ and $h'_2$, and identify them by a translation. Partition the non-horizontal
sides of $P'$ into two segments. The segments $\sigma'_1, \sigma'_2$
are parallel to each other and have the same length as $\sigma_1,
\sigma_2$, and start at a corner of $P$. The segments $\gamma'_1,
\gamma'_2$ comprise the remainder of the non-horizontal sides of $P'$
(and in particular, have the same length). Identify $\gamma'_1$ to
$\gamma'_2$ by a translation, and identify $\sigma'_1, \sigma'_2$ to
$\sigma_1, \sigma_2$ by a translation so that each
$\sigma'_i$ is attached to the $\sigma_j$ with the opposite
orientation. Let $M'$ be the translation surface corresponding to
this presentation. It is clear that $M'$ has the required properties.

\end{proof}

\begin{proof}[Proof of Proposition \ref{prop:first step}]
  The proof is by induction on $\sum a_i$. 
  
\textbf{Base of induction:} 
  The base case is the stratum $\HH(a_1,
0^{s})$, that is, one singular point (removable or non-removable) of
order $a_1$, and some number $s \geq 1$ of removable singular
points. In this case we take 
a surface in $\HH(a_1)$ which is made of one horizontal cylinder. We
label the singular point by $\xi_1$ and 
place additional removable singular points $\xi_2, \ldots, \xi_{s+1}$ in
the interior of the cylinder, at different heights (so that the
resulting surface has no horizontal saddle connections between
distinct singularities) and so that $\xi_i$ and $\xi_j$ are on
opposite sides of a cylinder.

\medskip

\textbf{Inductive step:} 
Suppose $\HH' = \HH(a_1, \ldots, a_k)$ is our stratum, where at least two of the
singularities are non-removable. Let $p', q'$ be labels of singular points for surfaces in
$\HH'$, corresponding to indices $i \neq j$. To simplify notation
assume $i=1, j=2$. There are three cases to
consider: $a_i = a_j=0$, or one of $a_i, a_j$ are positive, or both
are positive.

If $a_i = a_j=0$ then by assumption $k \geq  4$. We take a cylinder $C$ on a fully
stably completely periodic surface $M$ in $\HH = \HH(a_1, \ldots,
\hat{a}_i, \ldots, \hat{a}_j, \ldots, a_k)$. The notation $\hat{a}_i$
means that the symbol should be ignored; that is on a stratum of
the same genus with $k-2 \geq 2$ singular points obtained by removing two
removable singular points. We place two singular points marked $p',
q'$ in the interior of $C$ at different heights. If $a_i>0$ and $ a_j=0$
is zero we take a fully stably periodic surface $M$ in $\HH(a_1,
\ldots, a_i-1, \ldots, \hat{a}_j, \ldots, a_k)$, find a cylinder $C$
on $M$ whose boundary component is made of saddle connections starting
and ending at $\xi_i$, place a marked point labeled $\xi_j$ in the
interior of $C$. If $a_i$ and $a_j$ are both positive we use the
induction hypothesis to find a surface $M \in \HH(a_1, \ldots, a_i-1,
\ldots, a_j-1, \ldots, a_k)$ with a cylinder whose boundary components
see $\xi_i$ and $\xi_j$, and we perform the surgery in Lemma \ref{lem:
  basic surgery} to 
this cylinder. 
\end{proof}

\begin{lem}[Two surgeries involving genus two surfaces] \label{lem: a pair of surgeries}
  Let $\HH = \mathcal{H}(b_1, \ldots,b_k)$ be a 
  stratum of translation surfaces and let $M \in \HH$ have a
  horizontal cylinder $C$, with circumference 
  $c$. Let $p$ and $q$ be singular points with order $b_i, b_j$
  respectively, such that one boundary component of $C$ only sees
  singularity $p$ and the 
  other only sees singularity $q$. Then for any
  $w_1,w_2>0$ there exists $M' \in 
  \HH' = \mathcal{H}(b_1, \ldots,b_i+2, \ldots,b_j+2, \ldots,b_k)$ which has three
  cylinders $C_1, C_2, C_3$ with circumferences $c+w_1+w_2, w_1$ and
  $w_2$ respectively.  The complements $M \sm C$ and $M' \sm (C_1 \cup
  C_2 \cup C_3)$ are isometric by an isometry preserving the labels of
  singular points,  and $C_1, C_2, C_3$ all have one boundary
  component that sees only $p$, and another that sees only $q$. Thus,
  if $M$ is fully stably 
   periodic so is $M'$. Moreover,  if the $b_i$ are all even, so that
   $\HH' $ has even and odd spin 
  components, we can choose $M'$ to be in either the even or odd
  connected component. 
\end{lem}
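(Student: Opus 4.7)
The plan is to apply the basic surgery of Lemma~\ref{lem: basic surgery} twice in succession. Starting from $M \in \mathcal{H}(b_1,\ldots,b_k)$ with cylinder $C$ of circumference $c$, first apply the basic surgery to $C$ with width parameter $w_1$, producing a surface $M_1 \in \mathcal{H}(b_1,\ldots,b_i+1,\ldots,b_j+1,\ldots,b_k)$. By the conclusion of Lemma~\ref{lem: basic surgery}, the surgery replaces $C$ by an extended horizontal cylinder $C_1^{(1)}$ of circumference $c+w_1$ and a new horizontal cylinder $C_2^{(1)}$ of circumference $w_1$, and each of $C_1^{(1)}, C_2^{(1)}$ has one boundary component seeing only the image of $p$ and the other seeing only the image of $q$. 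Now apply the basic surgery a second time, this time to $M_1$ using the cylinder $C_1^{(1)}$ and parameter $w_2$. The output is a surface $M' \in \mathcal{H}(b_1,\ldots,b_i+2,\ldots,b_j+2,\ldots,b_k)$ containing three horizontal cylinders: $C_1$ of circumference $c+w_1+w_2$, $C_2$ of circumference $w_2$, and the inherited cylinder $C_3 \df C_2^{(1)}$ of circumference $w_1$, which was untouched by the second surgery since it lies in the complement of $C_1^{(1)}$.

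The enumerated conclusions are then immediate. Composing the two instances of the label-preserving isometry of complements provided by Lemma~\ref{lem: basic surgery} yields an isometry $M \sm C \cong M_1 \sm (C_1^{(1)} \cup C_2^{(1)}) \cong M' \sm (C_1 \cup C_2 \cup C_3)$ preserving singularity labels outside the surgery region. By the same lemma, each of $C_1, C_2, C_3$ has one boundary component seeing only $p$ and the other only $q$, and full stable periodicity is preserved by each application of the basic surgery and hence by their composition.

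For the spin-component assertion, assume all $b_\ell$ are even, so that $b_i+2,\, b_j+2$ are also even and $\mathcal{H}' \df \mathcal{H}(b_1,\ldots,b_i+2,\ldots,b_j+2,\ldots,b_k)$ has both an even and an odd spin component. The plan is to exhibit a pair of variants $M'_{+}$ and $M'_{-}$ of the above construction, differing only by a local modification of the second surgery --- concretely, in the second application of Lemma~\ref{lem: basic surgery} one replaces the identification between the non-horizontal sides of the inserted parallelogram $P'$ and the corresponding sides along $C_1^{(1)}$ by a translation shifted by half of $c+w_1$ along the horizontal direction (an unrelated half-period twist that still produces a valid translation surface in $\mathcal{H}'$) --- and to verify that the Arf invariants of the winding-number quadratic forms on $H_1(M'_{\pm};\mathbb{Z}/2)$ differ by $1$. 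Concretely, one extends a fixed symplectic basis of the complement $M \sm C$ by the four new pairs (core, transversal) associated to the two inserted cylinders; the terms in the Arf invariant coming from the complement and from the horizontal core loops (whose winding numbers vanish) are identical for $M'_+$ and $M'_-$, while the winding number of the second transversal picks up the half-period shift and changes by exactly $1$ modulo $2$.

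The main obstacle is precisely this last spin-parity calculation: verifying that the proposed local modification shifts the Arf invariant by an odd amount requires careful mod~$2$ bookkeeping of the winding numbers of the transversal curves, which depends on how these curves close up through the global geometry of $M \sm C$. Everything else in the statement reduces to a routine double application of Lemma~\ref{lem: basic surgery}.
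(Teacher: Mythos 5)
Your double-application of Lemma~\ref{lem: basic surgery} is a valid and perhaps cleaner route to the \emph{main} assertions of the lemma. Applying the basic surgery to $C$ with parameter $w_1$ and then to the resulting long cylinder with parameter $w_2$ does land in the correct stratum $\mathcal{H}(b_1,\ldots,b_i+2,\ldots,b_j+2,\ldots,b_k)$, does produce three cylinders of circumferences $c+w_1+w_2$, $w_1$, $w_2$ each seeing only $p$ on one side and only $q$ on the other, and composing the two label-preserving isometries of complements gives $M\sm C \cong M'\sm(C_1\cup C_2\cup C_3)$. The paper instead performs a single slit-and-glue of a genus-two piece (built from three rectangles); the two constructions give the same conclusions for everything except the spin statement. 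That difference is exactly where your proposal breaks down.

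For the spin component, the proposed modification cannot work. You suggest shifting the identification of the second inserted parallelogram by half a horizontal period of $C_1^{(1)}$. There are two ways to read this, and both fail. If it means sliding the slit's location inside $C_1^{(1)}$, that is literally a no-op: a cylinder has a horizontal translation symmetry, so any two slit positions give isometric surfaces. If it means shearing the inserted piece or its gluing by some amount, that is a \emph{continuous} path of translation surfaces in $\mathcal{H}'$ (vary the shift from $0$ to $(c+w_1)/2$), and the spin/Arf invariant is locally constant on strata; hence the turning indices and the Arf sum cannot change. Your own last paragraph essentially concedes the ``mod~$2$ bookkeeping'' is unverified, but the issue is worse than bookkeeping: no continuous deformation of a double basic surgery can hit both components. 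Indeed the composite of two torus surgeries has no discrete topological freedom — there is essentially one gluing pattern up to deformation. The paper's proof gets around this precisely by using a genus-two surgery with two \emph{combinatorially} distinct gluings of the three rectangles (Figures~\ref{fig: second surgery A} and \ref{fig: second surgery B}); these are not connected by a path of gluings, and a turning-index computation à la Kontsevich–Zorich, using a symplectic basis as in Figures~\ref{fig: symplectic basis1} and \ref{fig: symplectic basis2}, shows they change the parity of $\sum (1+\mathrm{ind}\,\alpha_i)(1+\mathrm{ind}\,\beta_i)$ by different amounts. You would need to replace your twist idea by such a genuinely discrete alternative gluing to make the spin part go through.
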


\begin{proof}
Once again we encourage the reader to consult Figures \ref{fig: second surgery A} and
\ref{fig: second surgery B}.

\begin{figure}
\begin{tikzpicture}
\draw (0,0)--(6,0)--(6,3)--(1,3)--(1.3,5)--(.3,5)--(0,3)--(0,0);
\draw (6.7,0)--(8.7,0)--(8.7,-1)--(10,-1)--(10,3)--(8.7,3)--(8.7,4.2)--(6.7,4.2)--(6.7,0);
\draw[green] (6,0)--(6,3); 
\draw[green] (6.7,0)--(6.7,3);
\node at (6,3) [circle, draw, fill=white, outer sep=0pt,  inner
sep=1.5pt]{};
\node at (8.7,0) [circle, draw, fill=black, outer sep=0pt,  inner
sep=1.5pt]{};
\node at (8.7,3) [circle, draw, fill=white, outer sep=0pt,  inner
sep=1.5pt]{};
\node at (6,0) [circle, draw, fill=black, outer sep=0pt,  inner
sep=1.5pt]{};
\node at (8.7,4.2) [circle, draw, fill=black, outer sep=0pt,  inner
sep=1.5pt]{};
\node at (6.7,4.2) [circle, draw, fill=black, outer sep=0pt,  inner
sep=1.5pt]{};
\node at (0,3) [circle, draw, fill=white, outer sep=0pt,  inner
sep=1.5pt]{};
\node at (10,3) [circle, draw, fill=white, outer sep=0pt,  inner
sep=1.5pt]{};
\node at (0,0) [circle, draw, fill=black, outer sep=0pt,  inner
sep=1.5pt]{};
\node at (10,0) [circle, draw, fill=black, outer sep=0pt,  inner
sep=1.5pt]{};
\node at (6.7,0) [circle, draw, fill=black, outer sep=0pt,  inner
sep=1.5pt]{};
\node at (6.7,3) [circle, draw, fill=white, outer sep=0pt,  inner
sep=1.5pt]{};
\draw[<->,dashed] (.05,1)--(5.95,1);
\node at (3,1.4) {$c$};
\draw[<->,dotted] (6.75,2)--(8.6,2);
\draw[<->] (8.75,.5)--(9.95,.5);
\node at (9.5,.8) {$w'$};
\node at (7.75,2.3) {$w$};
\node at (7.7,0) {/};
\node at (7.7,4.2) {/};
\node at (9.3,-1){//};
\node at (9.3,3) {//};
\node at (0,1.5) {$\triangle$};
\node at (10,1.5) {$\triangle$};
\node at (8.7,3.6){$\square$};
\node at (6.7,3.6){$\square$};
\node at (8.7,-.5){$\nabla$};
\node at (10,-.5){$\nabla$};
\end{tikzpicture}
\caption{First option for $M'$ in Lemma \ref{lem: a pair of
    surgeries}. Attaching the subsurface on the right increases the
  genus by 2. Unlabeled edges are
  attached to $M' \sm (C_1 \cup C_2 \cup C_3)$. } \label{fig: second surgery A}
\end{figure}
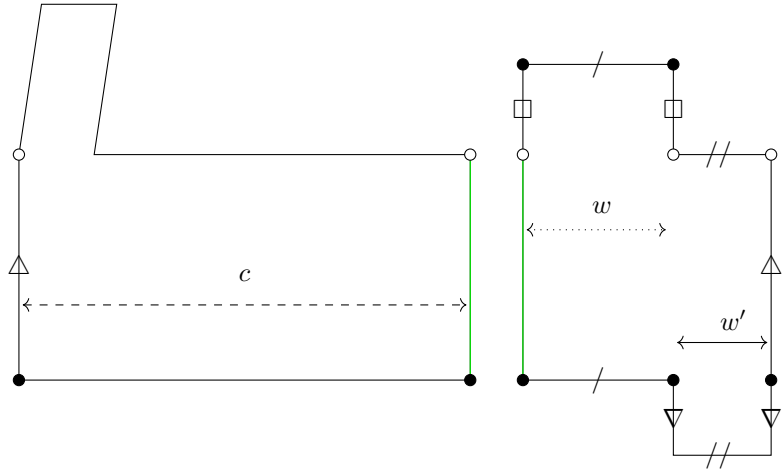

\begin{figure}
\begin{tikzpicture}
\draw (0,0)--(6,0)--(6,3)--(1,3)--(1.3,5)--(.3,5)--(0,3)--(0,0);
\draw (6.7,0)--(8.7,0)--(8.7,-1)--(10,-1)--(10,4.2)--(8,4.2)--(8,3)--(6.7,3);
\draw[green] (6,0)--(6,3);
\draw[green] (6.7,0)--(6.7,3);
\node at (6,3) [circle, draw, fill=white, outer sep=0pt,  inner
sep=1.5pt]{};
\node at (8.7,0) [circle, draw, fill=black, outer sep=0pt,  inner
sep=1.5pt]{};
\node at (6,0) [circle, draw, fill=black, outer sep=0pt,  inner
sep=1.5pt]{};
\node at (10,4.2) [circle, draw, fill=black, outer sep=0pt,  inner
sep=1.5pt]{};
\node at (0,3) [circle, draw, fill=white, outer sep=0pt,  inner
sep=1.5pt]{};
\node at (0,0) [circle, draw, fill=black, outer sep=0pt,  inner
sep=1.5pt]{};
\node at (8.7,-1) [circle, draw, fill=white, outer sep=0pt,  inner
sep=1.5pt]{};
\node at (10,-1) [circle, draw, fill=white, outer sep=0pt,  inner
sep=1.5pt]{};
\node at (8,4.2) [circle, draw, fill=black, outer sep=0pt,  inner
sep=1.5pt]{};
\node at (10,0) [circle, draw, fill=black, outer sep=0pt,  inner
sep=1.5pt]{};
\node at (8,3) [circle, draw, fill=white, outer sep=0pt,  inner
sep=1.5pt]{};
\node at (10,3) [circle, draw, fill=white, outer sep=0pt,  inner
sep=1.5pt]{};
\draw[<->,dashed] (.05,1)--(5.95,1);
\node at (6.7,0) [circle, draw, fill=black, outer sep=0pt,  inner
sep=1.5pt]{};
\node at (6.7,3) [circle, draw, fill=white, outer sep=0pt,  inner
sep=1.5pt]{};
\node at (3,1.25) {$c$};
\draw[<->,dotted] (6.75,2)--(8.65,2);
\draw[<->] (8.75,.5)--(9.95,.5);
\node at (9.4,.8) {$w'$};
\node at (7.7,2.3) {$w$};
\node at (7.7,0) {/};
\node at (9,4.2) {/};
\node at (9.4,-1){//};
\node at (7.5,3) {//};
\node at (0,1.5) {$\triangle$};
\node at (10,1.5) {$\triangle$};
\node at (8.7,-.5){$\nabla$};
\node at (10,-.5){$\nabla$};
\node at (10,3.6){$\square$};
\node at (8,3.6){$\square$};
\end{tikzpicture}
\caption{Second option for $M'$, with a different spin. }
 \label{fig: second surgery B}

\end{figure}
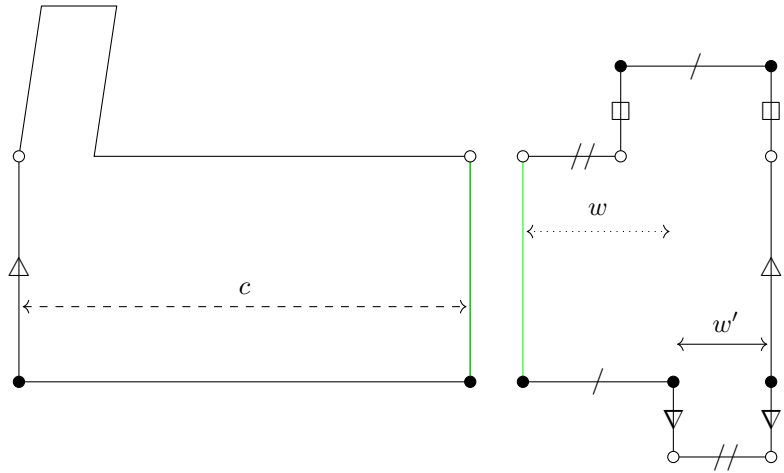

In Lemma \ref{lem: basic surgery} we made a slit in $M$, running through $P$
from top to bottom, and glued in a torus with a slit. In this case we
make an identical slit, this time gluing in a genus two surface with a
slit. This surface is presented in Figures \ref{fig: second surgery A}
and \ref{fig: second surgery B} as made 
up of three rectangles. It is straightforward to check that 
 $M' \in \HH'$ and that it has cylinders satisfying the desired
 properties. It remains to check the final
assertion about the parity of the spin structure.

Recall from \cite[eqn. (4)]{KZ} that where defined, the spin structure of a
surface $M$ of genus $g$ can be computed as
follows. Let $\alpha_i, \beta_j $ (where $1 \leq i,j \leq g$) be a
symplectic basis for $H_1(M)$, realized explicitly as smooth
curves on $M$. This means that all of these curves are disjoint,
except for $\alpha_i$ and $\beta_i$ which intersect once. For each
curve $\gamma$, let $\mathrm{ind}(\gamma)$ be the turning index, that
is the total number of circles made by the tangent vector to $\gamma$,
as one goes around $\gamma$. The parity
of $M$ is then the parity of the integer $\sum_{i=1}^g (1 +\mathrm{ind}(\alpha_i)) (1
+\mathrm{ind}(\beta_i)) $. It is shown in \cite{KZ} that this number
is well-defined (independent of the choice of the symplectic basis)
when all the singular points have even order.

Suppose $M$ has genus $g$ and is equipped with a symplectic
basis. Since any non-separating simple closed curve can be completed
to a symplectic basis, we 
can assume that $\alpha_1$ is the core curve of $C$, and the other
curves in the basis do not intersect the saddle connection from $p$ to $q$ passing
through $C$. We construct a symplectic basis for $M'$ in both cases,
by modifying $\alpha_1$, keeping $\alpha_2, \ldots, \alpha_g, \beta_1,
\ldots, \beta_g$, and adding new curves $\alpha_{g+1},
\alpha_{g+2}, \beta_{g+1}, \beta_{g+2}$. The modified curves are shown
in Figures \ref{fig: symplectic basis1}, \ref{fig: symplectic basis2}, and the
reader can easily check that these 
new curves still form a symplectic basis, and that these two choices
add two  numbers of different parities to the spin structure. 
  \end{proof}

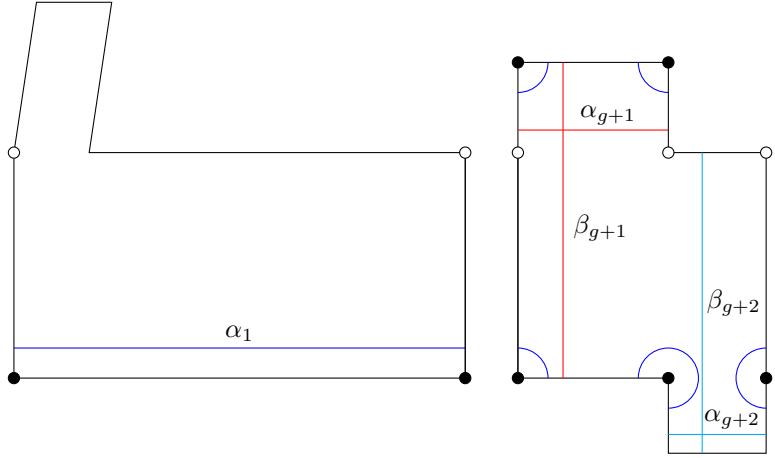
\begin{figure}
\begin{tikzpicture}
\draw (0,0)--(6,0)--(6,3)--(1,3)--(1.3,5)--(.3,5)--(0,3)--(0,0);
\draw (6.7,0)--(8.7,0)--(8.7,-1)--(10,-1)--(10,3)--(8.7,3)--(8.7,4.2)--(6.7,4.2)--(6.7,0);
\draw (6,0)--(6,3); 
\draw (6.7,0)--(6.7,3);
\node at (6,3) [circle, draw, fill=white, outer sep=0pt,  inner
sep=1.5pt]{};
\node at (8.7,0) [circle, draw, fill=black, outer sep=0pt,  inner
sep=1.5pt]{};
\node at (8.7,3) [circle, draw, fill=white, outer sep=0pt,  inner
sep=1.5pt]{};
\node at (6,0) [circle, draw, fill=black, outer sep=0pt,  inner
sep=1.5pt]{};
\node at (8.7,4.2) [circle, draw, fill=black, outer sep=0pt,  inner
sep=1.5pt]{};
\node at (6.7,4.2) [circle, draw, fill=black, outer sep=0pt,  inner
sep=1.5pt]{};
\node at (0,3) [circle, draw, fill=white, outer sep=0pt,  inner
sep=1.5pt]{};
\node at (10,3) [circle, draw, fill=white, outer sep=0pt,  inner
sep=1.5pt]{};
\node at (0,0) [circle, draw, fill=black, outer sep=0pt,  inner
sep=1.5pt]{};
\node at (10,0) [circle, draw, fill=black, outer sep=0pt,  inner
sep=1.5pt]{};
\node at (6.7,0) [circle, draw, fill=black, outer sep=0pt,  inner
sep=1.5pt]{};
\node at (6.7,3) [circle, draw, fill=white, outer sep=0pt,  inner
sep=1.5pt]{};
\draw[blue] (0,0.4)--(6,0.4);
\draw[red] (6.7, 3.3)--(8.7,3.3);
\draw[cyan] (8.7, -.75)--(10,-.75);
\draw[cyan] (9.15, 3)--(9.15,-1);
\draw[red] (7.3, 4.2)--(7.3, 0);
\node at (7.9, 3.5) {$\alpha_{g+1}$};
\node at (7.8, 2) {$\beta_{g+1}$};
\node at (9.57, 1) {$\beta_{g+2}$};
\node at (9.55, -.55) {$\alpha_{g+2}$};
\node at (3,.6) {$\alpha_1$};
\draw[blue] (6.7, 0.4) arc (90:0:0.4);
\draw[blue] (7.1, 4.2) arc (0:-90:0.4);
\draw[blue] (8.7, 3.8) arc (-90:-180:0.4);
\draw[blue] (8.7, -0.4) arc (270:540:0.4);
\draw[blue] (10, -0.4) arc (270:90:0.4);
\end{tikzpicture}
\caption{Modifying the symplectic basis. Gluings as in Figure \ref{fig: second surgery A}.} \label{fig:
  symplectic basis1}
\end{figure}

\begin{figure}
\begin{tikzpicture}
\draw (0,0)--(6,0)--(6,3)--(1,3)--(1.3,5)--(.3,5)--(0,3)--(0,0);
\draw (6.7,0)--(8.7,0)--(8.7,-1)--(10,-1)--(10,4.2)--(8,4.2)--(8,3)--(6.7,3);
\draw (6,0)--(6,3);
\draw (6.7,0)--(6.7,3);
\node at (6,3) [circle, draw, fill=white, outer sep=0pt,  inner
sep=1.5pt]{};
\node at (8.7,0) [circle, draw, fill=black, outer sep=0pt,  inner
sep=1.5pt]{};
\node at (6,0) [circle, draw, fill=black, outer sep=0pt,  inner
sep=1.5pt]{};
\node at (10,4.2) [circle, draw, fill=black, outer sep=0pt,  inner
sep=1.5pt]{};
\node at (0,3) [circle, draw, fill=white, outer sep=0pt,  inner
sep=1.5pt]{};
\node at (0,0) [circle, draw, fill=black, outer sep=0pt,  inner
sep=1.5pt]{};
\node at (8.7,-1) [circle, draw, fill=white, outer sep=0pt,  inner
sep=1.5pt]{};
\node at (10,-1) [circle, draw, fill=white, outer sep=0pt,  inner
sep=1.5pt]{};
\node at (8,4.2) [circle, draw, fill=black, outer sep=0pt,  inner
sep=1.5pt]{};
\node at (10,0) [circle, draw, fill=black, outer sep=0pt,  inner
sep=1.5pt]{};
\node at (8,3) [circle, draw, fill=white, outer sep=0pt,  inner
sep=1.5pt]{};
\node at (10,3) [circle, draw, fill=white, outer sep=0pt,  inner
sep=1.5pt]{};
\node at (6.7,0) [circle, draw, fill=black, outer sep=0pt,  inner
sep=1.5pt]{};
\node at (6.7,3) [circle, draw, fill=white, outer sep=0pt,  inner
sep=1.5pt]{};
\draw[blue] (0,0.4)--(6,0.4);
\draw[blue] (6.7, 0.4) arc (90:0:0.4);
\draw[blue] (8.4, 4.2) arc (0:-90:0.4);
\draw[blue] (10, 3.8) arc (-90:-180:0.4);
\draw[blue] (8.7, -0.4) arc (270:540:0.4);
\draw[blue] (10, -0.4) arc (270:90:0.4);
\node at (3,.6) {$\alpha_1$};
\draw[red] (8, 3.6)--(10,3.6);
\node at (9.3, 3.8) {$\alpha_{g+1}$};
\node at (7.8, 0.7) {$\beta_{g+1}$};
\node at (9.55, 1.55) {$\beta_{g+2}$};
\draw[cyan] (8.7, -.8)--(10,-.8);
\draw [red] plot [smooth, tension=1] coordinates {(8.55, 4.05)
  (8.55,4) (8.5,2.5) (7.5,1.5) (7.3,0)};
\draw [cyan] plot [smooth, tension=1] coordinates {(9.6,-1)
  (9.6,-.4) 
  (9.1,1.5) (9.7, 2.9) (10,3.2)};
\draw [cyan] plot [smooth, tension=1] coordinates {(8,3.2)
  (8.2,3.1) 
  (7.95, 2.75) (7.7,3)};
\node at (9.15, -.6) {$\alpha_{g+2}$};
\end{tikzpicture}
\caption{Modifying the  symplectic basis, second case. Gluings as in
  Figure \ref{fig: second surgery B}. Note the change in the
  rotation number of $\beta_{g+2}$.}
 \label{fig: symplectic basis2}

\end{figure}
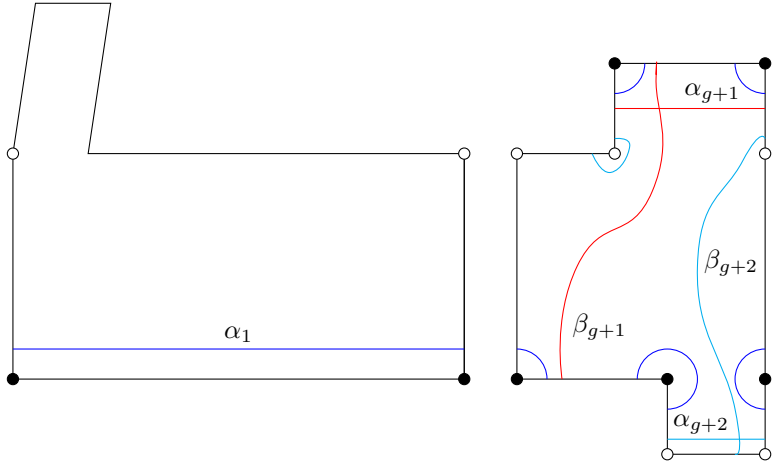

Note that in Proposition \ref{prop:verifying crit} we care about all
connected components of strata. We need to record some
information about the classification of connected components of
strata, due to Kontsevich and Zorich. A
translation surface is {\em hyperelliptic} if it 
admits an involution which acts on absolute homology as
$-\mathrm{Id}$ (see \cite{FM} or \cite[\S 2.1]{KZ} for
more details). A connected
component of a stratum is {\em hyperelliptic} if all surfaces in the
component are hyperelliptic. 

\begin{prop}[\cite{KZ}, Theorems 1 \& 5 and Corollary 5 of Appendix B] \label{prop: KZ}
  Let $\HH(a_1, \ldots, a_k)$ be a stratum with $a_i>0$ for all
  $i$. The following holds: 
  \begin{itemize}

    \item $\HH$ has three connected components in the following
      cases:
      \begin{itemize} \item $k=1, a_1 = 2g-2, g \geq 4.$
        \item $k=2$, $a_1 = a_2 = g-1$, $g \geq 5$ is odd. One is
          hyperelliptic, and the two non-hyperelliptic strata are
          distinguished by the spin invariant. 
              \end{itemize}
 \item
    $\HH$ has two connected components in the following cases:
    \begin{itemize} \item All of the $a_i$ are even, $g \geq
      4$, and either $k \geq 3$ or $a_1 > a_2$. The components are
      distinguished by their spin.  
    \item
      $a_1 = a_2$ and $g$ is either $3$ or is even. One of the
      components is hyperelliptic and the 
      other is not. When $g=3$ the hyperelliptic component is even,
      and the other one is odd. 
    \end{itemize}
      \item
        $\HH$ is connected in all other cases. 
    \end{itemize}

  \end{prop}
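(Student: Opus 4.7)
The plan is to follow the original Kontsevich--Zorich strategy by isolating two topological invariants, showing each is locally constant in period coordinates, exhibiting surfaces realizing every possible value combination, and finally showing that any two surfaces sharing the same invariants can be joined by a path in the stratum. First I would define the \emph{hyperelliptic locus} (the set of $(M,\omega)\in\HH$ admitting a holomorphic involution acting as $-\mathrm{Id}$ on $H^1(M;\R)$) and the \emph{spin parity} (defined via the Arf invariant of a $\Z/2$-quadratic form on $H_1(M;\Z/2)$ built from turning numbers, as recalled in the proof of Lemma \ref{lem: a pair of surgeries}). Both are locally constant: the Arf invariant takes values in $\Z/2$ and the formula in \cite[eqn.\ (4)]{KZ} shows it is unchanged by small period deformations (with all $a_i$ even to ensure well-definedness); the hyperelliptic locus is both open and closed because an involution inducing $-\mathrm{Id}$ on homology is unique and its existence persists and obstructs deformations. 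Thus the listed invariants give a lower bound on the number of components in each case.

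Next I would construct explicit representatives realizing each value combination allowed by the statement. For the hyperelliptic components in $\HH(2g-2)$ and $\HH(g-1,g-1)$ one takes unfoldings of right-angled staircase billiards or $(2g+1)$- and $(2g+2)$-gons, both of which visibly admit a hyperelliptic involution. For the two spin values (when $g \geq 4$ or in the $g=3$ case of $\HH(a,a)$) I would start with a specific origami or $L$-shape of the required spin and use the surgery from Lemma \ref{lem: a pair of surgeries}, which is exactly calibrated to change the parity of $\sum (1+\mathrm{ind}(\alpha_i))(1+\mathrm{ind}(\beta_i))$ while staying in the same stratum. The small-genus exceptional cases ($g \leq 3$) must be checked by hand, and the coincidence of hyperelliptic and spin-even in $\HH(2,2)$ at $g=3$ follows from a direct computation of the Arf invariant on the unique hyperelliptic involution.

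The hard part, and the core of the proof, is establishing the matching upper bound: two surfaces with the same invariants lie in the same component. My plan is to argue by double induction, on the genus $g$ and on the number $k$ of zeros. The inductive step splits each surface into horizontal cylinder decompositions via Masur's theorem and deforms to a normal form along horizontal trajectories; then one uses two types of moves, namely (i) \emph{bubbling a handle} (the reverse of Lemma \ref{lem: basic surgery} and the analogue of \cite[Lemma 14]{KZ}), which relates $\HH(a_1,\ldots,a_k)$ to a lower-genus stratum with adjusted zero orders, and (ii) \emph{breaking up a zero} of order $a_i = b+c$ into two zeros of orders $b,c$, which relates $\HH(\ldots,a_i,\ldots)$ to $\HH(\ldots,b,c,\ldots)$ by a local surgery on a small disk around the singularity. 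These moves preserve both hyperellipticity (where it survives) and spin parity (with explicit accounting), so they let us connect any surface to a fixed normal-form representative within its invariant class. The base cases are the low-genus strata $\HH(2),\HH(1,1),\HH(4),\HH(2,2),\HH(3,1)$, whose connectedness can be verified directly by examining explicit $L$-shaped and pillowcase constructions.

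The main obstacle will be the spin/hyperelliptic bookkeeping during the inductive surgeries: one must verify that the bubbling and zero-splitting operations are either spin-preserving or fill out both spin parities, and similarly that they respect or escape the hyperelliptic locus as needed to realize precisely the components listed. This is the calculation underlying \cite[App.\ B, Cor.\ 5]{KZ} and amounts to carefully tracking the Arf formula through each local move; the case division in the statement (odd $g$ versus even $g$, $a_1 = a_2$ versus $a_1 \neq a_2$, small-$g$ exceptions) is dictated entirely by when these moves fail to connect distinct invariant classes. The remaining small-genus coincidences that produce the special low-genus exceptions would then be read off from the direct analysis of the base cases.
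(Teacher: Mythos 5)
You should first note that the paper offers no proof of this proposition at all: it is quoted directly from Kontsevich--Zorich (\cite{KZ}, Theorems 1 \& 5 and Corollary 5 of Appendix B), and the authors use it as a black box in the case analysis of Proposition \ref{prop:verifying crit}. So the only question is whether your sketch would stand as an independent proof, and it would not. The entire content of the theorem is the upper bound on the number of components -- that two surfaces with the same invariants can be connected -- and your plan for this step (``double induction on $g$ and $k$, using bubbling a handle and breaking up a zero, with careful spin/hyperelliptic bookkeeping, base cases checked by hand'') is a restatement of the program of \cite{KZ}, not an execution of it. That induction, the verification that the two local moves act transitively on components with fixed invariants, and the low-genus exceptional coincidences occupy most of the original paper; declaring them ``the main obstacle'' leaves the proof essentially where it started. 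Within the scope of the present paper the honest move is simply to cite \cite{KZ}, as the authors do.

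There is also a concrete error in the part you do argue: the claim that the hyperelliptic locus is open in a stratum ``because an involution inducing $-\mathrm{Id}$ on homology persists and obstructs deformations'' is false in general. In most strata the hyperelliptic locus is a closed subvariety of positive codimension; for instance, surfaces in $\HH(2,2)$ whose hyperelliptic involution fixes both zeros lie in the even or odd components and can be deformed to non-hyperelliptic surfaces without leaving the stratum. What is true is that the specific hyperelliptic components of $\HH(2g-2)$ (involution fixing the zero) and $\HH(g-1,g-1)$ (involution exchanging the two zeros) are open as well as closed, and the standard way to see this is a dimension count via the quotient quadratic differential on the sphere (the branched double cover construction), not a persistence argument for the involution. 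So even the ``lower bound'' half of your outline needs to be repaired before the invariants give the stated count of components. The spin half is fine as far as it goes, provided (as you parenthetically note) all $a_i$ are even so that the Arf invariant of \cite[eqn.\ (4)]{KZ} is defined, and Lemma \ref{lem: a pair of surgeries} of the paper indeed shows how to realize both parities; but realizing values of invariants is the easy direction.
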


\begin{proof}[Proof of Proposition \ref{prop:verifying crit}]
  The proof will be  case-by-case. Here are the cases:

  \begin{enumerate}[(i)]
\item \label{item: first one} $\HH(1,1)$. 
  \item \label{item: fifth one} All the $a_i$ are nonzero and $\HH$ is
    connected.
    \item \label{item: sixth one} All the $a_i$ are nonzero and $\HH$
      has two connected components distinguished by spin.
      \item \label{item: seventh one} All the $a_i$ are nonzero and
        $\HH$ has two connected components distinguished by
        hyperellipticity.
        \item \label{item: eighth one} All the $a_i$ are nonzero and
          $\HH$ has three connected components.
          \item \label{item: ninth one} Some of the $a_i$ are zero.
\end{enumerate}

\textbf{Case \ref{item: first one}.} There is just one connected component
and the desired surface is a $Z$-shaped surface, with three horizontal
cylinders $C_1, C_2, C_3$
of circumferences $c_1, c_1+c_3, c_3$, where $C_1, C_3$ are simple. We
put all of the removable 
singular points in the interior of $C_3$, and choose choose $c_1, c_3$
so that $c_1/(c_1+c_3) \notin \Q$.  It is clear that with these
choices the conditions are satisfied.

\textbf{Case \ref{item: fifth one}.} The stratum $\HH$ is
connected, and we have at least two singularities of positive
order. So with no loss of generality that they are labelled 1 and
2. The result follows from Lemma \ref{lem: basic surgery}, applied to a surface
in $\HH(a_1-1, a_2-1, a_3, \ldots, a_k)$, and taking 
$w\notin c\mathbb{Q}$, so that $w/(c+w) \notin \Q$.

\textbf{Case \ref{item: sixth one}.}
We apply the surgery in Lemma \ref{lem: a pair of surgeries}, with
$w_1/w_2 \notin \Q$. Namely if $p$ and $q$ are labelled $i, j$, we let
$b_i = a_i-2$, $b_j = a_j-2$ and $b_\ell = a_\ell$ for $\ell \neq
i,j$. 

\textbf{Case \ref{item: seventh one}.}
There are two connected 
components. One is hyperelliptic, one is not. This means that $a_1 =
a_2$ and either $g=3$ (in which case $a_1 = a_2 = 2$) or $g \geq 4$ is
even (in which case $a_1 = a_2 = g-1$). In
this case we give explicit surfaces, one in each connected
component. The first surface (the $\HH(2,2)$ case is shown in Figure
\ref{fig: a surface in H22}) is a `staircase' surface made of gluing $2g$ rectangles to
each other. The 
rectangles are labelled $(k, B)$ and $(k, T)$ for $k = 1 , \ldots,
g$. The top (respectively, bottom) of $(k, B)$ is glued to the bottom
(resp., top) of $(k,T)$ for $k=1, \ldots, g$, and the left
(resp., right) 
of $(k, T)$ is glued to the right (resp., left) of $(k+1, B)$ for $k=1,
\ldots, g-1$. The horizontal sides of $(1, B)$ are glued to each other,
as are the horizontal sides of $(g, T)$. This surface is
hyperelliptic since it has a hyperelliptic involution rotating each
rectangle around its midpoint, and this involution swaps the
singularities (see \cite[Remark 3]{KZ}). The second surface is obtained as
follows. We first construct a hyperelliptic surface in $\HH(a_1-2,
a_2-2)$ as in the previous paragraph. Then we perform the surgery
described in Lemma \ref{lem: a pair of surgeries}. The resulting
surface has a horizontal cylinder intersecting three vertical
cylinders, and thus, by \cite[Lemma 2.1]{Lindsey}, is not hyperelliptic.  See
Figure \ref{fig: nonhyp} for an example in 
$\HH(2,2)$.  In both of these constructions there are no restrictions
on the sidelengths of the rectangles, and we can easily arrange that
two of the circumferences are incommensurable. 

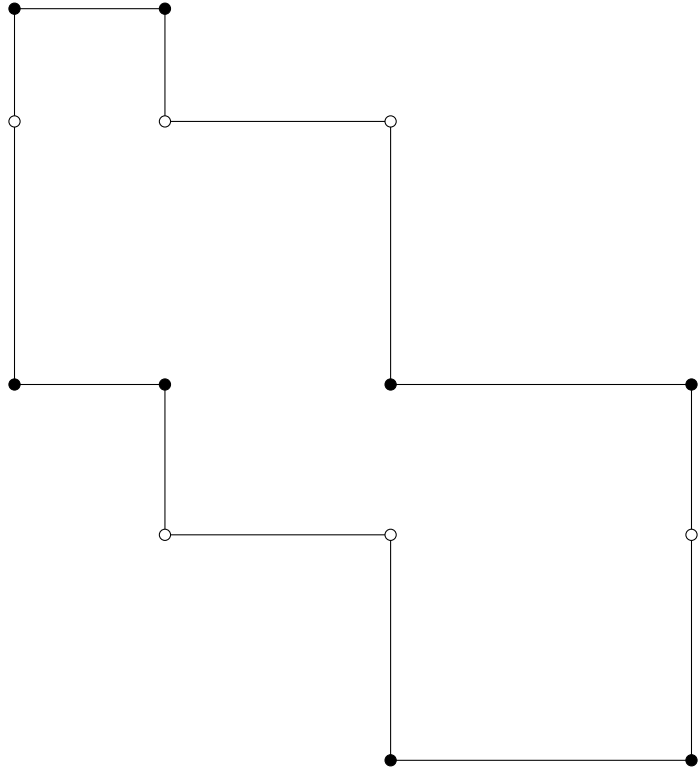
\begin{figure}
\begin{tikzpicture}
\draw(0,0)--(2,0)--(2,-2)--(5,-2)--(5,-5)--(9,-5)--(9,0)--(5,0)--(5,3.5)--(2,3.5)--(2,5)--(0,5)--(0,0);
\node at (0,0) [circle, draw, fill=black, outer sep=0pt,  inner
sep=1.5pt]{};
\node at (2,0) [circle, draw, fill=black, outer sep=0pt,  inner
sep=1.5pt]{};
\node at (5,0) [circle, draw, fill=black, outer sep=0pt,  inner
sep=1.5pt]{};

\node at (9,0) [circle, draw, fill=black, outer sep=0pt,  inner
sep=1.5pt]{};
\node at (9,-5) [circle, draw, fill=black, outer sep=0pt,  inner
sep=1.5pt]{};
\node at (5,-5) [circle, draw, fill=black, outer sep=0pt,  inner
sep=1.5pt]{};

\node at (2,5) [circle, draw, fill=black, outer sep=0pt,  inner
sep=1.5pt]{};
\node at (0,5) [circle, draw, fill=black, outer sep=0pt,  inner
sep=1.5pt]{};

\node at (0,3.5) [circle, draw, fill=white, outer sep=0pt,  inner
sep=1.5pt]{};
\node at (2,3.5) [circle, draw, fill=white, outer sep=0pt,  inner
sep=1.5pt]{};
\node at (5,3.5) [circle, draw, fill=white, outer sep=0pt,  inner
sep=1.5pt]{};

\node at (2,-2) [circle, draw, fill=white, outer sep=0pt,  inner
sep=1.5pt]{};
\node at (5,-2) [circle, draw, fill=white, outer sep=0pt,  inner
sep=1.5pt]{};
\node at (9,-2) [circle, draw, fill=white, outer sep=0pt,  inner
sep=1.5pt]{};
\end{tikzpicture}
\caption{A surface in $\mathcal{H}^{hyp}(2,2)$.}\label{fig: a surface 
    in H22}
\end{figure}

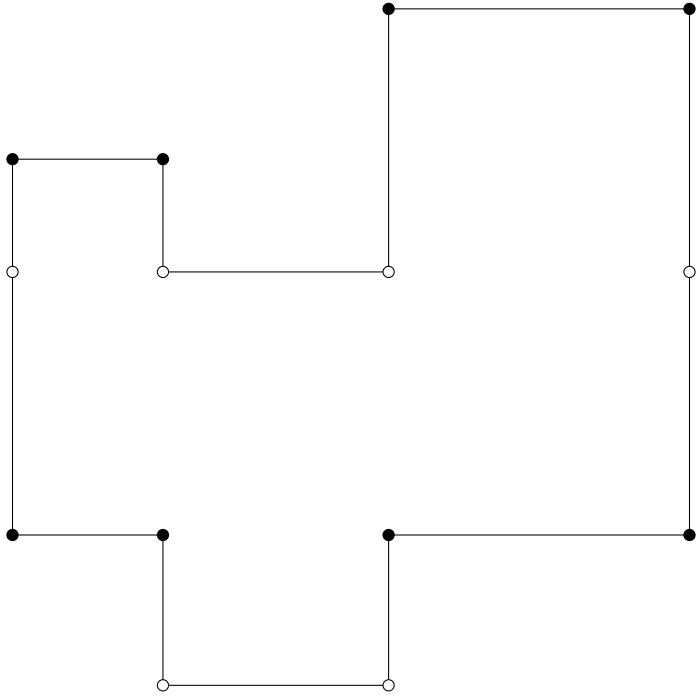
\begin{figure}
\begin{tikzpicture}
\draw (5,3.5)--(2,3.5)--(2,5)--(0,5)--(0,0)--(2,0)--(2,-2)--(5,-2)--(5,0)--(9,0)--(9,7)--(5,7)--(5,3.5);
\node at (0,0) [circle, draw, fill=black, outer sep=0pt,  inner
sep=1.5pt]{};
\node at (2,0) [circle, draw, fill=black, outer sep=0pt,  inner
sep=1.5pt]{};
\node at (5,0) [circle, draw, fill=black, outer sep=0pt,  inner
sep=1.5pt]{};

\node at (9,0) [circle, draw, fill=black, outer sep=0pt,  inner
sep=1.5pt]{};
\node at (9,7) [circle, draw, fill=black, outer sep=0pt,  inner
sep=1.5pt]{};
\node at (5,7) [circle, draw, fill=black, outer sep=0pt,  inner
sep=1.5pt]{};

\node at (2,5) [circle, draw, fill=black, outer sep=0pt,  inner
sep=1.5pt]{};
\node at (0,5) [circle, draw, fill=black, outer sep=0pt,  inner
sep=1.5pt]{};

\node at (0,3.5) [circle, draw, fill=white, outer sep=0pt,  inner
sep=1.5pt]{};
\node at (2,3.5) [circle, draw, fill=white, outer sep=0pt,  inner
sep=1.5pt]{};
\node at (5,3.5) [circle, draw, fill=white, outer sep=0pt,  inner
sep=1.5pt]{};

\node at (2,-2) [circle, draw, fill=white, outer sep=0pt,  inner
sep=1.5pt]{};
\node at (5,-2) [circle, draw, fill=white, outer sep=0pt,  inner
sep=1.5pt]{};
\node at (9,3.5) [circle, draw, fill=white, outer sep=0pt,  inner
sep=1.5pt]{};
\end{tikzpicture}
\caption{A surface in $\mathcal{H}^{nonhyp}(2,2)$.}\label{fig: nonhyp}
\end{figure}

\textbf{Case \ref{item: eighth one}.}
In this case $a_1 = a_2 = g-1$ for $g \geq 5$ odd. Applying the
argument in Case \ref{item: sixth one}, we obtain the required
surfaces in the odd and even connected components. To obtain the required
surface in the hyperelliptic component we use the `staircase surface'
describe in Case \ref{item: seventh one}. 

\textbf{Case \ref{item: ninth one}. } Assume with no loss of
generality that the removable singularities are labelled $k'+1,
\ldots, k$ for some $k' \geq 2$, and let $\HH' = \HH(a_1, \ldots,
a_{k'})$. Note that the singularities $p$ and $q$ have label in $\{1,
\ldots, k'\}$. Apply the preceding considerations to obtain a surface in
$\HH'$ with the required cylinders. By examining the proof in all
preceding case one sees that the number of horizontal cylinders on
this surface is at least three, that is there is at least one cylinder
$C_3$ which is distinct from the cylinders $C_1, C_2$, and we modify
$M$ by adding $k-k'$ in general position in the interior of $C_3$, to
obtain the desired surface.

\end{proof}

\section{Zero entropy} \label{sec: zero entropy}
In this section we prove the following result:
\begin{thm}\label{thm: rel entropy general}
Let $\cH$ be a stratum for which $\dim Z >0$, let $z \in Z \sm \{0\}$, and let $\mu$ be a
probability measure on $\cH$ such that $\Rel_{z} (q)$ is defined for
$\mu$-a.e. $q$. Assume that $\mu$ is $\Rel_{z}$-invariant and ergodic,
and assume in addition that
\begin{equation}\label{eq: moreover}
  \text{ there is  } t_n \to
\infty \text{ so that } (g_{-t_n})_* \mu
\text{ converges to a probability measure}
\end{equation}
(in the weak-* topology). Then the
entropy of $\Rel_z$ acting  on $(\cH, \mu)$ is zero. 
\end{thm}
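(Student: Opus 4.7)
The plan is to exploit the commutation relation \eqref{eq: commutation rel geodesic} to relate the entropy of $\Rel_z$ with respect to $\mu$ to the entropy with respect to $(g_{-t})_* \mu$, and then derive a contradiction from hypothesis \eqref{eq: moreover}. Setting $w = \tau v$ in \eqref{eq: commutation rel geodesic} gives $g_s \circ \Rel_w \circ g_{-s} = \Rel_{e^s w}$ for all $s \in \mathbb{R}$. Interpreting $\Rel_z$-invariance in the flow sense, so that $\mu$ is invariant under every $\Rel_{sz}$, the measure $(g_{-t})_* \mu$ is $\Rel_z$-invariant for every $t$, and combining conjugation-invariance of entropy with the standard flow-scaling $h_\nu(\Rel_{sz}) = |s|\, h_\nu(\Rel_z)$ one obtains the identity
\begin{equation*}
h_{(g_{-t})_* \mu}(\Rel_z) \;=\; e^{t}\, h_\mu(\Rel_z) \qquad \text{for all } t\ge 0.
\end{equation*}

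I would argue by contradiction: suppose $h_\mu(\Rel_z) = a > 0$. Then along the sequence $t_n \to \infty$ given in \eqref{eq: moreover}, the measures $\nu_n \df (g_{-t_n})_* \mu$ are $\Rel_z$-invariant probability measures with $h_{\nu_n}(\Rel_z) = e^{t_n} a \to \infty$, while by hypothesis $\nu_n \to \mu_\infty$ weakly for some probability measure $\mu_\infty$ on $\cH$. To get a contradiction I want a uniform upper bound on the entropies $h_{\nu_n}(\Rel_z)$. The key structural fact is that in period coordinates $\Rel_z$ acts by a Euclidean translation; hence its derivative is the identity and all its Lyapunov exponents vanish. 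A Ruelle/Brin--Katok style argument then says that the dynamical $\Rel_z$-balls in such a chart do not shrink with $n$, so the entropy contribution from any chart is zero.

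I would implement this by choosing a finite partition $\xi$ of $\cH$ whose cells lie inside period-coordinate charts and whose boundaries are $\mu_\infty$-null. Weak convergence then gives $H_{\nu_n}(\xi^{(N)}) \to H_{\mu_\infty}(\xi^{(N)})$ for the refined partitions $\xi^{(N)} = \bigvee_{i=0}^{N-1} \Rel_z^{-i}\xi$, while the translation structure in charts, combined with Poincar\'e recurrence for the $\Rel_z$-invariant probability measures $\nu_n$, forces $\frac{1}{N} H_{\nu_n}(\xi^{(N)})$ to be small once $N$ is large, with an error depending only on the $\nu_n$-mass of orbits that repeatedly exit the charts. Tightness of the sequence $(\nu_n)$ -- which is exactly what hypothesis \eqref{eq: moreover} provides -- ensures that this error is uniform in $n$; passing first to $N \to \infty$ and then to $n \to \infty$ gives a uniform bound on $h_{\nu_n}(\Rel_z)$, contradicting $h_{\nu_n}(\Rel_z) \to \infty$.

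The hard part will be the uniform control of the ``escape error'' in the partition argument. Because $\cH$ is non-compact and $\Rel_z$ is not continuous, orbits can leave any preassigned compact set; the role of \eqref{eq: moreover} is precisely to prevent mass escape along the sequence $(\nu_n)$, so that a single compact set captures a uniformly large fraction of the dynamics of every $\nu_n$. Without this tightness the scaling identity would still hold, but the entropies $h_{\nu_n}(\Rel_z)$ could blow up honestly via concentration of mass near the cusps, which is the obstruction flagged in the introduction.
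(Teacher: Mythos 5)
Your proposed route is genuinely different from the paper's, and unfortunately the central step — the uniform upper bound on $h_{\nu_n}(\Rel_z)$ — is not established by your sketch; it is essentially the theorem itself being reasserted rather than proved.

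On the scaling identity: the theorem assumes $\mu$ is invariant under the single \emph{map} $\Rel_z$, not the flow $\{\Rel_{sz}\}$ (this is how the paper uses it in the proof of Theorem~\ref{thm: rel entropy}, where it decomposes ergodically with respect to the transformation $T=\Rel_{z_0}$). Under map-invariance, $(g_{-t})_*\mu$ is $\Rel_z$-invariant only when $e^t\in\mathbb{Z}$, and Abramov's flow-scaling $h_\nu(\Rel_{sz})=|s|h_\nu(\Rel_z)$ is not available for irrational $s$. This can be repaired by replacing $t_n$ with $\log\lfloor e^{t_n}\rfloor$ (the limit measure is unchanged by continuity of the $g$-action, and then $h_{\nu_n}(\Rel_z)=m_n\,h_\mu(\Rel_z)$ via $h(T^m)=m\,h(T)$ and conjugation), so this is a fixable technicality.

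The real gap is the uniform bound. What your argument would deliver, if the ``translation structure in charts, combined with Poincar\'e recurrence'' step were carried out, is a bound on $h_{\nu_n}(\Rel_z,\xi)$ for one fixed finite partition $\xi$ whose cells sit in charts. But any finite partition already satisfies the trivial bound $h_{\nu_n}(\Rel_z,\xi)\le\log(\#\xi)$, which is uniform in $n$ and says nothing about $h_{\nu_n}(\Rel_z)=\sup_{\mathcal P}h_{\nu_n}(\Rel_z,\mathcal P)$. To get a contradiction with $h_{\nu_n}(\Rel_z)=m_n a\to\infty$ you need to control the entropy over \emph{all} partitions (or a generating/exhausting family), and precisely there your escape-mass control collapses: as partitions refine and cells approach the cusp, the constants in ``small error from orbits exiting charts'' degrade. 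Moreover, the assertion that the translation structure forces $\frac1N H_{\nu_n}(\xi^{(N)})$ to be small is exactly the content of the theorem applied to $\nu_n$, and you have offered no mechanism for it. The intuition ``zero Lyapunov exponents $\Rightarrow$ zero entropy by Ruelle'' is correct in spirit but does not apply: $\cH$ is non-compact, the AGY metric distorts near the cusp, and $\Rel_z$ is only almost-everywhere defined, so a $\nu_n$-small set of orbits wandering deep into the cusp for long stretches can shatter a chart ball into many atoms. You also tacitly assume that the weak-$*$ limit $\mu_\infty$ is $\Rel_z$-invariant and has $\mu_\infty$-null boundaries for $\xi^{(N)}$; since $\Rel_z$ is discontinuous, neither is automatic.

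The paper avoids all of this by working directly with $\mu$: it combines the Shannon--McMillan--Breiman theorem, a Besicovitch-type covering lemma, the fact that the upper pointwise dimension of $\mu$ is finite (so ball measures cannot decay exponentially in the radius), and — crucially — Proposition~\ref{prop: local growth}, which shows that if both $q$ and $g_{-2\log L}q$ lie in a compact set $K$ then the metric ball $B(q, L^{-5})$ stays small under $T,\dots,T^L$. That estimate is the rigorous substitute for ``$\Rel_z$ has zero Lyapunov exponents'': the geodesic flow is used to pull the Rel displacement to a scale where it is genuinely small, and the recurrence condition supplied by~\eqref{eq: moreover} provides the compactness needed to keep the metric distortion under control. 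Your sketch uses~\eqref{eq: moreover} only as tightness of $(\nu_n)$, which is weaker than the two-sided compactness Proposition~\ref{prop: local growth} exploits, and that is why the uniform entropy bound does not come out. If you want to pursue the rescaling idea, you would still need to prove something like Proposition~\ref{prop: local growth}, at which point the rescaling becomes superfluous.
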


For the proof of Theorem \ref{thm: rel entropy general}, we will
need an estimate showing that
points stay close to each other for times up 
to $L$, provided their initial distance is polynomially small (as a
function of $L$). To make this precise we will use
the sup-norm Finsler metric on $\cH$, which was introduced  by Avila, Gou\"ezel and Yoccoz
\cite{AGY} and whose definition we now recall. 
For $q_0, q_1$ belonging to the same connected component of a stratum
$\cH$, we write
\begin{equation}\label{eq: inf which is a min}\dist(q_0, q_1)  = \inf_\gamma \int_0^1
  \|\gamma'(t)\|_{\gamma(t)} dt,
  \end{equation}
where $\gamma: [0,1]\to \cH_{\mathrm{m}}$ ranges
over all $C^1$ curves with $\gamma(0) \in \pi^{-1}(q_0), \ \gamma(1)
\in \pi^{-1}(q_1)$, and
$\|\cdot \|_{\mathbf{q}}$ is a pointwise norm on the tangent space to $\cH_{\mathrm{m}}$ at
$\mathbf{q}$, identified via the developing map with $H^1(S, \Sigma;
\R^2)$. 
 Below, balls, diameters of sets, and $\vre$-neighborhoods
 of sets will be defined using this metric. We can now state our
 estimate. 

\begin{prop}\label{prop: local growth}
Let $\cH$ be a stratum of translation surfaces with at least two
singularities, let $Z$ be its real Rel space, let $z_0 \in Z$, and let
$T$ be the map of $\cH$ defined by applying $\Rel_{z_0}$ (where defined). Then for every
compact subset $K 
\subset \cH$, there is $L_0 >0 
$, such that if
$q \in \cH, \ L \in \N, \ L>L_0, 
$ satisfy
\begin{equation}\label{eq: they satisfy}
q \in K \ \ \text{ and } \ g_{-\ell}q \in K, \ \ \text{ where }  \ell \df
2\log L,
\end{equation}
then the maps $T, \ldots, T^L$ are all defined on $B\left(q, \frac{1}{L^{5}}
\right)$, and we have
$$
\max_{j=1, \ldots, L} \diam \left( T^j \left(B
  \left(q, \frac{1
      }{L^{5}} \right)  \right) \right) 
\to_{L \to \infty} 0.
$$

\end{prop}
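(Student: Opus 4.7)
The plan is to reduce the long-orbit estimate to a short-time Rel estimate by conjugating with the geodesic flow, exploiting the commutation relation \eqref{eq: commutation rel geodesic} together with the AGY-metric bound $d(g_t q_1, g_t q_2) \le e^{|t|} d(q_1, q_2)$ and the fact that the real Rel flow acts as a translation in period coordinates.

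Setting $\ell \df 2\log L$, so that $e^{-\ell} = L^{-2}$, the commutation relation \eqref{eq: commutation rel geodesic} rearranges to
\[
T^j = \Rel_{j z_0} = g_\ell \circ \Rel_{e^{-\ell} j z_0} \circ g_{-\ell},
\]
whenever both sides make sense. For $1 \le j \le L$ the Rel parameter $s_j \df e^{-\ell} j z_0$ has norm at most $\|z_0\|/L$, which tends to zero.

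First I would check that $T, \ldots, T^L$ are defined on $B(q, L^{-5})$. The AGY norm property gives $d(g_{-\ell} q, g_{-\ell} q') \le e^\ell d(q, q') \le L^{-3}$ for $q' \in B(q, L^{-5})$, and since $g_{-\ell} q \in K$ by hypothesis, for $L$ large the entire image $g_{-\ell} B(q, L^{-5})$ is contained in a fixed relatively compact neighborhood $K_1$ of $K$. Because the domain of the Rel flow is open and contains $K_1 \times \{0\}$, compactness of $K_1$ yields $\varepsilon > 0$ and a compact $K_2 \supset K_1$ such that the whole segment $\{\Rel_{tw}(p) : t \in [0,1]\}$ stays in $K_2$ whenever $p \in K_1$ and $\|w\| < \varepsilon$. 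Choosing $L_0 > \|z_0\|/\varepsilon$ guarantees $\|s_j\| < \varepsilon$ for every $j \le L$, so each $T^j$ is well-defined on the whole ball $B(q, L^{-5})$.

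For the diameter bound, I would assemble three estimates. First, $g_{-\ell}$ expands AGY distances by at most $e^\ell = L^2$. Second, $\Rel_{s_j}$ is a translation in period coordinates, so its differential is the identity on $H^1(S,\Sigma;\R^2)$; combined with continuous dependence of the Finsler norm $\|\cdot\|_\q$ on the basepoint over the compact $K_2$, there is a constant $C = C(K_2,\varepsilon)$, independent of $j$, with $\Rel_{s_j}$ being $C$-bilipschitz on $K_1$ in the AGY metric. Third, $g_\ell$ again expands distances by at most $e^\ell = L^2$. Composing,
\[
d(T^j(q), T^j(q')) \le L^2 \cdot C \cdot L^2 \cdot L^{-5} = C/L,
\]
so $\diam T^j(B(q, L^{-5})) \le 2C/L \to 0$ uniformly over $j \in \{1, \ldots, L\}$.

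The main technical obstacle is justifying the metric-distortion bound $d(g_t q_1, g_t q_2) \le e^{|t|} d(q_1, q_2)$ rigorously from the infimum-over-curves definition \eqref{eq: inf which is a min}: the pointwise tangent-space estimate is immediate from the saddle-connection definition of the AGY Finsler norm, but converting it to a distance inequality requires pushing forward short minimizing curves under $g_{-\ell}$ without losing control of their length, a point handled in \cite{AGY} which deserves care when $\ell$ is as large as $2\log L$. The remaining ingredients --- openness of the domain of Rel, the translation property of Rel in period charts, and the continuity of $\|\cdot\|_\q$ on compact sets --- are standard.
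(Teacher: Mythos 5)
Your argument is essentially the paper's own proof: conjugate $T^j$ by $g_\ell$ to write it as $g_\ell\circ\Rel_{e^{-\ell}jz_0}\circ g_{-\ell}$, bound the two geodesic factors using the Lipschitz property of $g_t$ in the sup-norm metric, note that $g_{-\ell}(B)$ sits in a fixed compact enlargement $K'$ of $K$ because it has small diameter and meets $K$, observe that the Rel increments $e^{-\ell}jz_0$ have norm at most $\|z_0\|/L$ and combine this with openness of the Rel domain and compactness to get well-definedness, and control the middle factor by a uniform local Lipschitz constant for small Rel moves coming from the translation property in period coordinates together with continuity of the Finsler norm on compacta. The paper spells out that last Lipschitz estimate more carefully (pushing the curve realizing $\dist(q_0,q_1)$ through $\Rel_z$ and comparing norms of its derivative at translated basepoints), which you only sketch, but the underlying idea is identical.

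Two remarks. First, you invoke $\dist(g_tq_0,g_tq_1)\le e^{|t|}\dist(q_0,q_1)$, but Proposition~\ref{prop: sup norm properties}(a) --- and the underlying sup-norm computation, in which $g_t$ can expand a numerator by $e^{|t|}$ while shrinking a denominator by $e^{-|t|}$ --- gives $e^{2|t|}$. With $e^{2|t|}$ and $\ell=2\log L$, each geodesic factor costs $L^4$, and your chain of estimates gives $L^4\cdot C\cdot L^4\cdot L^{-5}=CL^3$, which does not tend to zero. The paper's own computation (writing $\diam(A)\le 2/L^3$ and ``it suffices to show $L^2\cdot\diam(\cdots)\to 0$'') contains the same slip; the repair is simply to shrink the radius, say to $1/L^{9}$, which changes nothing downstream (the authors note explicitly that the power $5$ was not optimized). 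You should be aware of it, though, since you wrote the unsquared bound explicitly. Second, the ``main technical obstacle'' you flag at the end --- passing from a pointwise Finsler bound to a distance inequality --- is not actually an obstacle: one pushes any admissible curve through $g_t$ and integrates, and the size of $\ell$ plays no role in that step.
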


We have made no attempt to optimize the power 5 in this
statement.

Our proof of Proposition \ref{prop: local growth} will use 
some properties of
the sup-norm metric. They are proved in \cite{AGY}, see also \cite{AG} and
\cite[\S 2]{CSW}. Our notation will follow the one used in \cite{CSW}. 

\begin{prop}\label{prop: sup norm properties}
The following hold:
\begin{enumerate}
  \item[(a)] For all $q_0, q_1$ and all $t \in \R$, 
$
\dist(g_t q_0, g_t q_1) \leq e^{2|t|} \dist(q_0, q_1).
$
\item[(b)]
  The metric dist is proper; that is, for any fixed basepoint $q_0$, the map $q \mapsto \dist(q,
  q_0)$ is proper. In particular, the $\vre$-neighborhood of a
  compact set is pre-compact, for any $\vre>0$. 
\item[(c)]
  The map $\bq \mapsto \| \cdot \|_\bq$ is continuous, and hence bounded
  on compact sets.  This means that for any compact $K \subset \cH_{\mathrm{m}}$
  there is $C>0$ such that for any $\bq_0, \bq_1$ in $K$, the norms $\|
  \cdot \|_{\bq_0} , \, \| \cdot \|_{\bq_1}$ are bi-Lipschitz equivalent
  with constant $C$.
  \item[(d)] The infimum in \eqref{eq: inf which is a min} is actually a
    minimum, that is attained by some curve $\gamma$.
  
\end{enumerate}
\end{prop}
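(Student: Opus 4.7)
The plan is to verify the four assertions one at a time, using the explicit form of the AGY pointwise norm. Recall that $\| v \|_{\bq}$ is defined by taking a supremum (over an appropriate finite collection of ``short'' saddle connections $\alpha$, or over a triangulation of $\bq$ by such) of quantities like $\max(|v(\alpha)_x|, |v(\alpha)_y|)$, possibly weighted by $1/\mathrm{length}(\alpha)$ on $\bq$; here $v \in H^1(S,\Sigma;\R^2)$ is identified with a tangent vector at $\bq$ via the developing map. With this picture in mind, each of (a)--(d) reduces to a standard check, and I would cite \cite{AGY,AG,CSW} for details and carry out only the essential computations.

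For (a), I would take any $C^1$ curve $\gamma\colon[0,1]\to \cH_{\mathrm{m}}$ joining lifts of $q_0,q_1$ and push it forward to $g_t\circ\gamma$, which joins lifts of $g_tq_0,g_tq_1$. In the tangent identification with $H^1(S,\Sigma;\R^2)$, $(g_t\gamma)'(s) = g_t\cdot\gamma'(s)$, where $g_t$ acts on coefficients. The key point is that the operator norm of $g_t$ on $\R^2$ (with respect to the $\max$-norm) is $e^{|t|}$, while the re-scaling of the ``short'' saddle connections used to define the pointwise norm contributes at most another factor $e^{|t|}$, giving $\|g_t\cdot v\|_{g_t\bq}\le e^{2|t|}\|v\|_\bq$; integrating and taking the infimum over $\gamma$ proves (a). For (c), the quantities entering the definition of $\|\cdot\|_\bq$ (holonomies of a finite set of saddle connections, plus their reciprocals) are continuous functions of $\bq$ in period coordinates, so the map $\bq \mapsto \| \cdot \|_\bq$ is continuous in the operator-norm topology on the space of norms on $H^1(S,\Sigma;\R^2)$. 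On a compact $K$ this continuous assignment has compact image in the space of norms, and any two norms with bounded ratio are bi-Lipschitz equivalent, giving the claimed uniform constant.

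For (b), I would argue that a bound $\dist(q,q_0) \le R$ implies that $q$ admits a lift $\bq$ with $\dev(\bq)$ within bounded distance of $\dev(\bq_0)$ in $H^1(S,\Sigma;\R^2)$: integrating the pointwise norm along a near-minimizing curve $\gamma$ controls how far $\dev(\gamma(s))$ can move, because the pointwise norm dominates the Euclidean norm on coefficients up to a factor depending only on $\bq_0$ and a compact neighborhood. Hence all holonomies of $\bq$ stay bounded. To pass from this to pre-compactness in $\cH$, I would invoke the Mumford--Mahler type compactness criterion for strata: a set of unit-area translation surfaces with bounded holonomies of a fixed triangulation, and uniform positive lower bound on short saddle connections, is relatively compact. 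The lower bound on shortest saddle connections is automatic because the pointwise norm blows up near the boundary of the stratum (this is precisely why AGY set up their norm that way), so it remains bounded along the curve $\gamma$. For (d), once (c) is established, the pointwise norms are equivalent (with constants depending on a precompact neighborhood of any curve of interest) to a smooth Riemannian norm in local period coordinates, so the infimum in \eqref{eq: inf which is a min} is attained by the usual Arzel\`a--Ascoli argument: reparameterize a minimizing sequence by arclength, use equicontinuity on any compact set guaranteed by (b), and extract a uniformly convergent subsequence; lower semicontinuity of length under uniform convergence finishes the job.

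The main obstacle is (b), specifically the quantitative control of the pointwise norm near the boundary of the stratum, which is where the AGY construction does its real work; the factor $e^{2|t|}$ in (a) also needs to be tracked carefully through the exact normalization convention chosen for $\|\cdot\|_\bq$. Once these are in place, (c) and (d) are soft compactness/continuity statements.
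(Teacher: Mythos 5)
The paper offers no proof of this proposition at all --- it simply cites \cite{AGY}, \cite{AG} and \cite[\S 2]{CSW} --- and your proposal invokes the same sources and fills in the standard sketches found there (the $e^{2|t|}$ bound from the $g_t$-action on holonomies and lengths, systole control for properness, continuity of the norm, and an Arzel\`a--Ascoli argument for attainment), so it is essentially the same approach. One small caution on (b): properness does not follow merely from the norm ``blowing up near the boundary''; the real point in \cite{AGY} is the quantitative, non-integrable rate --- the logarithm of the length of the shortest saddle connection is Lipschitz with respect to the sup-norm metric --- which is what forces any finite-length path to keep the systole bounded below and hence stay in a compact part of the area-one locus.
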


With these preparations, we can give the

\begin{proof}[Proof of Proposition \ref{prop: local growth}]
Write $B \df B\left(q,
  \frac{1}{L^{5}} \right),
\, A \df g_{-\ell}(B)$ (note that $A$ and $B$ both depend on $L$ and
$q$ but
we suppress this from the notation). Let $K'$ be the 1-neighborhood of
$K$, which is
a pre-compact subset of $\cH$ by Proposition \ref{prop: sup norm
  properties}(b). 
Since $\diam(B) \leq \frac{2}{L^{5}}$, Proposition \ref{prop:
  sup norm properties}(a) implies that 
\begin{equation}\label{eq: taking into account}
  \diam(A) \leq  \frac{2}{L^3}.
\end{equation}
It follows from \eqref{eq:
  they satisfy} that  $A \cap K \neq
\varnothing$ and therefore $A \subset K'$.
Since
\begin{equation}
  \label{eq: taking into account 2}
\max_{j=1, \ldots, L}  \|je^{-\ell}
z_0 \| \leq 
\frac{1}{L} \|z_0\| \to_{L \to \infty} 0,
\end{equation}
for all large enough $L$ (depending on $K'$) we have that
$\Rel_{je^{-\ell}z_0}(q')$ is defined for $q' \in K'$. Since $q'_1 \df
\Rel_{je^{-\ell}z_0}\circ g_{-\ell}(q_1)$ is defined for $q_1 \in B$,
we have from \eqref{eq: commutation rel geodesic} that  
$T^j(q_1)  =\Rel_{jz_0}(q_1) = g_{\ell}(q'_1) $ is also defined. This
proves that the maps $T, T^2, \ldots, T^L$ are all defined on $B$.

Furthermore, this computation shows that $T^j(B) = 
g_\ell (\Rel_{je^{-\ell}z_0} (A))$, and so by Proposition \ref{prop:
  sup norm properties}(a), 
it suffices to show that
$$L^2 \cdot \diam \left(\Rel_{je^{-\ell}z_0}
  (A) \right)  \to_{L \to \infty } 0.$$
 Taking into account \eqref{eq: taking into account}
  and \eqref{eq: taking into account 2},  it suffices to show that
 for any
  compact $K'$
  there are positive $\vre, C$ such that for any $q_0, q_1 \in K'$ with $\dist(q_0,
  q_1) < \vre$, and any $z \in Z$ with
  $\|z\|< \vre$, we have
  \begin{equation}\label{eq: locally Lipschitz}
    \dist(\Rel_{z}(q_0, q_1)) \leq C \, \dist(q_0, q_1).
  \end{equation}
  Informally, this is a uniform local Lipschitz estimate for the
  family of maps defined by small elements of $Z$.
  
  To see \eqref{eq: locally Lipschitz}, let $\vre_1$ be small enough so that for any $q \in K'$,
  the ball $B(q, 2\vre_1)$ is contained in a neighborhood which is
  evenly covered by the map $\pi: \cH_{\mathrm{m}} \to \cH$, and let
  $C$ be a bound as in Proposition \ref{prop: sup norm properties}(c),
  corresponding to the compact set which is the $2\vre_1$-neighborhood
  of $K'$. Let  $\vre< \vre_1$ so that for any $z \in Z$ with
  $\|z\|< \vre$ and any $q \in \cH$, $\dist(q, \Rel_{z}(q))<
  \vre_1$. If $\dist(q_0, q_1)<\vre$ then the path $\gamma$ realizing
  their distance (see Proposition \ref{prop: sup norm
    properties}(d)) is contained in a connected component
  $\mathcal{V}$ of
  $\pi^{-1}\left(B(q_0, \vre_1) \right)$. Let
  $$\bar \gamma: [0,1] \to
  H^1(S, \Sigma; \R^2) , \ \ \ \bar \gamma (t)  \df \dev(\gamma(t)) -
  \dev(\gamma(0)),$$  let
  $$\gamma_1 \df \Rel_{z}\circ \gamma, $$
  and analogously define
  $$\bar \gamma_1 : [0,1] \to H^1(S, \Sigma ; \R^2), \ \ \  \bar \gamma_1(t) \df \dev(\gamma_1(t)) -
  \dev(\gamma_1(0)).$$
  By choice of $\vre$ and $\vre_1$, the curve
  $\gamma_1$ also has its image in $\mathcal{V}$. 
 Since $\Rel_{z}$ is
  expressed by $\dev|_{\mathcal{V}}$ as a translation map, the
  curves $\bar \gamma, \bar \gamma_1$ are identical maps. When computing
  $\dist(\Rel_{z}(q_0), \Rel_{z}(q_1))$ via \eqref{eq: inf which
    is a min}, an upper bound is given by computing the path integral
  along the curve $\gamma_1$. We compare  this path integral along
  $\gamma_1$, with the path integral along $\gamma$ giving $\dist(q_0,
  q_1)$. In these two integrals, for any $t$, the tangent
  vectors $\gamma'(t), \gamma'_1(t)$ are identical elements of $H^1(S,
  \Sigma; \R^2))$ for all $t$, 
  but the norms are evaluated using different basepoints. Since these
  basepoints are all in the $2\vre_1$-neighborhood of $K'$, by choice
  of $C$, we have $\|\gamma_1'(t)\|_{\gamma_1(t)} \leq C \|\gamma'(t)
  \|_{\gamma(t)}$ for all $t$. This implies 
  \eqref{eq: locally Lipschitz}. 
\end{proof}

We now list a few additional results we will need. The first is the
following   weak Besicovitch-type covering Lemma, for 
 balls of equal size.

\begin{prop}\label{prop: Besicovitch}
  For any compact $K \subset \cH$ there are positive $N,
      r_0$ so that for
       any $r \in (0, r_0)$, for any $G \subset K$
      the collection $\mathcal{C} \df \{B(q, r): q \in G\}$ contains
      $N$ 
      finite
      subcollections $\mathcal{F}_1, \ldots, \mathcal{F}_N$ satisfying
      $G \subset \bigcup_{i=1}^N \bigcup \mathcal{F}_i, $ and each
      collection $\mathcal{F}_i$ consists of disjoint balls. 
    \end{prop}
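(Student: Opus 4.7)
The plan is a grid-and-coloring argument in period-coordinate charts, using that the sup-norm Finsler metric on $\cH$ is locally bi-Lipschitz to a Euclidean metric on $H^1(S, \Sigma; \R^2) \cong \R^d$. Using compactness of $K$, the even covering $\pi : \cH_{\mathrm{m}} \to \cH$, and Proposition~\ref{prop: sup norm properties}(c), I cover $K$ by finitely many open sets $V_1, \dots, V_M$ with $V_i \Subset U_i$, such that each $U_i$ lifts to an evenly covered open subset of $\cH_{\mathrm{m}}$ on which $\dev$ is a homeomorphism, and the induced period-coordinate identification $\varphi_i : U_i \to W_i \subset \R^d$ is bi-Lipschitz from $\dist$ to the sup-norm with a common constant $C > 1$. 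By the Lebesgue number lemma, there exists $r_0 > 0$ such that for every $q \in K$ and every $r < r_0$ one has $B(q, 5r) \subset U_i$ for some $i$; this ensures that the AGY distance between any two points of such a ball is realized by a path staying in $U_i$, so that the bi-Lipschitz bound directly applies.

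Now fix $r \in (0, r_0)$ and $G \subset K$, and partition $G$ into disjoint pieces $G_i := (G \cap V_i) \setminus \bigcup_{j < i} V_j$. Set $s := r/(2C)$, and partition $\R^d$ into half-open cubes $Q_x := x + [0, s)^d$ indexed by $x \in s \Z^d$. In each chart $i$, for every cube $Q_x$ meeting $\varphi_i(G_i)$, choose a representative $q_i(x) \in G_i$ with $\varphi_i(q_i(x)) \in Q_x$; since $\varphi_i(G_i)$ is bounded, only finitely many cubes contribute. The bi-Lipschitz bound gives $\dist(q, q_i(x)) \le Cs = r/2 < r$ whenever $q \in G_i$ satisfies $\varphi_i(q) \in Q_x$, so $B(q_i(x), r)$ covers all such $q$.

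To separate the representative balls into disjoint subfamilies, fix $k := \lceil 4C^2 \rceil + 1$ and color $s\Z^d$ by the image of $x$ in the quotient $s\Z^d / ks\Z^d$, yielding $k^d$ colors. Two distinct same-colored cubes $Q_x, Q_{x'}$ satisfy $\|x - x'\|_{\sup} \ge ks$, so their representatives lie at sup-norm distance $\ge (k-1)s$, and hence at AGY distance $\ge C^{-1}(k-1)s \ge 2r$; the corresponding open $r$-balls are therefore disjoint in $\cH$. Let $\mathcal{F}_{i,c}$ denote the finite subcollection of $r$-balls centered at representatives of color $c$ in chart $i$. Then each $\mathcal{F}_{i,c}$ consists of pairwise disjoint balls and $\bigcup_c \bigcup \mathcal{F}_{i,c} \supset G_i$, so with $N := M k^d$ the $N$ subcollections $\{\mathcal{F}_{i,c}\}_{i,c}$ together cover $G$.

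The main technical point lies in the first paragraph: one must verify that the constants $C$, $r_0$, and $M$ can all be chosen depending only on $K$, so that $N = M k^d$ is independent of $r$. This is routine given Proposition~\ref{prop: sup norm properties}(c) and the local affine structure of $\cH$ in period coordinates; the rest of the argument is a standard packing-and-coloring exercise in $\R^d$.
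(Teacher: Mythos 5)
Your argument is essentially correct and reaches the right conclusion, but it takes a noticeably different route from the paper. The paper's proof is a two-step reduction to the classical Besicovitch argument: (1)~establish a ``doubling''-type packing bound --- there is $N$ such that any $r$-separated set inside a ball of radius $2r$ centered in $K$ has cardinality at most $N$; this is inherited from Euclidean space via the local bi-Lipschitz structure from Proposition~\ref{prop: sup norm properties}(c); (2)~greedily peel off maximal disjoint subcollections $\mathcal F_1, \mathcal F_2, \dots$ and show, using the packing bound, that after $N$ steps nothing of $G$ remains. That argument is shorter and more abstract: it uses the charts only to import the Euclidean packing constant, and never needs to globally coordinatize or grid the compact set. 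Your approach instead works explicitly inside finitely many period-coordinate charts, discretizes via a cubical grid of side $s = r/(2C)$, picks one representative per occupied cube, and then colors the cubes modulo $k\,s\,\Z^d$ to separate the representative balls; this buys you a fully concrete construction of the subcollections $\mathcal F_{i,c}$ (with an explicit bound $N = Mk^d$) at the cost of having to carefully manage the chart boundaries --- you must ensure short geodesics between nearby points of $G_i$ stay in a fixed precompact set inside $U_i$ where the norm comparison of Proposition~\ref{prop: sup norm properties}(c) is valid, both to justify the two-sided bi-Lipschitz estimate and the Lebesgue-number step. You flag this as ``routine,'' which is fair, but note the paper's packing-bound reformulation sidesteps most of it. Both proofs are valid; yours is more hands-on and explicit, while the paper's is the shorter, more standard reduction.
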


    \begin{proof}
The argument is standard, we sketch it for lack of a suitable
reference.
      
      We first claim that given a compact $K$ and $r \in (0, r_0)$,
      there is $N$ so that the largest $r$-separated subset
      of any ball of radius $2r$,  has cardinality at most
      $N$. Indeed, this property is true for Euclidean space by a
      simple volume argument, and is invariant under biLipschitz maps.
      Thus the claim holds by Proposition \ref{prop: sup norm
        properties}(c).

      We now inductively choose the $\mathcal{F}_i$. Let $\mathcal{F}_1$ be a maximal collection of disjoint
      balls of radius $r$ with centers in $G$. For $i \geq 2$, suppose
      $\mathcal{F}_1, \ldots, \mathcal{F}_{i-1}$ have 
      been chosen, let $G_i \df G \sm
      \bigcup_{j=1}^{i-1} \bigcup \mathcal{F}_j$, and let $\mathcal{F}_i$ be the maximal collection
      of disjoint balls of radius $r$ with centers in $G_i$. Clearly $G \supset
      G_1 \supset \cdots \supset G_N$, and we need to show that
      $G_{N+1} = \varnothing$. Since
      $\mathcal{F}_i$ is maximal, for any  $x \in G_i$ there is $x'$
      which is the center of one of the balls of $\mathcal{F}_i$, so
      that $d(x,x') < 2r$.  
      If $x \in G_{N+1} \neq \varnothing$, then the ball
      $B(x, 2r)$ contains $x_1, \ldots, x_N$ such that $x_i$ is
      the center of one of the balls of $\mathcal{F}_i$. For $i'>i$,
      $d(x_i, x_{i'}) \geq r$ since $x_{i'} \in G_{i'}$. This
      contradicts the property of $N$ from the preceding paragraph. 
      \end{proof}

We will  need to know that volumes of
balls do not decay exponentially:
\begin{lem}\label{lem: lemma 1}
  For any probability measure $\mu$ on $\cH$, for
  $\mu$-a.e. $q$, we have
  \begin{equation}\label{eq: do not decay exponentially}
\lim_{r \to 0+} \frac{-\log (\mu(B(q,r)))}{r} =0.
  \end{equation}
  \end{lem}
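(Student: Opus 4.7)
The plan is to prove the conclusion (which I read as $\lim_{r\to 0^+} r\cdot\bigl(-\log\mu(B(q,r))\bigr)=0$, i.e.\ the mass $\mu(B(q,r))$ decays no faster than $e^{-o(1/r)}$; the denominator in the stated formula appears to be a typographical infelicity) by a Borel--Cantelli argument exploiting the fact that compact subsets of $\cH$ have polynomially bounded covering numbers.

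First, by $\sigma$-compactness of $\cH$ and inner regularity of $\mu$, it is enough to fix a compact $K\subset\cH$ and prove the conclusion for $\mu$-a.e.\ $q\in K$. Next, since by Proposition~\ref{prop: sup norm properties}(c) the sup-norm metric is locally bi-Lipschitz equivalent to the Euclidean metric on $H^1(S,\Sigma;\R^2)$ via period coordinates, covering $K$ by finitely many such charts shows that the minimal number $N(K,\rho)$ of balls of radius $\rho$ needed to cover $K$ satisfies $N(K,\rho)\leq C\rho^{-d}$, for constants $C,d>0$ depending only on $K$.

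Fix $\epsilon>0$ and set
\[
A_n^\epsilon \df \bigl\{q\in K : \mu(B(q,1/n)) \leq e^{-\epsilon n}\bigr\}.
\]
Choose $q_1,\dots,q_{M_n}\in\cH$ with $M_n\leq N(K,1/(2n))$ whose $1/(2n)$-balls cover $K$. For each $i$ with $B(q_i,1/(2n))\cap A_n^\epsilon\neq\varnothing$, picking any $q$ in this intersection gives $B(q_i,1/(2n))\subset B(q,1/n)$ by the triangle inequality, whence $\mu(B(q_i,1/(2n)))\leq e^{-\epsilon n}$. Summing only over such $i$,
\[
\mu(A_n^\epsilon) \;\leq\; N(K,1/(2n))\cdot e^{-\epsilon n} \;\leq\; C(2n)^d\,e^{-\epsilon n},
\]
which is summable in $n$. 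By Borel--Cantelli, $\mu(\limsup_n A_n^\epsilon)=0$, so for $\mu$-a.e.\ $q\in K$ we have $-\log\mu(B(q,1/n))<\epsilon n$ for all sufficiently large $n$. Intersecting the resulting full-measure sets over a sequence $\epsilon_k\downarrow 0$ yields $\lim_n n^{-1}\bigl(-\log\mu(B(q,1/n))\bigr)=0$ for $\mu$-a.e.\ $q\in K$. Passing from $r=1/n$ to arbitrary $r\to 0^+$ is immediate from monotonicity of $r\mapsto\mu(B(q,r))$: for $1/(n+1)\leq r\leq 1/n$,
\[
r\cdot\bigl(-\log\mu(B(q,r))\bigr) \;\leq\; \tfrac{1}{n}\bigl(-\log\mu(B(q,1/(n+1)))\bigr) \;\longrightarrow\; 0.
\]

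The argument is essentially formal once the polynomial covering bound is in hand; that bound is the only place where the geometry of $\cH$ enters, and it reflects nothing more than local finite-dimensionality via period coordinates. I anticipate no serious obstacle.
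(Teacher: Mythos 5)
Your reading of the typo (the denominator should be $1/r$, so the claim is $r\cdot\bigl(-\log\mu(B(q,r))\bigr)\to 0$) is the right one, and your Borel--Cantelli argument is correct. The paper's own proof is a one-liner by citation: it observes that $\limsup_{r\to 0}\frac{-\log\mu(B(q,r))}{-\log r}$ is the upper pointwise dimension of $\mu$ at $q$ and cites the classical fact that this is, $\mu$-a.e., at most the Hausdorff dimension of the ambient space; the stated limit then follows since $r\log(1/r)\to 0$. Your covering-number plus Borel--Cantelli argument is essentially the standard proof of that classical fact, so the two proofs are the same in substance --- yours unwinds the cited ingredient from scratch, which is more self-contained but longer.

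One thing worth flagging, since you chose to prove exactly the $o(1/r)$ statement: this is strictly weaker than what the paper's cited dimension bound actually yields, and (reading ahead to its use in the proof of Theorem~\ref{thm: rel entropy general}, around \eqref{eq:poly decay}) it is the stronger bound that is really needed. There one wants $\mu(B(q,L^{-5}))>k^{-\epsilon L}$ for most $q$ once $L$ is large, i.e.\ $-\log\mu(B(q,r))\lesssim r^{-1/5}$, and $o(1/r)$ with $r=L^{-5}$ gives only $o(L^5)$, which does not suffice. What one really wants is the polynomial lower bound $\mu(B(q,r))\geq r^{D'}$ for some $D'<\infty$ and all small $r$, equivalently $-\log\mu(B(q,r))=O(\log(1/r))$, which is exactly the upper-pointwise-dimension statement. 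Your method delivers this with no extra work: replace $e^{-\epsilon n}$ by $n^{-\alpha}$ with $\alpha>d$ in the definition of $A_n$, so that $\mu(A_n)\leq C(2n)^d n^{-\alpha}$ is still summable. It would be cleaner to prove the lemma in that form outright.
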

  \begin{proof}
    If we replace $r$ in the denominator of \eqref{eq:
      do not decay exponentially} with $\log r$, and replace $\lim$
    with $\limsup$, we get the definition
    of the upper pointwise dimension of $\mu$ at $q$. It is known that 
 the upper pointwise Hausdorff dimension of a
measure is at most the Hausdorff dimension of the ambient
space. This
implies the result. 
    \end{proof}

    We also need some standard facts about entropy. In the following 
    proposition, $X$ is a standard Borel space,  $T: X \to X$ is a measurable
        map, $\Prob(X)^T$ denotes the $T$-invariant Borel probability
        measures on $X$,  $\mu$ is a measure in $\Prob(X)^T$, $\mathcal{P}$ is a
        measurable partition of $X$, and $h_\mu(T,
        \mathcal{P})$ is the entropy of $T$ with respect to $\mu$ and
        $\mathcal{P}$. Then the entropy of $T$ with respect to $\mu$
        is $\sup_{\mathcal{P}} h_\mu(T, \mathcal{P})$, where the
        supremum ranges over all finite $\mathcal{P}$.
        For $x \in X$, $\mathcal{P}_n(x)$ is the atom
        of the finite refinement $\bigvee_{i=0}^n T^{-i}\mathcal{P}$
        containing $x$. 

    \begin{prop}\label{prop: entropy standard} We have the following: 
      \begin{enumerate}
      \item
        {\rm [Shannon-McMillan-Breiman Theorem.]}
        If $\mu$ is ergodic then for
        $\mu$-a.e. $x$ we have
        $$\lim_{n \to \infty} \frac{-\log(\mu(\mathcal{P}_n(x)))}{n} =
        h_\mu(T, \mathcal{P}).$$
      \item
        {\rm [Entropy and convex combinations.]}
        If $\mu = \int_{\Prob(X)^T} \nu \ d\theta$, for some probability
        measure $\theta$ on $\Prob(X)^T$, then
        $$
h_{\mu}(T, \mathcal{P}) = \int_{\Prob(X)^T}h_\nu(T, \mathcal{P}) \ d\theta.
$$
\item {\rm [Partitions with small boundary.]} Let $X$ be a locally compact, separable metrizable
  space.  Then
   $h_\mu(T) = \sup_{\mathcal{P} \in \mathrm{Part}_0}
  h_\mu(T, \mathcal{P})$, where $\mathrm{Part}_0$ denotes the
  finite
  partitions  of $X$ into sets $P_i$ satisfy $\mu(\partial P_i)=0$ for
  all $i$. 
        \end{enumerate}
      \end{prop}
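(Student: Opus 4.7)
All three parts of Proposition \ref{prop: entropy standard} are classical facts of ergodic theory, so my plan is to sketch the standard mechanism for each while reducing the details to textbook references rather than redoing well-known proofs.

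For (1), this is the Shannon--McMillan--Breiman theorem, established (in successive generality) by Shannon, McMillan and Breiman; see e.g.\ Walters' \emph{An Introduction to Ergodic Theory}, Thm.~4.9, or Petersen's \emph{Ergodic Theory}, Ch.~5. The idea is to write the information function $I_n(x) \df -\log \mu(\mathcal{P}_n(x))$ as a telescoping sum of conditional information quantities $I(\mathcal{P}\mid \bigvee_{j=1}^i T^{-j}\mathcal{P})(T^{-i}x)$, and then combine Birkhoff's ergodic theorem (applied to the conditional information on the largest refinement $\bigvee_{j=1}^\infty T^{-j}\mathcal{P}$) with Doob's martingale convergence theorem, controlling the tail with a Chacon--Ornstein-type maximal inequality. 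The resulting pointwise limit equals $h_\mu(T,\mathcal{P})$.

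For (2), the convex-combination formula is a direct consequence of (1). Given $\mu = \int \nu\, d\theta(\nu)$ with each $\nu$ ergodic and $T$-invariant, applying (1) to each ergodic $\nu$ gives $\lim_n -\frac{1}{n}\log\nu(\mathcal{P}_n(x)) = h_\nu(T,\mathcal{P})$ for $\nu$-a.e.\ $x$. Comparing $\mu(\mathcal{P}_n(x))$ with $\nu(\mathcal{P}_n(x))$ along a $\nu$-typical $x$ and using that the Radon--Nikodym derivative $d\nu/d\mu$ restricted to the tail $\sigma$-algebra is bounded on a set of full measure (by the martingale version of the ergodic decomposition), one sees that the Shannon--McMillan limit under $\mu$ coincides $\nu$-a.s.\ with the one under $\nu$. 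Integrating first over $\nu$ and then over $\theta$, and using dominated convergence (entropy being bounded by $\log |\mathcal{P}|$), gives the stated identity; a streamlined reference is Einsiedler--Ward, \emph{Ergodic Theory with a View Towards Number Theory}, Thm.~2.8.

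For (3), the plan is to approximate an arbitrary finite partition $\mathcal{P}$ by one $\mathcal{P}' \in \mathrm{Part}_0$ with $h_\mu(T,\mathcal{P}') \geq h_\mu(T,\mathcal{P}) - \vre$. Because $X$ is second countable and locally compact, it is covered by countably many open sets of small diameter, and for each center $x_0$ the spheres of positive $\mu$-measure about $x_0$ are countable; choosing radii outside this countable set, one builds cells with $\mu$-null boundaries and total diameter less than any prescribed $\delta$. The Rokhlin metric satisfies $d_R(\mathcal{P},\mathcal{P}') = H_\mu(\mathcal{P} \mid \mathcal{P}') + H_\mu(\mathcal{P}' \mid \mathcal{P})$, and the entropy functional $\mathcal{P}\mapsto h_\mu(T,\mathcal{P})$ is continuous in this metric (Walters, Thm.~4.12), so the approximation transfers to entropy. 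Taking the supremum over $\mathcal{P}' \in \mathrm{Part}_0$ then recovers $h_\mu(T)$. The only ``obstacle'' is bookkeeping: all three items are textbook, and the content of this proposition is to record them in the form in which they will be used in \S \ref{sec: zero entropy}.
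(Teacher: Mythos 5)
Your proposal matches the paper's own treatment: the paper does not prove the proposition, but cites standard textbooks (Glasner, Thms.~14.35 \& 15.12, and Einsiedler--Lindenstrauss--Ward for (1)--(2); the proof of Thm.~2.2 in Einsiedler--Lindenstrauss--Ward for (3)). Your references to Walters, Petersen, and Einsiedler--Ward are equally legitimate, and your sketches for (1) and (3) are the standard ones.

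Your sketch for (2), however, has a real gap. First, the statement of (2) does not assume that $\theta$-a.e.\ $\nu$ is ergodic, so it cannot simply be ``a direct consequence of (1).'' Second, and more seriously, when the decomposing measure $\theta$ is non-atomic, $\theta$-a.e.\ ergodic component $\nu$ is singular with respect to $\mu$ (and distinct components are mutually singular), so the Radon--Nikodym derivative $d\nu/d\mu$ you invoke does not exist, even after restricting to an invariant or tail $\sigma$-algebra; the claimed comparison between $\mu(\mathcal{P}_n(x))$ and $\nu(\mathcal{P}_n(x))$ therefore lacks justification. The standard proof of (2) avoids Shannon--McMillan--Breiman entirely: for a finite convex combination $\mu = \sum_{i=1}^{k} p_i \nu_i$ and any finite partition $\mathcal{Q}$, concavity of $t \mapsto -t\log t$ gives $H_\mu(\mathcal{Q}) \ge \sum_i p_i H_{\nu_i}(\mathcal{Q})$, while a complementary inequality gives $H_\mu(\mathcal{Q}) \le \sum_i p_i H_{\nu_i}(\mathcal{Q}) + H(p_1,\dots,p_k)$, with an additive error independent of $\mathcal{Q}$. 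Taking $\mathcal{Q} = \bigvee_{i=0}^{n-1} T^{-i}\mathcal{P}$, dividing by $n$, and letting $n \to \infty$ kills the error term, and a limiting argument (approximating $\theta$ by finitely supported measures) handles the general integral; see Walters, Thm.~8.4, or the Einsiedler--Lindenstrauss--Ward notes cited in the paper. The conclusion you state is of course correct and is in your references; only the heuristic you offer in support of it would not go through as written.
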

 For items (1) and (2) see
    e.g. \cite[Thms. 14.35 \& 15.12]{Glasner} or \cite[Chaps. 2 \&
    3]{ELW}. Item (3) is left as an exercise (see \cite[Pf. of Thm. 2.2]{ELW}).

      \begin{proof}[Proof of Theorem \ref{thm: rel entropy general}]
        We assume that the entropy $h = h_\mu(T)$ satisfies $h>0$,
and we will derive a contradiction. 
  Using Proposition \ref{prop: entropy standard}(3), we choose a partition $\mathcal{P}=\{P_i\}_{i=1}^k$ so that
$\mu(\partial P_i)=0$ for each $i$ and $h_\mu(T, \mathcal{P})>
\frac{h}{2}$.
Choose  $K$ compact so that $\mu(K)>\frac 3 4 $ and 
\begin{equation}\label{eq: exists because}\underset{t\to \infty}{\limsup} \,
  \mu\left(g_{t}(K)\right)>\frac 3 4.
  \end{equation}
A compact set with this property exists by the nondivergence
assumption \eqref{eq: moreover}. Let $N$ and $r_0$ be as in
Proposition \ref{prop: Besicovitch}, for this choice of $K$. 
Using the Shannon-McMillan-Breiman theorem, 
let $L_0$ be large enough so that for all $L > L_0$, the set
$$
W \df \left\{q:  \frac{-\log
      (\mu(P_L(q)))}{L} \leq  \frac{h}{2} \right\}
$$
satisfies
\begin{equation}\label{eq: will be reached}
\mu(W) <   \frac{1}{6N}. 
\end{equation}
Our goal will be choose some $L>L_0$  for which we have  a contradiction to \eqref{eq:
  will be reached}. 

Below we will simplify notation by writing $ B\left(q,
  \frac{1}{L^{5}} \right)$ as $B_{q,L}$ or simply as $B$. 
Let
$$
G_L \df \left\{ q : \mu \left( B_{q, L} \cap W
  \right) >
  \frac{\mu(B_{q, L})}{2} \right\}.
$$
We will show below that
\begin{equation}\label{eq: show a sequence}
  \text{  there are arbitrarily large $L$ for which
  }  \mu(K \cap G_{L}) > \frac{1}{3}.
  \end{equation}
We first explain why \eqref{eq: show a sequence} leads to a
contradiction with \eqref{eq:
  will be reached}. Let $L>L_0$ be large enough so that $\diam(B) \leq
\frac{2}{L^5} < r_0$ and $\mu(K \cap G_L) > \frac{1}{3}$,  
and let
$$
\mathcal{C} \df \left\{ B_{q, L} :  q \in G_{L}
\right\}.
$$
By Proposition \ref{prop: Besicovitch}, there is a subcollection
$\mathcal{F} \subset \mathcal{C}$, consisting of disjoint balls, so
that 
$$
\mu\left(K \cap G_L \cap \bigcup \mathcal{F} \right) \geq \frac{1}{N} \,
\mu(K \cap G_L) >
\frac{1}{3N}. 
$$
Then we have
$$
\mu(W) \geq \mu\left(W 
  \cap \bigcup \mathcal{F}
\right) = \sum_{B \in \mathcal{F}} \mu(W \cap 
B) > \sum_{B \in \mathcal{F}} \frac{\mu(B)}{2}  \geq \frac{\mu
\left(\bigcup \mathcal{F} \right)}{2}\geq \frac{1}{6N},
$$
where the equality follows from the disjointness of
$\mathcal{F}$ and the strict inequality follows from the definitions of
$G_L$ and $\mathcal{C}.$ This gives the desired contradiction to \eqref{eq: will be reached}.

It remains to show \eqref{eq: show a sequence}.
Choose $\vre>0$ so that 
\begin{equation}\label{eq:eps choice}
  21 \, \vre \, \log(k)<
  \frac{h
  }{2}.
\end{equation}
 Given any $L_1$ let
$L>L_1$ and let $X_0 = X_0(L) \subset K$ such that  $\mu(X_0) \geq 
\frac{1}{2},$ and so that for any $q \in X_0$ we have 
 \eqref{eq: they satisfy}. Such $L$ and $X_0$ exist by \eqref{eq:
   exists because}. Using Lemma \ref{lem: lemma 1} we can take  $L$
 large enough so that  
\begin{equation}\label{eq:poly decay} \mu\left(
X_1\right)>\frac  {99}{100}, \ \ \ \text{ where } 
X_1\df     \left\{q
      :\mu\left(B_{q,L} \right ) >k^{-\vre L} 
      \right \} ,
  \end{equation}
  and by making $L$ even larger we can assume that
  \begin{equation}\label{eq: even larger}
k^{-10 \vre L} < \frac{1}{2}. 
    \end{equation}

Now choose $r>0$ so that 
\begin{equation}\label{eq: taking long}
  \mu(V ) < \vre, \ \ \text{ where } \ V \df \left\{y: \dist\left(y,\bigcup_{i=1}^k \partial
      P_i\right)<r\right\} .\end{equation}
This is possible because $\mu \left( \bigcup_i \partial P_i
\right)=0. $

 We claim that 
\begin{equation}\label{eq:not V}
\mu(X_2) > \frac 2 5, \ \ \ \text{ where } X_2 \df 
 \left\{q \in X_0 :  
 |\{0\leq i \leq L: T^iq \in V\}|<10\vre L  \right\}.
\end{equation}
To see this, define
$$E \df \{q:|\{0\leq i \leq L: T^iq \in V\}|\geq 10\vre L\},$$
and let $\mathbf{1}_V$ denote the indicator function of $V$. 
Using \eqref{eq: taking long}, and since $\mu$ is $T$-invariant, 
$$\vre L  >L\mu(V)=\sum_{i=1}^L\int
\mathbf{1}_V\left(T^iq\right) d\mu\geq 10 \vre L \mu(E).$$
Dividing through by $10L \vre$ we have $\mu(E)<\frac{1}{10}$,
giving \eqref{eq:not V}.  

Let 
$$\beta \df k^{-20
\vre L}, \ \ \ \text{ and write } \ \ 
\mathcal{P}^{(L)} \df \bigvee_{i=1}^LT^{-i}(\mathcal{P}).$$
For each
$q$ we let $\mathcal{P}^{(L, B)}$ be the elements of
$\mathcal{P}^{(L)}$ which intersect $B=B_{q,L}$, and partition $\mathcal{P}^{(L,B)}$
into  two subcollections defined
by
$$
\mathcal{P}^{(q, L)}_{\mathrm{big}} \df \left\{P \in\mathcal{P}^{(L,B)}
  : \mu(P)
\geq \beta \mu(B) \right \} \ \ 
\text{ and } \ \ 
\mathcal{P}^{(q, L)}_{\mathrm{small}} \df \mathcal{P}^{(L,B)}
\sm \mathcal{P}^{(q, L)}_{\mathrm{big}}.
$$
We claim that if $q \in X_2$
then
\begin{equation}\label{eq: where beta comes in}
\mu \left(B \cap \bigcup \mathcal{P}^{(q,L)}_{\mathrm{big}}
\right)> \left(1-k^{-10 \vre L} \right) \, \mu \left(B\right) \stackrel{\eqref{eq: even larger}}{>} \frac{\mu(B)}{2}
. 
  \end{equation}
To see this, we 
note that for $q$ satisfying the conclusion of Proposition
\ref{prop: local growth},  the cardinality of $\mathcal{P}^{(L, B)}$ is at most
$k^{|\{0 \leq i \leq L \, : \, T^i q \in V\} |}$.
Indeed, for such $q$, whenever $T^i
q \notin V$, $T^i\left(B\right)$ is contained in one of the
$P_i$ (and for the other $i$ we use the obvious bound that  $T^iq \in
V$, $T^i\left(B \right)$ could  
intersect all of the $P_i$).
For $q \in X_2$ we also have that $\beta^{-1/2} \geq k^{|\{0 \leq i \leq L \, : \, T^i q \in V\} |}$,  and 
this implies that
$$
\mu\left(B \cap \bigcup \mathcal{P}^{(q,L)}_{\mathrm{small}} 
\right) <  \beta^{-1/2} \beta \mu(B) = k^{-10\vre L} \mu(B),
$$
and this proves \eqref{eq: where beta comes in}. 

 If  $q \in X_1 \cap
  X_2$ 
  and $q' \in B_{q, L} \cap \bigcup
\mathcal{P}^{(q,L)}_{\mathrm{big}} $ then we have
$$
\mu(P_L(q')) \geq \beta \mu(B_{q,L})
\geq  k^{-21 \vre L},
$$
and this implies via \eqref{eq:eps choice} that 
$q' \in  W.$ This and \eqref{eq: where beta comes in}  shows that $X_1 \cap X_2 \subset
G_L$.
 Thus
  \begin{equation}\label{eq: 11} \mu(G_L) \geq 
    \mu(X_1 \cap X_2) \geq \frac{2}{5} -
    \frac{1}{100} > \frac 1 3, 
    \end{equation}
and we have shown \eqref{eq: show a sequence}.
\end{proof}

\begin{proof}[Proof of Theorem \ref{thm: rel entropy}]
  Denote by $T$ the map defined by $\Rel_{z_0}$ (where
  defined). Since $m_{\cL}$ is $G$-invariant, it 
is rotation invariant, and thus $m_{\cL}$-a.e. $q$ has no horizontal
saddle connections. In particular for such $q$, $\Rel_z (q)$ is
defined for all $z \in Z$.

Assume first that $m_{\cL}$ is ergodic. By  $G$-invariance of $m_\cL$
we have  \eqref{eq: moreover}, so the 
  hypotheses of 
  Theorem \ref{thm: rel entropy general} are satisfied for $\mu =
  m_{\cL}$. 
  Now suppose $m_{\cL}$ is not ergodic, and let $\mu = \int_{\Prob(X)^T} \nu \ d\theta$ be the ergodic
decomposition of $\mu$, where $\theta$ is a
probability measure on $\Prob(X)^T$ such that $\theta$-a.e. $\nu$ is
ergodic for $T$. By Proposition \ref{prop: entropy standard}(2), it
suffices to show that the entropy of $\nu$ is zero for
$\theta$-a.e. $\nu$, and thus we only need to  show that assumption
\eqref{eq: moreover} holds for $\theta$-a.e. $\nu$. This follows from the
$g_t$-invariance of $m_{\cL}$. Indeed, by invariance and regularity of
$m_{\cL}$, for any $\vre>0$ there exists a compact  $K$, so
that for all $t$,   $m_{\cL}(g_t(K)) =m_{\cL}(K) >1-\vre^2$.
Thus for every $t$, 
$$\theta(\{\nu :g_{-t*}\nu(K) \geq 1-\vre\}) \geq 1-\vre.$$
Thus for any $\vre>0$ there is $K$ so that the set of $\nu$ for which
$(g_{-t_i})_*\nu(K) \geq 1-\vre$ for a sequence $t_i \to \infty$, has
$\theta$-measure at least $1-\vre$.  
Since $\vre$ was arbitrary, we have \eqref{eq: moreover} for $\theta$-a.e. $\nu$. 
\end{proof}

\end{document}